\theoremstyle{plain}
\newcommand{\cleqn}{\setcounter{equation}{0}}
\newcommand{\clth}{\setcounter{theorem}{0}}
\newcommand {\sectionnew}[1]{\section{#1}\cleqn\clth}
\newtheorem{theorem}{Theorem}[section]
\newtheorem{lemma}[theorem]{Lemma}
\newtheorem{definition-theorem}[theorem]{Definition-Theorem}
\newtheorem{proposition}[theorem]{Proposition}
\newtheorem{corollary}[theorem]{Corollary}
\newtheorem{definition}[theorem]{Definition}
\newtheorem{example}[theorem]{Example}
\newtheorem{remark}[theorem]{Remark}
\newtheorem{notation}[theorem]{Notation}
\newtheorem{assumption}[theorem]{Assumption}
\newtheorem{lemma-definition}[theorem]{Lemma-Definition}
\newtheorem{lemma-notation}[theorem]{Lemma-Notation}
\newtheorem{question}[theorem]{Question}
\newtheorem{remark-definition}[theorem]{Remark-Definition}
\newcommand \bth[1] { \begin{theorem}\label{t#1} }
\newcommand \ble[1] { \begin{lemma}\label{l#1} }
\newcommand \bpr[1] { \begin{proposition}\label{p#1} }
\newcommand \bco[1] { \begin{corollary}\label{c#1} }
\newcommand \bde[1] { \begin{definition}\label{d#1}\rm }
\newcommand \bex[1] { \begin{example}\label{e#1}\rm }
\newcommand \bre[1] { \begin{remark}\label{r#1}\rm }
\newcommand \bnota[1] {\begin{notation}\label{n#1}\rm }
\newcommand \bas[1] { \begin{assumption}\label{a#1}\rm }
\newcommand \bqu[1] { \begin{question}\label{q#1}\rm }
\newcommand {\ele} { \end{lemma} }
\newcommand {\epr} { \end{proposition} }
\newcommand {\eco} { \end{corollary} }
\newcommand {\ede} { \end{definition} }
\newcommand {\eex} { \end{example} }
\newcommand {\ere} { \end{remark} }
\newcommand {\enota} { \end{notation} }
\newcommand {\eas} {\end{assumption}}
\newcommand {\equ} {\end{question}}
\newcommand \lb[1]{\label{#1}}
\newcommand{\beqa}{\begin{eqnarray*}}                     %added by Lu
\newcommand{\eeqa}{\end{eqnarray*}}
\def \sC {{\scriptscriptstyle C}}
\def \wF_mn {\wF_m \times \wF_n}
\def \wF_mnC {\wF_{m, n, \, \sC}}
\def \wF {\widetilde{F}}
\newcommand\blfootnote[1]{%
  \begingroup
  \renewcommand\thefootnote{}\footnote{#1}%
  \addtocounter{footnote}{-1}%
  \endgroup
}
\begin{document}

\setlength{\baselineskip}{1.2\baselineskip}
%%%%%%%%%%%%%%%%%%%%%%%%%%%%%%%%%%%%%%%%%%%%%%%%%%%%%%%%%%%%%%%%%%%%%%%%%%%
%%%%%%%%%%%%%%%%%%%%%%    Title    %%%%%%%%%%%%%%%%%%%%%%%%%%%%%%%%%%%%%%%%
\title[Classification of Transposed Poisson 3-Lie algebras of dimension 3]
{Classification of Transposed Poisson 3-Lie algebras of dimension 3}
\author{Yaxi Jiang}
\address{
School of Mathematical Sciences    \\
Zhejiang Normal University\\
Jinhua 321004              \\
China}
\email{jiangyx@zjnu.edu.cn}

\author{Chuangchuang Kang}
\address{
School of Mathematical Sciences    \\
Zhejiang Normal University\\
Jinhua 321004              \\
China}
\email{kangcc@zjnu.edu.cn}

\author{Jiafeng L\"u}
\address{
School of Mathematical Sciences    \\
Zhejiang Normal University\\
Jinhua 321004              \\
China}
\email{jiafenglv@zjnu.edu.cn}

\blfootnote{*Corresponding Author: Chuangchuang Kang. Email: kangcc@zjnu.edu.cn.}

\date{}
\begin{abstract}
Transposed Poisson $3$-Lie algebra is a dual notion of Nambu-Poisson algebra of order 3. In this paper, we explicitly determine all $\frac{1}{3}$-derivations and automorphisms of the unique nontrivial $3$-dimensional complex $3$-Lie algebra $(A_3,[\cdot,\cdot,\cdot])$. Based on the one-one correspondence between $\frac{1}{3}$-derivations and transposed Poisson 3-Lie algebras, up to isomorphism, we classify transposed Poisson $3$-Lie algebras of dimension $3$ under the case that $L_{e_1}$ is  trivial over the complex field $\mathbb{C}$.
\end{abstract}

\subjclass[2010]{
17A36, %Automorphisms, derivations, other operators
17A30, %Algebras satisfying other identities
17A40, %Ternary compositions
17A42, %Other n-ary compositions (n ≥ 3)
17B63. %Poisson algebras
}
\keywords{Transposed Poisson $3$-Lie algebras, Transposed Poisson algebras, $\frac{1}{3}$-derivations, $\delta$-derivations}

\maketitle
%\tableofcontents
%%%%%%%%%%%%%%%%%%%%   Introduction   %%%%%%%%%%%%%%%%%%%%%%%%%%%%%%%%%%%%%%%%
\allowdisplaybreaks
\sectionnew{Introduction }\lb{intro}

In \rm{\cite{Filippov}}, Filippov introduced the concept of $n$-Lie algebras, which is different from Lie algebras \rm{\cite{Kasymov}} and plays an important role in mathematical fields and string theory \rm{\cite{$n$-ary,Bagger,Bagger2}. An $n$-Lie algebra is a vector space $A$ with a linear multiplication (we call it the $n$-Lie bracket) $[\cdot,\cdots,\cdot]: \wedge^n A\rightarrow A$ satisfying the Filippov-Jacobi identity:
\begin{equation*}
  [x_1,\cdots,x_{n-1},[y_1,\cdots,y_n]]=\sum\limits_{i=1}^n[y_1,\cdots,y_{i-1},[x_1,\cdots,x_{n-1},y_i],y_{i+1},\cdots,y_n],
\end{equation*}
where for all $x_1,\cdots,x_{n-1},y_1,\cdots,y_{n} \in A$.
A Poisson $n$-Lie algebra \cite{Cantarini} is a commutative associative algebra $(A,\cdot)$ equipped with an $n$-Lie bracket, satisfying the generalized Leibniz rule:
\begin{equation*}
[x_1,\cdots,x_i \cdot \tilde{x},\cdots,x_n]= x_i \cdot [x_1,\cdots,\tilde{x},\cdots,x_n] + \tilde{x} \cdot [x_1,\cdots,x_i ,\cdots,x_n],
\end{equation*}
where for all $x_1,\cdots,x_n,\tilde{x} \in A$. Poisson $n$-Lie algebras (also called Nambu-Poisson algebras) appeared from the study of Nambu mechanics \cite{Nambu} and provided an useful method to describe algebraical and geometrical structures of Nambu-Poisson manifolds \cite{Takhtajan}. Cantarini and Kac classified all complex linearly compact Poisson $n$-Lie algebras in \rm{\cite{Cantarini}}.

In recent years, much work has been done in transposed Poisson algebras (\rm{\cite{BaiC,Beites,Ouaridi,Kaygorodov-1,Kaygorodov-2,Kaygorodov-3,Kaygorodov-4,Ma,Yuan}}). The transposed Poisson algebra can be generalized to the transposed Poisson $n$-Lie algebra, which consists of a commutative algebra $(A,\cdot)$ and an $n$-Lie bracket with $n \geq 2$ satisfying:
\begin{equation*}
n \tilde{x} \cdot [x_1,\cdots,x_i,\cdots,x_n]=\sum_{i=1}^n [x_1,\cdots, \tilde{x} \cdot x_i,\cdots,x_n],
\end{equation*}
where for all $x_1,\cdots,x_n,\tilde{x} \in A$. The difference between transposed Poisson $n$-Lie algebras and Poisson $n$-Lie algebras lies in the different positions of their multiplications and $n$-Lie bracket. In the special case $n=3$, transposed Poisson $3$-Lie algebras can be deduced not only from $3$-Lie algebras and Poisson algebras, but also from $3$-Lie algebras and transposed Poisson algebras \cite{BaiC}.

Beites, Ouaridi and Kaygorodov separated all non-isomorphic cases of $3$-dimensional complex Lie algebras and $3$-dimensional complex transposed Poisson algebras in \rm{\cite{Beites}}.
In \cite{Beites-2}, Beites, Ferreira and Kaygorodov proposed the following question:
\begin{question} {\rm(\cite{Beites-2}, Question 38)}
How to give examples and classify simple transposed Poisson $n$-Lie algebras?
\end{question}
\noindent Ferreira, Kaygorodov and Lopatkin showed that there are no non-trivial transposed Poisson $n$-Lie algebra structures defined on a complex semisimple finite-dimensional $n$-Lie algebra \cite{Ferreira}. For the non-semisimple finite-dimensional $n$-Lie algebras, transposed Poisson $n$-Lie algebras can be constructed by $\frac{1}{n}$-derivations of $n$-Lie algebras. $\frac{1}{n}$-derivations are the special cases of $\delta$-derivations \cite{Kaygorodov-5}, which are specific types of generalized derivations \rm{\cite{Beites1,Leger}} introduced by Filippov \rm{\cite{Filippov-d1,Filippov-d2,Filippov-d3}}. In \rm{\cite{Filippov}}, Filippov classified the $(n+1)$-dimensional $n$-Lie algebras over an algebraically closed field of characteristic zero. In \rm{\cite{BaiR}}, Bai, Song and Zhang gave the complete classification of $(n+2)$-dimensional $n$-Lie algebras by isomorphism. However, it is complicated to classify $n$-dimensional $(n\geq 3)$ transposed Poisson $n$-Lie algebras. Our aim is to separate all non-isomorphic cases of $3$-dimensional transposed Poisson $3$-Lie algebras under some certain conditions.
%The above question motivates us to classify transposed Poisson $3$-Lie algebras.
%n-Lie alg的分类
%In \rm{\cite{Filippov}}, Filippov classified the $(n+1)$-dimensional $n$-Lie algebras over an algebraically closed field of characteristic zero. In \rm{\cite{BaiR}}, Bai, Song and Zhang gave the complete classification of $(n+2)$-dimensional $n$-Lie algebras by isomorphism.
%Nambu \rm{\cite{Nambu}} proposed the generalization of the classical Hamiltonian dynamics to a $3$-dimensional phase space, firstly extended the Poisson bracket to the $3$-ary multilinear operation, i.e., $n=3$ case.

%TPA的分类
%Their classification strategies are the following two steps. $(1)$ Find all automorphisms of Lie algebra $(A,[\cdot,\cdot])$. $(2)$ Consider the multiplication table of commutative associative algebra $(A,\cdot)$ under the action of elements from automorphisms of Lie algebra.
There exists a unique nontrivial $3$-dimensional complex $3$-Lie algebra $(A_3,[\cdot,\cdot,\cdot])$ (see Proposition \ref{3-dim}).
%We can describe all 3-dimensional transposed Poisson $3$-Lie algebras by the following way: obtain the classification of all $\frac{1}{3}$-derivations of 3-dimensional $3$-Lie algebras and construct all possible transposed Poisson structures on these algebras. Thus, $\frac{1}{3}$-derivation of $3$-Lie algebra, which is associated with automorphism of $3$-Lie algebra, plays an extremely important role in the classification process.
In order to classify the non-isomorphic $3$-dimensional transposed Poisson $3$-Lie algebras, we define the automorphisms of transposed Poisson $3$-Lie algebras that are both the automorphisms of commutative algebras and the automorphisms of $3$-Lie algebras. Our classification strategies are the following three steps. $(1)$ Compute all the $\frac{1}{3}$-derivations and automorphisms on $(A_3,[\cdot,\cdot,\cdot])$. $(2)$ Give the multiplications of $3$-dimensional transposed Poisson $3$-Lie algebra derived by $\frac{1}{3}$-derivations of $(A_3,[\cdot,\cdot,\cdot])$. $(3)$ By using the automorphisms of $(A_3,[\cdot,\cdot,\cdot])$ to act on the commutative algebra, we classify the non-isomorphic $3$-dimensional transposed Poisson $3$-Lie algebra under certain conditions.

%More details are as follows.
%Firstly, we fix the non-zero multiplication table on the basis of $3$-dimensional $3$-Lie algebra $A_3$ in Proposition \ref{3-dim}. Secondly, we compute all the $\frac{1}{3}$-derivations on the class of $A_3$ in Proposition \ref{vd-beta}. Thirdly, by the results we got in the step two and Proposition \ref{comm}, the commutative multiplication results of transposed Poisson $3$-Lie algebra on the basis can be obtained in Proposition \ref{com-multi}. Fourthly, under the action of an automorphism of $A_3$, given in Proposition \ref{phi-lamdae}, we rewrite the multiplication table of $(A_3,\cdot)$. Then, we separate the multiplications into different isomorphic classes under some certain conditions. Finally, we get all the classification results in Theorem \ref{classification}. Furthermore, we give the conditions when the commutative multiplication of transposed Poisson $3$-Lie algebra $(A_3,\cdot,[\cdot,\cdot,\cdot])$ is associative in Remark \ref{asso}. See Section 4 for details of the proof process. In order to classify $n$-dimensional transposed Poisson $3$-Lie algebra, we also give the necessary and sufficient conditions about $\frac{1}{3}$-derivation and automorphism on the $n$-dimensional transposed Poisson $3$-Lie algebra in Proposition \ref{n-13der} and \ref{n-auto}.

The paper is organized as follows. In Section 2, we introduce some preliminaries. In Section 3, we classify complex transposed Poisson $3$-Lie algebras under the case that $L_{e_1}$ is  trivial. First, we provide the necessary and sufficient conditions of $\frac{1}{3}$-derivations and automorphisms on $n$-dimensional $3$-Lie algebra (Propositions \ref{n-13der} and \ref{n-auto}). Second, we give all the $\frac{1}{3}$-derivations and automorphisms of $(A_3,[\cdot,\cdot,\cdot])$ (Propositions \ref{vd-beta} and \ref{phi-lamdae}). Third, we describe the multiplications of $3$-dimensional transposed Poisson $3$-Lie algebra derived by $\frac{1}{3}$-derivations of $(A_3,[\cdot,\cdot,\cdot])$ (Proposition \ref{com-multi}).
%Under the action of an automorphism of $A_3$, given in Proposition \ref{phi-lamdae}, we rewrite the multiplication table of $(A_3,\cdot)$.
Finally, we get the explicit classification of $3$-dimensional complex transposed Poisson $3$-Lie algebra under the case that $L_{e_1}$ is  trivial (Theorem \ref{classification}). In Section 4, we give the proof of Theorem \ref{classification}.

Throughout this paper, unless otherwise specified, all the vector spaces and algebras are finite-dimensional over an algebraically closed field $\mathbb{F}$ of characteristic zero, $\mathbb{C}$ is the complex field.

\section{Preliminaries}

%\begin{definition}
%Let $L$ be a vector space equipped with two nonzero bilinear operations $-\cdot-$ and $[\cdot,\cdot]$. The triple $(L,\cdot,[\cdot,\cdot])$ is called a \textbf{transposed Poisson algebra} if $(L,\cdot)$ is a commutative associative algebra and $(L,[\cdot,\cdot])$ is a Lie algebra that satisfies the following compatibility condition
%\begin{equation}\label{eq:tp-alg}
%  2z\cdot[x,y]=[z\cdot x,y]+[x,z\cdot y], \quad \forall~ x,y,z\in L.
%\end{equation}
%\end{definition}

%\begin{definition}
%%tp alg 的导子,既是交换代数的又是Lie代数的导子
%
%
%
%
%\end{definition}

In this section, we give some preliminaries and basic results on $3$-Lie algebras, transposed Poisson $3$-Lie algebras, and $\frac{1}{3}$-derivations.

\begin{definition}\rm{(\cite{Filippov})}
A \textbf{$3$-Lie algebra} is a vector space $A$ with a 3-ary multi-linear map ($3$-Lie bracket) $[\cdot,\cdot,\cdot]: \wedge^3 A \rightarrow A$ such that the following \textbf{Fundamental Identity} holds:
\begin{equation*}
[[x,y,z],u,v]=[[x,u,v],y,z]+[[y,u,v],z,x]+[[z,u,v],x,y], \quad \forall~ x,y,z,u,v\in A.
\end{equation*}
\end{definition}

\begin{definition} \rm{(\cite{BaiC})}
Let $(A,\cdot)$ be a commutative algebra and $(A,[\cdot,\cdot,\cdot])$ be a $3$-Lie algebra.
The triple $(A,\cdot,[\cdot,\cdot,\cdot])$ is called a \textbf{transposed Poisson $3$-Lie algebra} if it satisfies the following condition:
\begin{equation} \label{eq:TP3}
3u \cdot [x,y,z]=[u\cdot x,y,z]+[x,u\cdot y,z]+[x,y,u\cdot z], \quad \forall~ x,y,z,u\in A.
\end{equation}
\end{definition}

%\begin{pro}
%pro4.7
%
%
%
%
%
%\end{pro}

\begin{definition} \rm{(\cite{Ferreira})} \label{3-deriv}
Let $(A,[\cdot,\cdot,\cdot])$ be a ternary algebra with the multiplication $[\cdot,\cdot,\cdot]$ and $\varphi$ be a linear map. Then $\varphi$ is a \textbf{$\frac{1}{3}$-derivation} if it satisfies
\begin{equation}\label{eq:var-d}
  \varphi[x,y,z]=\frac{1}{3}([\varphi(x),y,z]+[x,\varphi(y),z]+[x,y,\varphi(z)]), \quad \forall~ x,y,z\in A.
\end{equation}
\end{definition}

\begin{proposition} \rm{(\cite{Ferreira})} \label{comm}
Let $(A,\cdot)$ be a commutative algebra, $(A, [\cdot,\cdot,\cdot])$ be a  $3$-Lie algebra, and for all $x,y\in A$, $L_x: A \rightarrow A$ given by $L_x(y) = x \cdot y$ be a left multiplication of $(A,\cdot)$. Then $L_x$ is a $\frac{1}{3}$-derivation of $(A, [\cdot,\cdot,\cdot])$ if and only if $(A,\cdot, [\cdot,\cdot,\cdot])$ is a transposed Poisson $3$-Lie algebra.
\end{proposition}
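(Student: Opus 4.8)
The plan is to observe that the statement is simply a dictionary between the two defining identities, once the left multiplication operator is written out explicitly. Fix $u \in A$. Since $\cdot$ is bilinear, $L_u$ is a linear map, so Definition \ref{3-deriv} applies to it, and by \eqref{eq:var-d} the condition that $L_u$ be a $\frac{1}{3}$-derivation of $(A,[\cdot,\cdot,\cdot])$ reads
\begin{equation*}
L_u[x,y,z] = \frac{1}{3}\bigl([L_u(x),y,z] + [x,L_u(y),z] + [x,y,L_u(z)]\bigr), \qquad \forall~ x,y,z \in A.
\end{equation*}
Substituting $L_u(w) = u \cdot w$ throughout and clearing the factor $\frac{1}{3}$, this is equivalent to
\begin{equation*}
3\, u \cdot [x,y,z] = [u\cdot x,y,z] + [x,u\cdot y,z] + [x,y,u\cdot z], \qquad \forall~ x,y,z \in A,
\end{equation*}
which is exactly identity \eqref{eq:TP3} for this fixed $u$.

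First I would treat the forward direction: assume $L_x$ is a $\frac{1}{3}$-derivation for every $x \in A$. Then for each fixed $u$ the displayed identity holds for all $x,y,z$; letting $u$ range over $A$ recovers \eqref{eq:TP3} in full, so $(A,\cdot,[\cdot,\cdot,\cdot])$ is a transposed Poisson $3$-Lie algebra. Conversely, if $(A,\cdot,[\cdot,\cdot,\cdot])$ is a transposed Poisson $3$-Lie algebra, then \eqref{eq:TP3} holds for all $x,y,z,u$; reading it with $u$ fixed and dividing by $3$ yields precisely the $\frac{1}{3}$-derivation identity for $L_u$, and since $u$ was arbitrary, every left multiplication is a $\frac{1}{3}$-derivation.

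The only points that need attention are formal: that $L_u$ is a bona fide linear map (so Definition \ref{3-deriv} is applicable), and the quantifier bookkeeping, namely that ``$L_x$ is a $\frac{1}{3}$-derivation for all $x \in A$'' must be matched against the universal quantifier on $u$ in \eqref{eq:TP3}. I expect no genuine obstacle here: the proposition is a reformulation of \eqref{eq:TP3}, and the proof reduces to expanding the definition of left multiplication and comparing the two identities term by term.
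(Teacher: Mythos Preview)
Your proof is correct; the equivalence is indeed a direct unwinding of the definitions, and your handling of linearity and quantifiers is fine. Note that the paper does not actually prove this proposition: it is stated as a citation from \cite{Ferreira} with no argument given, so there is nothing to compare against beyond observing that your write-up is the natural (and essentially only) way to verify such a definitional equivalence.
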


\begin{definition} \label{isomorphism}
Let $(A_1,\cdot_1,[\cdot,\cdot,\cdot]_1)$ and $(A_2,\cdot_2,[\cdot,\cdot,\cdot]_2)$ be two transposed Poisson $3$-Lie algebras. A \textbf{homomorphism} of transposed Poisson $3$-Lie algebra is a linear map $\varphi:A_1 \rightarrow A_2$ such that
\begin{equation} \label{eq:f-iso}
  \varphi([x,y,z]_1) = [\varphi(x),\varphi(y),\varphi(z)]_2,
\end{equation}
\begin{equation} \label{eq:y-iso}
  \varphi(x \cdot_1 y) =\varphi(x) \cdot_2 \varphi(y), \quad \forall~ x,y,z\in A_1.
\end{equation}
An \textbf{isomorphism} of transposed Poisson $3$-Lie algebras is an invertible homomorphism. Two transposed Poisson $3$-Lie algebras are said to be \textbf{isomorphic} if there exists an isomorphism between them. In particular, if $\varphi$ is an isomorphism and $(A_1,\cdot_1,[\cdot,\cdot,\cdot]_1)=(A_2,\cdot_2,[\cdot,\cdot,\cdot]_2)$, then $\varphi$ is called an \textbf{automorphism} of transposed Poisson $3$-Lie algebra. If $(A_1,\cdot_1,[\cdot,\cdot,\cdot]_1)$ and $(A_2,\cdot_2,[\cdot,\cdot,\cdot]_2)$ with trivial commutative algebra structure, then $\varphi$ is an isomorphism of $3$-Lie algebra, in particular, if $(A_1,[\cdot,\cdot,\cdot]_1)=(A_2,[\cdot,\cdot,\cdot]_2)$, then $\varphi$ is called an automorphism of $3$-Lie algebra.
\end{definition}

\begin{proposition}\rm{(\cite{BaiR})} \label{3-dim}
There exists a unique nontrivial $3$-dimensional complex $3$-Lie algebra. It has a basis $\left\{e_{1}, e_{2}, e_{3}\right\}$ with respect to which the nonzero product is given by
\begin{equation}\label{eq:basis}
  [e_1,e_2,e_3]=e_1.
\end{equation}
We denote this $3$-Lie algebra by $(A_3,[\cdot,\cdot,\cdot])$.
\end{proposition}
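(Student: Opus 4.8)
The plan is to classify all skew-symmetric ternary brackets on a $3$-dimensional space $A$ over $\mathbb{C}$ that satisfy the Fundamental Identity, up to isomorphism. First I would observe that on a $3$-dimensional space the bracket $[\cdot,\cdot,\cdot]:\wedge^3 A\to A$ is determined by a single vector, namely the image $[e_1,e_2,e_3]$ of a fixed basis $\{e_1,e_2,e_3\}$; write $[e_1,e_2,e_3]=a_1e_1+a_2e_2+a_3e_3$. If this vector is zero the algebra is abelian (the trivial case), so assume it is nonzero. The key reduction is to exploit the freedom of changing basis: since $\wedge^3 A$ is $1$-dimensional, a linear change of basis $g\in GL(A)$ rescales $[e_1,e_2,e_3]$ by $\det(g)$ and transports the image vector by $g$ itself, so two nonzero brackets are isomorphic precisely when their defining vectors lie in the same $GL(A)$-orbit up to this determinant twist — and since $GL(A)$ acts transitively on nonzero vectors while $\det$ can be adjusted independently, we may normalize $[e_1,e_2,e_3]=e_1$ after a suitable change of basis. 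This gives existence and uniqueness of the nontrivial candidate; what remains is to check that the bracket \eqref{eq:basis} actually satisfies the Fundamental Identity.

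For the verification step I would note that the Fundamental Identity is multilinear and skew in the appropriate slots, so it suffices to check it on basis elements. With the bracket $[e_1,e_2,e_3]=e_1$ and all other basic brackets zero, almost every instance of
\[
[[x,y,z],u,v]=[[x,u,v],y,z]+[[y,u,v],z,x]+[[z,u,v],x,y]
\]
has both sides zero for dimension reasons: any bracket involving a repeated basis vector vanishes, and in dimension $3$ most choices of the five arguments $x,y,z,u,v$ from $\{e_1,e_2,e_3\}$ force a repetition inside some triple bracket on each side. The only potentially nonzero terms arise when $\{x,y,z\}$ or one of the inner triples on the right equals $\{e_1,e_2,e_3\}$; a short case check (using $[e_1,e_1,\cdot]=0$, etc.) shows the surviving terms cancel or match. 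I would record this as a one-paragraph computation rather than an exhaustive table.

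The main obstacle — really the only nontrivial point — is making the orbit/normalization argument airtight: one must be careful that rescaling by $\det(g)$ together with the transport by $g$ does not introduce extra isomorphism classes (e.g. one should confirm that once the image vector is normalized to lie along $e_1$, no residual parameter survives, which follows because the stabilizer of the line $\mathbb{C}e_1$ in $GL(A)$ still acts with enough determinants to kill any scalar). Everything else is a finite, low-dimensional check. An alternative, perhaps cleaner, route is simply to cite \cite{Filippov}, where $(n+1)$-dimensional $n$-Lie algebras are classified over an algebraically closed field of characteristic zero: the $n=3$ case of that classification yields exactly the statement here, and indeed the proposition as written already attributes the result to \cite{BaiR}.
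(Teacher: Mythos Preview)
The paper does not supply a proof of this proposition at all: it is stated with a citation to \cite{BaiR} (and implicitly \cite{Filippov}) and then used as a black box. So there is no ``paper's own proof'' to compare against; your proposal is an independent argument for a result the authors simply import.

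That said, your argument is sound. The reduction step is the heart of it, and your description is correct once made precise: given a nonzero $v=[e_1,e_2,e_3]$, you seek $g\in GL(A)$ with $g(v)=\det(g)\,e_1$, and the stabilizer of the line $\mathbb{C}e_1$ has enough freedom in its determinant to arrange this (pick $g_0$ sending $v$ to $e_1$, then compose with a diagonal $h=\mathrm{diag}(\lambda,\mu,\nu)$ and solve $\mu\nu=1/\det(g_0)$). The verification of the Fundamental Identity is also fine; in dimension $3$ the only instance with a nonzero side is, up to symmetry, $\{x,y,z\}=\{e_1,e_2,e_3\}$ with $\{u,v\}=\{e_2,e_3\}$, where both sides equal $e_1$. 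Your closing remark that one could alternatively cite the $(n{+}1)$-dimensional classification in \cite{Filippov} or \cite{BaiR} is exactly what the paper does.
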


\section{Classification of $3$-dimensional transposed Poisson $3$-Lie algebras}

In this section, by the $\frac{1}{3}$-derivation and automorphism of $(A_3,[\cdot,\cdot,\cdot])$, we give the classification of $3$-dimensional transposed Poisson $3$-Lie algebra in the isomorphic sense under the case $L_{e_1}$ is  trivial.
%by the structure constants of $n$-dimensional $3$-Lie algebras, we obtain the sufficient and necessary conditions of $\frac{1}{3}$-derivation (Proposition \ref{n-13der}) and automorphism (Proposition \ref{n-auto}). Then, through the classification result of $3$-dimensional complex $3$-Lie algebra $A_3$ given in Proposition \ref{3-dim}, we obtain the $\frac{1}{3}$-derivation (Proposition \ref{vd-beta}) and automorphism (Proposition \ref{phi-lamdae}) on $A_3$. Besides, according to the relation between the $\frac{1}{3}$-derivation of $A_3$ and the left multiplication of $(A_3,\cdot)$ of $3$-dimensional transposed Poisson $3$-Lie algebra, we have the commutative multiplication table on $(A_3,\cdot,[\cdot,\cdot,\cdot])$ (Lemma \ref{com-multi}). Last but not least, Theorem \ref{classification} gives the classification results of commutative $(A_3,\cdot,[\cdot,\cdot,\cdot])$ in the isomorphic sense under some certain conditions.

\begin{proposition} \label{n-13der}
Let $(A,[\cdot,\cdot,\cdot])$ be an $n$-dimensional $3$-Lie algebra with a basis $\left\{e_{1}, e_{2}, \cdots, e_{n}\right\}$, $\varphi:A \rightarrow A$ be a linear map. Suppose
\begin{equation} \label{eq:eiejek}
  [e_i,e_j,e_k]=\sum_{s=1}^{n} C_{ijk}^s e_s,\quad \forall~ C_{ijk}^s\in \mathbb{C},~ 1 \leq i,j,k \leq n,
\end{equation}
\begin{equation} \label{eq:varei}
\varphi(e_i)=\sum_{j=1}^{n} \beta_{ij} e_j,\quad \forall~ \beta_{ij}\in \mathbb{C},~ 1 \leq i \leq n.
\end{equation}
Then $\varphi$ is a $\frac{1}{3}$-derivation if and only if the structure constants $C_{ijk}^s$ and $\beta_{ij}$ satisfy
\begin{equation} \label{eq:Cbeta}
\sum_{s=1}^n (C_{sjk}^t \beta_{is} + C_{isk}^t \beta_{js} + C_{ijs}^t \beta_{ks} -3C_{ijk}^s \beta_{st}) = 0,\quad 1 \leq t \leq n.
\end{equation}
\end{proposition}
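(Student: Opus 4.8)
The plan is to expand the defining identity \eqref{eq:var-d} for a $\frac13$-derivation by plugging in basis elements $e_i, e_j, e_k$ and comparing coefficients of each basis vector $e_t$. First I would write $\varphi[e_i,e_j,e_k]$ two ways. On the one hand, using \eqref{eq:eiejek} and the linearity of $\varphi$ together with \eqref{eq:varei},
\[
\varphi[e_i,e_j,e_k]=\varphi\Bigl(\sum_{s=1}^n C_{ijk}^s e_s\Bigr)=\sum_{s=1}^n C_{ijk}^s \varphi(e_s)=\sum_{s=1}^n\sum_{t=1}^n C_{ijk}^s \beta_{st} e_t .
\]
On the other hand, expanding the right-hand side of \eqref{eq:var-d} and using \eqref{eq:varei} to replace $\varphi(e_i)=\sum_s\beta_{is}e_s$ (and similarly for $e_j$, $e_k$), then \eqref{eq:eiejek} to expand each bracket,
\[
\frac13\bigl([\varphi(e_i),e_j,e_k]+[e_i,\varphi(e_j),e_k]+[e_i,e_j,\varphi(e_k)]\bigr)
=\frac13\sum_{s=1}^n\sum_{t=1}^n\bigl(\beta_{is}C_{sjk}^t+\beta_{js}C_{isk}^t+\beta_{ks}C_{ijs}^t\bigr)e_t .
\]

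Next I would equate the coefficient of $e_t$ in these two expressions for each fixed $i,j,k,t$. Multiplying through by $3$ and moving everything to one side gives exactly
\[
\sum_{s=1}^n\bigl(C_{sjk}^t\beta_{is}+C_{isk}^t\beta_{js}+C_{ijs}^t\beta_{ks}-3C_{ijk}^s\beta_{st}\bigr)=0 ,
\]
which is \eqref{eq:Cbeta}. Conversely, since $\{e_i\otimes e_j\otimes e_k\}$ spans $A\otimes A\otimes A$ and both sides of \eqref{eq:var-d} are linear (the left side in $\varphi$ and trilinear in the arguments, the right side likewise), validity of \eqref{eq:Cbeta} for all $i,j,k,t$ implies \eqref{eq:var-d} holds on all of $A\times A\times A$ by linearity. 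So the equivalence is really just "\eqref{eq:var-d} on a basis $\Leftrightarrow$ \eqref{eq:var-d} everywhere," combined with the coefficient comparison.

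There is no serious obstacle here; the proposition is a direct unwinding of definitions. The only point requiring a little care is bookkeeping of indices: one must keep the summation variable $s$ in the structure constants separate from the free index $t$ labelling the target basis vector, and make sure the skew-symmetry of the bracket is not secretly invoked (it is not needed — the computation works for any ternary multiplication, though here $[\cdot,\cdot,\cdot]$ happens to be a $3$-Lie bracket). I would also remark that one does not need to impose \eqref{eq:Cbeta} for all ordered triples $(i,j,k)$ if the bracket is skew-symmetric — it suffices to check $i<j<k$ — but stating it for all $1\le i,j,k\le n$ is harmless and matches the form used later for $A_3$.
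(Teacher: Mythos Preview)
Your proposal is correct and follows essentially the same approach as the paper: both expand the $\tfrac13$-derivation identity on basis triples $(e_i,e_j,e_k)$ using \eqref{eq:eiejek} and \eqref{eq:varei}, then compare coefficients of $e_t$ to obtain \eqref{eq:Cbeta}. Your additional remarks on linearity for the converse and on skew-symmetry are accurate but not present in the paper's proof, which simply records the coefficient computation.
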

\begin{proof}
By \eqref{eq:eiejek} and \eqref{eq:varei}, for all $ 1 \leq i,j,k \leq n$, we have
\begin{eqnarray*}
  &&\frac{1}{3}([\varphi(e_i),e_j,e_k]+[e_i,\varphi(e_j),e_k]+[e_i,e_j,\varphi(e_k)])
  -\varphi([e_i,e_j,e_k])   \\
  &=& \frac{1}{3}([\sum_{p=1}^n \beta_{ip} e_p , e_j, e_k]+[e_i, \sum_{p=1}^n \beta_{jp} e_p,e_k]+[e_i,e_j,\sum_{p=1}^n \beta_{kp} e_p])-\varphi([e_i,e_j,e_k]) \\
  &=& \frac{1}{3}(\sum_{s=1}^n \beta_{is} \sum_{t=1}^n C_{sjk}^t e_t +\sum_{s=1}^n
  \beta_{js} \sum_{t=1}^n C_{isk}^t e_t + \sum_{s=1}^n \beta_{ks} \sum_{t=1}^n C_{ijs}^t e_t) - \varphi(\sum_{s=1}^n C_{ijk}^s e_s) \\
  &=& \sum_{t=1}^n (\frac{1}{3}\sum_{s=1}^n (\beta_{is} C_{sjk}^t+\beta_{js} C_{isk}^t +\beta_{ks} C_{ijs}^t) - \sum_{s=1}^n C_{ijk}^s \beta_{st} ) e_t.
\end{eqnarray*}
Then $\varphi$ is a $\frac{1}{3}$-derivation if and only if \eqref{eq:Cbeta} holds.
\end{proof}

\begin{proposition} \label{n-auto}
Let $(A,[\cdot,\cdot,\cdot])$ be an $n$-dimensional $3$-Lie algebra with a basis $\left\{e_{1}, e_{2}, \cdots, e_{n}\right\}$. If the $3$-Lie bracket $[\cdot,\cdot,\cdot]$ is given by \eqref{eq:eiejek} and a linear map $\phi:A \rightarrow A$ satisfies
\begin{equation}\label{eq:phi-ei}
  \phi(e_i)=\sum_{j=1}^{n} \lambda_{ij} e_j,\quad \forall~ \lambda_{ij}\in \mathbb{C}, ~ 1 \leq i \leq n,
\end{equation}
then $\phi$ is an automorphism of $(A,[\cdot,\cdot,\cdot])$ if and only if the structure constants $C_{ijk}^s$ and $\lambda_{ij}$, $ 1 \leq i,j,k,s \leq n $, satisfy
\begin{equation} \label{eq:Clam}
  \sum_{u,v,w} \lambda_{iu} \lambda_{jv} \lambda_{kw} C_{uvw}^t - \sum_{s=1}^n C_{ijk}^s \lambda_{st} = 0, \quad 1 \leq t \leq n.
\end{equation}
\end{proposition}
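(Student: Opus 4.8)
The plan is to do exactly what was done for $\tfrac13$-derivations in Proposition~\ref{n-13der}: expand both sides of the automorphism condition~\eqref{eq:f-iso} on basis vectors, collect coefficients of each $e_t$, and read off the equivalent system of scalar equations. First I would note that since $\phi$ is assumed to be a linear map given by a matrix $(\lambda_{ij})$, it is an automorphism of the $3$-Lie algebra precisely when $\phi$ is bijective and satisfies $\phi([e_i,e_j,e_k]) = [\phi(e_i),\phi(e_j),\phi(e_k)]$ for all $1\le i,j,k\le n$, by multilinearity and skew-symmetry of the bracket. (Bijectivity will turn out to be automatic or can be folded in separately; see below.)

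Next I would compute the right-hand side: using~\eqref{eq:phi-ei} and trilinearity,
\[
[\phi(e_i),\phi(e_j),\phi(e_k)]
= \Bigl[\sum_u \lambda_{iu} e_u,\ \sum_v \lambda_{jv} e_v,\ \sum_w \lambda_{kw} e_w\Bigr]
= \sum_{u,v,w} \lambda_{iu}\lambda_{jv}\lambda_{kw}\,[e_u,e_v,e_w]
= \sum_{t=1}^n\Bigl(\sum_{u,v,w}\lambda_{iu}\lambda_{jv}\lambda_{kw}\,C_{uvw}^t\Bigr) e_t,
\]
while the left-hand side is
\[
\phi([e_i,e_j,e_k]) = \phi\Bigl(\sum_{s=1}^n C_{ijk}^s e_s\Bigr)
= \sum_{s=1}^n C_{ijk}^s\,\phi(e_s)
= \sum_{t=1}^n\Bigl(\sum_{s=1}^n C_{ijk}^s \lambda_{st}\Bigr) e_t.
\]
Subtracting and using that $\{e_1,\dots,e_n\}$ is a basis, equality holds for all $i,j,k$ iff the coefficient of each $e_t$ vanishes, which is exactly~\eqref{eq:Clam}. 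This is the entire content of the proof; it is a routine bookkeeping argument parallel to the previous proposition.

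The only genuine subtlety is the invertibility requirement in the definition of automorphism, which~\eqref{eq:Clam} does not visibly encode. I would handle this by one of two remarks: either the statement is to be read as characterizing those linear endomorphisms satisfying~\eqref{eq:f-iso} (and invertibility is imposed separately on the matrix $(\lambda_{ij})$, equivalently $\det(\lambda_{ij})\ne 0$), or — in the cases actually used in this paper, where the ground field is $\mathbb C$ and $n=3$ with the specific bracket~\eqref{eq:basis} of $A_3$ — one checks directly afterward that the solutions of~\eqref{eq:Clam} relevant to $A_3$ are automatically invertible. Since the present proposition is the general $n$-dimensional bookkeeping lemma and the invertibility will be verified concretely when it is specialized to $A_3$ in Proposition~\ref{phi-lamdae}, I expect the cleanest route is to state here that $\phi$ is an algebra endomorphism iff~\eqref{eq:Clam} holds, with invertibility understood as an additional standing hypothesis on $(\lambda_{ij})$; this is the only place where care is needed, and it is more a matter of phrasing than of mathematical difficulty.
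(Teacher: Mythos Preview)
Your proof is correct and follows exactly the same computation as the paper: expand $[\phi(e_i),\phi(e_j),\phi(e_k)]-\phi([e_i,e_j,e_k])$ using \eqref{eq:eiejek} and \eqref{eq:phi-ei}, collect coefficients of each $e_t$, and conclude. The paper's own proof likewise does not address the invertibility issue you flagged; it simply identifies \eqref{eq:Clam} with the endomorphism condition and leaves invertibility to be checked in the specialization to $A_3$ in Proposition~\ref{phi-lamdae}, so your handling of this point is entirely appropriate.
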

\begin{proof}
By \eqref{eq:eiejek} and \eqref{eq:phi-ei}, for all $ 1 \leq i,j,k \leq n$, we have
\begin{eqnarray*}
  &&[\phi(e_i),\phi(e_j),\phi(e_k)]-\phi([e_i,e_j,e_k])  \\
  &=& [\sum_{u=1}^n \lambda_{iu} e_u, \sum_{v=1}^n \lambda_{jv} e_v, \sum_{w=1}^n \lambda_{kw} e_w] - \phi(\sum_{s=1}^n C_{ijk}^s e_s)   \\
  &=& \sum_{u,v,w} (\lambda_{iu} \lambda_{jv} \lambda_{kw})(\sum_{t=1}^n C_{uvw}^t e_t)-\sum_{s=1}^n C_{ijk}^s \sum_{t=1}^n \lambda_{st} e_t  \\
  &=& \sum_{t=1}^n (\sum_{u,v,w} \lambda_{iu} \lambda_{jv} \lambda_{kw} C_{uvw}^t - \sum_{s=1}^n C_{ijk}^s \lambda_{st}) e_t.
\end{eqnarray*}
Then $\phi$ is an automorphism of $(A,[\cdot,\cdot,\cdot])$ if and only if \eqref{eq:Clam} holds.
\end{proof}

\begin{proposition} \label{vd-beta}
Let $ \varphi:A_3\rightarrow A_3 $ be a $\frac{1}{3}$-derivation on the $3$-dimensional $3$-Lie algebra $(A_3,[\cdot,\cdot,\cdot])$, which is
given by
\begin{equation} \label{eq:varei-3}
\varphi(e_i)=\sum_{j=1}^{3} \beta_{ij} e_j,\quad \forall~ \beta_{ij}\in \mathbb{C},~ 1 \leq i \leq 3.
\end{equation}
Then
\begin{equation}  \label{eq:var-3deri}
  \left\{\begin{split}
  \varphi(e_1) &= \frac{\beta_{22}+\beta_{33}}{2}e_1, \\
  \varphi(e_2) &= \beta_{21} e_1 + \beta_{22} e_2 + \beta_{23} e_3,\\
  \varphi(e_3) &= \beta_{31} e_1 + \beta_{32} e_2 + \beta_{33} e_3.
  \end{split}\right.
\end{equation}
\end{proposition}
\begin{proof}
By Proposition \ref{3-dim}, we have $C_{123}^1=1$, the rest are zero. Then by \eqref{eq:Cbeta}, we have
\begin{equation*}
  \left\{\begin{split}
  2 \beta_{11} &= \beta_{22} + \beta_{33}, \\
  \beta_{12} &= \beta_{13} = 0.
  \end{split}\right.
\end{equation*}
By solving the above equations, we obtain the $\frac{1}{3}$-derivations of  $(A_3,[\cdot,\cdot,\cdot])$.
\end{proof}

\begin{proposition}  \label{phi-lamdae}
Let $\phi:A_3\rightarrow A_3$ be an automorphism of the $3$-dimensional $3$-Lie algebra $(A_3,[\cdot,\cdot,\cdot])$, which is given by
\begin{equation}\label{eq:phi-ei-3}
  \phi(e_i)=\sum_{j=1}^{3} \lambda_{ij} e_j,\quad \forall~ \lambda_{ij}\in \mathbb{C}, ~ 1 \leq i \leq 3.
\end{equation}
Then
\begin{equation}\label{eq:phi-lamdae}
\phi(e_1, e_2, e_3)=
\left(
  \begin{array}{ccc}
    \lambda_{11} & 0 & 0 \\
    \lambda_{21} & \lambda_{22} & \lambda_{23} \\
    \lambda_{31} & \lambda_{32} & \lambda_{33} \\
  \end{array}
\right)
\left(
  \begin{array}{c}
    e_1 \\
    e_2 \\
    e_3 \\
  \end{array}
\right),
\end{equation}
where $ \lambda_{22} \lambda_{33} -\lambda_{23} \lambda_{32} = 1, \lambda_{11} \neq 0.$ The above matrix $(\lambda_{ij})_{3\times3}$ is called the automorphism matrix of $A_3$, denoted by $\Phi$.
\end{proposition}
\begin{proof}
By \eqref{eq:Clam}, we have
\begin{equation*}
  \left\{\begin{split}
  \lambda_{11} ( \lambda_{22} \lambda_{33} - \lambda_{23} \lambda_{32} - 1 ) = 0,\\
  \lambda_{12} = \lambda_{13} =0.
  \end{split}\right.
\end{equation*}
Since $\phi$ is an automorphism, $\phi(e_1)\neq 0$ , we have $\lambda_{11} \neq 0$. Thus, the conclusion holds.
\end{proof}

\begin{remark}
Let $(A,\cdot,[\cdot,\cdot,\cdot])$ be a transposed Poisson $3$-Lie algebra with the basis $\left\{e_{1}, e_{2}, \cdots, e_{n}\right\}$ and let $L_{e_i}:A \rightarrow A$ be the left multiplication on $(A,\cdot)$.
Then, for all $1\leq i,j \leq n$, we have
\begin{equation}\label{eq:var-com}
  L_{e_i} (e_j) = e_i \cdot e_j = e_j \cdot e_i = L_{e_j} (e_i).
\end{equation}
Moreover, $L_{e_i}$  gives a $\frac{1}{3}$-derivation of the 3-Lie algebra $(A,[\cdot,\cdot,\cdot])$.
Conversely, for any $\frac{1}{3}$-derivation $\varphi:A \rightarrow A$ defined by \eqref{eq:varei}, it induces a transposed Poisson $3$-Lie algebra.
\end{remark}

\begin{proposition}  \label{com-multi}
Let $(A_3,\cdot,[\cdot,\cdot,\cdot])$ be a $3$-dimensional transposed Poisson $3$-Lie algebra defined on $(A_3,[\cdot,\cdot,\cdot])$, and let $L_{e_i}:A_3 \rightarrow A_3$ be the left multiplication of $(A_3,\cdot)$ defined by
\begin{equation} \label{eq:eimultiej}
  L_{e_i}(e_j)= e_i \cdot e_j = \sum_{k=1}^{3} \beta_{jk}^i e_k,\quad \forall~ \beta_{jk}^i \in \mathbb{C},~ 1 \leq i,j \leq 3.
\end{equation}
Then the commutative multiplication of $(A_3,\cdot,[\cdot,\cdot,\cdot])$ is given by
\begin{equation}  \label{eq:com-multi}
  \left\{\begin{split}
  e_1\cdot e_1&=0,\\
  e_1 \cdot e_2 &= \frac{\beta_{22}^2+\beta_{33}^2}{2}e_1, \\
  e_1 \cdot e_3 &= \frac{\beta_{32}^2+\beta_{33}^3}{2}e_1, \\
  e_2 \cdot e_2 &= \beta_{21}^2 e_1+\beta_{22}^2 e_2 +\beta_{23}^2 e_3, \\
  e_2 \cdot e_3 &= \beta_{31}^2 e_1+\beta_{32}^2 e_2 +\beta_{33}^2 e_3, \\
  e_3 \cdot e_3 &= \beta_{31}^3 e_1+\beta_{32}^3 e_2 +\beta_{33}^3 e_3,
  \end{split}\right.
\end{equation}
where $\beta_{jk}^i \in \mathbb{C},~ 2 \leq i,j \leq 3,~ 1 \leq k \leq 3$.
\end{proposition}
\begin{proof}
By Proposition \ref{vd-beta} and \eqref{eq:eimultiej}, for all $e_i, ~1 \leq i \leq 3$, we have
\begin{equation} \label{eq:betai}
  \left\{\begin{split}
  L_{e_i} (e_1) &= e_i \cdot e_1 = \frac{\beta_{22}^i + \beta_{33}^i }{2} e_1,\\
  L_{e_i} (e_2) &= e_i \cdot e_2 = \beta_{21}^i e_1 + \beta_{22}^i e_2 + \beta_{23}^i e_3,\\
  L_{e_i} (e_3) &= e_i \cdot e_3 = \beta_{31}^i e_1 + \beta_{32}^i e_2 + \beta_{33}^i e_3,
  \end{split}\right.
\end{equation}
where $\beta_{jk}^i \in \mathbb{C},~ 2 \leq j \leq 3,~ 1 \leq k \leq 3$.
By the commutative law of $(A,\cdot)$, we have
\begin{equation*}
  \left\{\begin{split}
  2\beta_{21}^1 &= \beta_{22}^2 + \beta_{33}^2 ,\\
  2\beta_{31}^1 &= \beta_{22}^3 + \beta_{33}^3 , \\
  \beta_{21}^3 &= \beta_{31}^2, ~\beta_{22}^3 = \beta_{32}^2, ~\beta_{23}^3 = \beta_{33}^2,\\
    \beta_{22}^1 &= \beta_{23}^1 =\beta_{32}^1 = \beta_{33}^1 =0.\\
  \end{split}\right.
\end{equation*}
Thus, we get the  commutative multiplication of $(A_3,\cdot,[\cdot,\cdot,\cdot])$ satisfying \eqref{eq:com-multi}.
%We can verify the result satisfies the \eqref{eq:TP3}.
\end{proof}

\begin{remark}  \label{asso}%交换不一定结合，方程未解出
If the commutative multiplication in Proposition \ref{com-multi} are associative, then the structure constants $\beta_{jk}^i \in \mathbb{C},~ 2 \leq i,j \leq 3,~ 1 \leq k \leq 3$, satisfy the following equations
\begin{equation*}
  \left\{\begin{split}
  \beta_{22}^2 \beta_{22}^2 + 2\beta_{23}^2 \beta_{32}^2 + 2\beta_{23}^2 \beta_{33}^3 &= \beta_{33}^2 \beta_{33}^2,  \\
  \beta_{22}^2 \beta_{32}^2 + 3\beta_{32}^2 \beta_{33}^2 + \beta_{33}^2 \beta_{33}^3 &= \beta_{22}^2 \beta_{33}^3,  \\
  2\beta_{22}^2 \beta_{32}^3 + 2\beta_{33}^2 \beta_{32}^3 + \beta_{33}^3 \beta_{33}^3 &= \beta_{32}^2 \beta_{32}^2,  \\
  \beta_{21}^2 \beta_{33}^3 + \beta_{22}^2 \beta_{31}^2 + 2\beta_{23}^2 \beta_{31}^3 &= \beta_{21}^2 \beta_{32}^2 + 3\beta_{31}^2 \beta_{33}^2 , \\
  \beta_{32}^2 \beta_{33}^2 &= \beta_{23}^2 \beta_{32}^3, \\
  \beta_{23}^2 \beta_{32}^2 + \beta_{33}^2 \beta_{33}^2 &= \beta_{22}^2 \beta_{33}^2 + \beta_{23}^2 \beta_{33}^3, \\
  \beta_{31}^3 \beta_{22}^2 + 2\beta_{21}^2 \beta_{32}^3 + \beta_{31}^2 \beta_{33}^3 &= \beta_{31}^3 \beta_{33}^2 + 3\beta_{31}^2 \beta_{32}^2, \\
  \beta_{32}^2 \beta_{32}^2 + \beta_{33}^2 \beta_{32}^3 &= \beta_{22}^2 \beta_{32}^3 + \beta_{32}^2 \beta_{33}^3.
  \end{split}\right.
\end{equation*}
\end{remark}

The automorphisms of $3$-dimensional transposed Poisson $3$-Lie algebra $(A_3,\cdot,[\cdot,\cdot,\cdot])$ are both the automorphisms of commutative algebra $(A_3,\cdot)$ and the automorphisms of $3$-Lie algebra $(A_3,[\cdot,\cdot,\cdot])$.
The automorphisms of $3$-Lie algebra $(A_3,[\cdot,\cdot,\cdot])$ have already been described in Proposition \ref{phi-lamdae}. Now we give a description of an automorphism of the transposed Poisson 3-Lie algebra $(A_3,\cdot, [\cdot,\cdot,\cdot])$ in the case $L_{e_1}=0$.

\begin{lemma}\label{lem:equations}
The linear mapping $\phi$ defined by \eqref{eq:phi-ei-3} is an automorphism of $3$-dimensional transposed Poisson $3$-Lie algebra $(A_3,\cdot,[\cdot,\cdot,\cdot])$ with $L_{e_1}=0$ if and only if the structure constants $\beta_{jk}^i, \lambda_{pq} \in \mathbb{C},~ (2 \leq i,j \leq 3,~ 1 \leq k,p,q \leq 3)$, satisfy the following thirteen equations:
\begin{enumerate}[(i)]
\item $\beta_{22}^2+\beta_{33}^2=0$,
\item $\beta_{32}^2+\beta_{33}^3=0$,
  \item $\beta_{22}^2 + \beta_{33}^2 =\lambda_{33} (\beta_{22}^2 + \beta_{33}^2 ) - \lambda_{23} ( \beta_{32}^2 + \beta_{33}^3 ),$
  \item $\beta_{32}^2 + \beta_{33}^3 = -\lambda_{32} (\beta_{22}^2+\beta_{33}^2 ) + \lambda_{22} ( \beta_{32}^2 + \beta_{33}^3 ),$
  \item $\beta_{22}^2 = \lambda_{22} \lambda_{22} \lambda_{33} \beta_{22}^2 - \lambda_{22} \lambda_{22} \lambda_{32} \beta_{23}^2 + 2\lambda_{22}\lambda_{23}\lambda_{33}\beta_{32}^2 -2\lambda_{22} \lambda_{23} \lambda_{32} \beta_{33}^2 + \lambda_{23} \lambda_{23} \lambda_{33} \beta_{32}^3
      \\- \lambda_{23} \lambda_{23} \lambda_{32} \beta_{33}^3,$
  \item $\beta_{23}^2 = -\lambda_{22}\lambda_{22}\lambda_{23} \beta_{22}^2 + \lambda_{22}\lambda_{22}\lambda_{22}\beta_{23}^2 -2 \lambda_{22} \lambda_{23}\lambda_{23}\beta_{32}^2 +2\lambda_{22} \lambda_{22} \lambda_{23}\beta_{33}^2 -\lambda_{23}\lambda_{23}\lambda_{23}\beta_{32}^3 \\+\lambda_{22}\lambda_{23}\lambda_{23} \beta_{33}^3,$
  \item $\beta_{32}^2 =\lambda_{22}\lambda_{32}\lambda_{33} \beta_{22}^2 - \lambda_{22} \lambda_{32} \lambda_{32}\beta_{23}^2 +\lambda_{33} (2\lambda_{22}\lambda_{33}-1)\beta_{32}^2 -\lambda_{32} (2\lambda_{22}\lambda_{33}-1)\beta_{33}^2 +\lambda_{23}\lambda_{33}\lambda_{33}\beta_{32}^3
      \\-\lambda_{23} \lambda_{32}\lambda_{33}\beta_{33}^3,$
  \item $\beta_{33}^2 = -\lambda_{22} \lambda_{23} \lambda_{32}\beta_{22}^2 + \lambda_{22} \lambda_{22}\lambda_{32}\beta_{23}^2 -\lambda_{23}(2\lambda_{22}\lambda_{33}-1)\beta_{32}^2 +\lambda_{22}(2\lambda_{22} \lambda_{33}-1)\beta_{33}^2 - \lambda_{23} \lambda_{23} \lambda_{33} \beta_{32}^3
      \\+ \lambda_{22} \lambda_{23} \lambda_{33}\beta_{33}^3,$
  \item $\beta_{32}^3 =\lambda_{32} \lambda_{32} \lambda_{33} \beta_{22}^2 - \lambda_{32} \lambda_{32} \lambda_{32} \beta_{23}^2 + 2 \lambda_{32} \lambda_{33} \lambda_{33} \beta_{32}^2 -2\lambda_{32} \lambda_{32} \lambda_{33} \beta_{33}^2 +\lambda_{33} \lambda_{33}\lambda_{33} \beta_{32}^3
      \\-\lambda_{32}\lambda_{33}\lambda_{33} \beta_{33}^3,$
  \item $\beta_{33}^3 = -\lambda_{23} \lambda_{32} \lambda_{32} \beta_{22}^2 + \lambda_{22}\lambda_{32}\lambda_{32}\beta_{23}^2 - 2\lambda_{23} \lambda_{32} \lambda_{33} \beta_{32}^2 +2\lambda_{22} \lambda_{32} \lambda{33} \beta_{33}^2 - \lambda_{23} \lambda_{33}\lambda_{33} \beta_{32}^3
      \\+\lambda_{22}\lambda_{33}\lambda_{33} \beta_{33}^3,$
  \item $\beta_{21}^2 =\frac{1}{\lambda_{11}}(\lambda_{22} \lambda_{22}\beta_{21}^2 -\lambda_{31} \beta_{23}^2 + \lambda_{21} \beta_{33}^2 + 2\lambda_{22} \lambda_{23} \beta_{31}^2 + \lambda_{23} \lambda_{23} \beta_{31}^3),$
  \item $\beta_{31}^2 =\frac{1}{\lambda_{11}}(\frac{\lambda_{31}(\beta_{22}^2 - \beta_{33}^2)+\lambda_{21}(\beta_{33}^3 - \beta_{32}^2)}{2} + \lambda_{22} \lambda_{32} \beta_{21}^2 +(\lambda_{22}\lambda_{33}+ \lambda_{23} \lambda_{32})\beta_{31}^2+\lambda_{23}\lambda_{33} \beta_{31}^3),$
  \item $\beta_{31}^3 = \frac{1}{\lambda_{11}}(\lambda_{32} \lambda_{32} \beta_{21}^2 + 2 \lambda_{32} \lambda_{33} \beta_{31}^2 + \lambda_{31} \beta_{32}^2 + \lambda_{33} \lambda_{33} \beta_{31}^3 -\lambda_{21} \beta_{32}^3),$
\end{enumerate}
where $\lambda_{22} \neq 0$, $\lambda_{23} \neq 0$, $\lambda_{32} \neq 0$, $\lambda_{33} \neq 0$, $\lambda_{11} \neq 0$.
\end{lemma}

\begin{proof}
Since  $L_{e_1}=0$, we have $e_1 \cdot e_2 = e_1 \cdot e_3 =0$ and
\begin{equation*}\label{eq:conditions}
  \beta_{22}^2+\beta_{33}^2=0,\quad \beta_{32}^2+\beta_{33}^3=0.
\end{equation*}
%If $\phi$ is an automorphism of $3$-dimensional transposed Poisson $3$-Lie algebra $(A_3,\cdot,[\cdot,\cdot,\cdot])$, then we have
%\begin{equation*}
% \begin{split}
%  \phi(e_1 \cdot e_2) = &\phi(e_1)\cdot \phi(e_2),\quad
%  \phi(e_1 \cdot e_3) = \phi(e_1)\cdot \phi(e_3),\quad
%  \phi(e_2 \cdot e_2) = \phi(e_2)\cdot \phi(e_2), \\
%  &\phi(e_2 \cdot e_3) = \phi(e_2)\cdot \phi(e_3),\quad
%  \phi(e_3 \cdot e_3) = \phi(e_3)\cdot \phi(e_3).
% \end{split}
%\end{equation*}
By \eqref{eq:phi-lamdae} and \eqref{eq:com-multi}, we have
\begin{eqnarray*}
  && \phi(e_1 \cdot e_2) - \phi(e_1)\cdot \phi(e_2) \\
  &=& \phi(\frac{\beta_{22}^2+\beta_{33}^2}{2}e_1) - \lambda_{11} e_1 \cdot (\lambda_{21} e_1 + \lambda_{22} e_2 + \lambda_{23} e_3)  \\
  &=& \frac{\beta_{22}^2+\beta_{33}^2}{2} \lambda_{11} e_1 - \lambda_{11}\lambda_{22} \frac{\beta_{22}^2+\beta_{33}^2}{2}e_1 - \lambda_{11}\lambda_{23} \frac{\beta_{32}^2+\beta_{33}^3}{2}e_1 \\
  &=& \lambda_{11} (\frac{\beta_{22}^2+\beta_{33}^2}{2} -\lambda_{22} \frac{\beta_{22}^2+\beta_{33}^2}{2} - \lambda_{23} \frac{\beta_{32}^2+\beta_{33}^3}{2}) e_1, \\
  && \phi(e_1 \cdot e_3) - \phi(e_1)\cdot \phi(e_3) \\
  &=& \phi(\frac{\beta_{32}^2+\beta_{33}^3}{2}e_1) - \lambda_{11} e_1 \cdot (\lambda_{31} e_1 + \lambda_{32} e_2 + \lambda_{33} e_3)  \\
  &=& \frac{\beta_{32}^2+\beta_{33}^3}{2} \lambda_{11} e_1 - \lambda_{11}\lambda_{32} \frac{\beta_{22}^2+\beta_{33}^2}{2}e_1 - \lambda_{11}\lambda_{33} \frac{\beta_{32}^2+\beta_{33}^3}{2}e_1 \\
  &=& \lambda_{11} (\frac{\beta_{32}^2+\beta_{33}^3}{2} -\lambda_{32} \frac{\beta_{22}^2+\beta_{33}^2}{2} - \lambda_{33} \frac{\beta_{32}^2+\beta_{33}^3}{2}) e_1,  \\
  && \phi(e_2 \cdot e_2) - \phi(e_2)\cdot \phi(e_2) \\
  &=& \phi(\beta_{21}^2 e_1+\beta_{22}^2 e_2 +\beta_{23}^2 e_3) - (\lambda_{21} e_1 + \lambda_{22} e_2 + \lambda_{23} e_3) \cdot (\lambda_{21} e_1 + \lambda_{22} e_2 + \lambda_{23} e_3)  \\
  &=& \Big(\lambda_{11}\beta_{21}^2 + \lambda_{21}\beta_{22}^2 + \lambda_{31}\beta_{23}^2 - \lambda_{21}\lambda_{22}(\beta_{22}^2+\beta_{33}^2) - \lambda_{21}\lambda_{23} (\beta_{32}^2+\beta_{33}^3) -2\lambda_{22}\lambda_{23} \beta_{31}^2 \\
  &&- \lambda_{22}\lambda_{22} \beta_{21}^2 - \lambda_{23}\lambda_{23} \beta_{31}^3 \Big)e_1 + (\lambda_{22}\beta_{22}^2 + \lambda_{32}\beta_{23}^2 - 2\lambda_{22}\lambda_{23} \beta_{32}^2 - \lambda_{22}\lambda_{22} \beta_{22}^2 - \lambda_{23}\lambda_{23} \beta_{32}^3)e_2 \\
  &&+ (\lambda_{23}\beta_{22}^2 + \lambda_{33}\beta_{23}^2 - 2\lambda_{22}\lambda_{23} \beta_{33}^2 - \lambda_{22}\lambda_{22} \beta_{23}^2 - \lambda_{23}\lambda_{23} \beta_{33}^3)e_3, \\
  && \phi(e_2 \cdot e_3) - \phi(e_2)\cdot \phi(e_3) \\
  &=& \phi(\beta_{31}^2 e_1+\beta_{32}^2 e_2 +\beta_{33}^2 e_3) - (\lambda_{21} e_1 + \lambda_{22} e_2 + \lambda_{23} e_3) \cdot (\lambda_{31} e_1 + \lambda_{32} e_2 + \lambda_{33} e_3)  \\
  &=& \Big(\lambda_{11}\beta_{31}^2 + \lambda_{21}\beta_{32}^2 + \lambda_{31}\beta_{33}^2 - (\lambda_{21}\lambda_{32}+ \lambda_{22}\lambda_{31})\frac{\beta_{22}^2+\beta_{33}^2}{2} - (\lambda_{21}\lambda_{33}+ \lambda_{23}\lambda_{31}) \frac{\beta_{32}^2+\beta_{33}^3}{2} \\
  &&- (\lambda_{22}\lambda_{33} +\lambda_{23}\lambda_{32}) \beta_{31}^2 - \lambda_{22}\lambda_{32} \beta_{21}^2 - \lambda_{23}\lambda_{33} \beta_{31}^3 \Big)e_1 + \Big(\lambda_{22}\beta_{32}^2 + \lambda_{32}\beta_{33}^2 \\
  &&- (\lambda_{22}\lambda_{33} +\lambda_{23}\lambda_{32}) \beta_{32}^2 -\lambda_{22}\lambda_{32} \beta_{22}^2 - \lambda_{23}\lambda_{33} \beta_{32}^3\Big)e_2 + \Big(\lambda_{23}\beta_{32}^2 + \lambda_{33}\beta_{33}^2 \\
  &&- (\lambda_{22}\lambda_{33} +\lambda_{23}\lambda_{32}) \beta_{33}^2 - \lambda_{22}\lambda_{32} \beta_{23}^2 - \lambda_{23}\lambda_{33} \beta_{33}^3\Big)e_3,  \\
  && \phi(e_3 \cdot e_3) - \phi(e_3)\cdot \phi(e_3) \\
  &=& \phi(\beta_{31}^3 e_1+\beta_{32}^3 e_2 +\beta_{33}^3 e_3) - (\lambda_{31} e_1 + \lambda_{32} e_2 + \lambda_{33} e_3) \cdot (\lambda_{31} e_1 + \lambda_{32} e_2 + \lambda_{33} e_3)  \\
  &=& \Big(\lambda_{11}\beta_{31}^3 + \lambda_{21}\beta_{32}^3 + \lambda_{31}\beta_{33}^3 - \lambda_{31}\lambda_{32}(\beta_{22}^2+\beta_{33}^2) - \lambda_{31}\lambda_{33} (\beta_{32}^2+\beta_{33}^3) -2\lambda_{32}\lambda_{33} \beta_{31}^2 \\
  &&- \lambda_{32}\lambda_{32} \beta_{21}^2 - \lambda_{33}\lambda_{33} \beta_{31}^3 \Big)e_1 + (\lambda_{22}\beta_{32}^3 + \lambda_{32}\beta_{33}^3 - 2\lambda_{32}\lambda_{33} \beta_{32}^2 - \lambda_{32}\lambda_{32} \beta_{22}^2 - \lambda_{33}\lambda_{33} \beta_{32}^3)e_2 \\
  &&+ (\lambda_{23}\beta_{32}^3 + \lambda_{33}\beta_{33}^3 - 2\lambda_{32}\lambda_{33} \beta_{33}^2 - \lambda_{32}\lambda_{32} \beta_{23}^2 - \lambda_{33}\lambda_{33} \beta_{33}^3)e_3.
\end{eqnarray*}
Then $\phi$ is an automorphism of $3$-dimensional transposed Poisson $3$-Lie algebra $(A_3,\cdot,[\cdot,\cdot,\cdot])$ if and only if the equations (iii) $\sim$ (xiii) hold.
\end{proof}

From Lemma \ref{lem:equations}, the structure constants $\beta_{ij}^k$ of the commutative algebra $(A_3,\cdot)$ are only satisfy following four cases:
\begin{itemize}
  \item Case (I): $\beta_{32}^2=\beta_{33}^3=0$, $\beta_{22}^2=-\beta_{33}^2 \neq 0$;
  \item Case (II): $\beta_{22}^2=\beta_{33}^2=0$, $\beta_{32}^2=-\beta_{33}^3 \neq 0$;
  \item Case (III): $\beta_{22}^2=-\beta_{33}^2 \neq 0$, $\beta_{32}^2=-\beta_{33}^3\neq 0$;
  \item Case (IV): $\beta_{22}^2=\beta_{33}^2=\beta_{32}^2=\beta_{33}^3=0$.
\end{itemize}
By the above notations, we give the classification theorem of $(A_3, \cdot, [\cdot,\cdot,\cdot])$ in the case $L_{e_1}=0$, and see Section \ref{sec:proof} for the detailed proof.

\begin{theorem}\label{classification}
Let $(A_3,\cdot,[\cdot,\cdot,\cdot])$ be a $3$-dimensional transposed Poisson $3$-Lie algebra and the commutative multiplication be given by \eqref{eq:eimultiej} with $L_{e_1}=0$. Then $(A_3,\cdot,[\cdot,\cdot,\cdot])$ is isomorphic to one of the following algebras:
$$
  \quad T_1:~\left\{\begin{split}
  &e_2 \cdot e_2 = \alpha e_2,~ e_2 \cdot e_3 = -\alpha e_3,~ e_3 \cdot e_3 = -3\alpha e_2,\\
  &[e_1,e_2,e_3] =e_1;
  \end{split}\right.\qquad\qquad\qquad\qquad\qquad\qquad\qquad\qquad\qquad
$$
$$
  \quad T_2:~\left\{\begin{split}
  &e_2 \cdot e_2 = \theta e_1 + \alpha e_2,~ e_2 \cdot e_3 = - \alpha e_3,~ e_3 \cdot e_3 = 3\theta e_1 - 3\alpha e_2,\\
  &[e_1,e_2,e_3]=e_1;
  \end{split}\right.\qquad\qquad\qquad\qquad\qquad\qquad\qquad
$$
$$
  \quad T_3:~\left\{\begin{split}
  &e_2 \cdot e_2 = \alpha e_2 + \alpha e_3,~ e_2 \cdot e_3 = -\alpha e_3,~ e_3 \cdot e_3 = -3\alpha e_2,\\
  &[e_1,e_2,e_3]=e_1;
  \end{split}\right.\qquad\qquad\qquad\qquad\qquad\qquad\qquad\qquad
$$
$$
  \quad T_4:~\left\{\begin{split}
  &e_2 \cdot e_2 = \alpha e_1 + \alpha e_2 + \alpha e_3,~ e_2 \cdot e_3 = - \frac{1}{2} \alpha e_1 - \alpha e_3,~ e_3 \cdot e_3 = - 3\alpha e_2,\\
  &[e_1,e_2,e_3]=e_1;
  \end{split}\right.\qquad\qquad\qquad\qquad\qquad
$$
$$
  \quad T_{5}:~\left\{\begin{split}
  &e_2 \cdot e_2 = \alpha e_2,~ e_2 \cdot e_3 = \alpha e_1 -3 \alpha e_2 -\alpha e_3,~ e_3 \cdot e_3 = -2\alpha e_1 + 6\alpha e_2 + 3\alpha e_3,\\
  &[e_1,e_2,e_3]=e_1;
  \end{split}\right.\qquad\qquad\qquad\qquad\qquad\qquad\qquad\qquad
$$
$$
  \quad T_{6}:~\left\{\begin{split}
  &e_2 \cdot e_2 = \alpha e_1 +\alpha e_2,~ e_2 \cdot e_3 = -\alpha e_1 -3\alpha e_2 -\alpha e_3,~ e_3 \cdot e_3 = 6\alpha e_2 + 3\alpha e_3,\\
  &[e_1,e_2,e_3]=e_1;
  \end{split}\right.\qquad\qquad\qquad\qquad\qquad\qquad
$$
%$$
%  \quad T_{7}:~\left\{\begin{split}
%  &e_2 \cdot e_2 = \alpha e_2 +\theta e_3,~ e_2 \cdot e_3 = (\alpha-2\theta) e_1 +(3\theta -3 \alpha) e_2 -\alpha e_3,\\
%  &e_3 \cdot e_3 = (6\theta -2\alpha) e_1 + (6\alpha -9\theta) e_2 + (3\alpha-3\theta) e_3,\\
%  &[e_1,e_2,e_3]=e_1;
%  \end{split}\right.\qquad\qquad\qquad\qquad\qquad\qquad\qquad\qquad
%$$
$$
  \quad T_{7}:~\left\{\begin{split}
  &e_2 \cdot e_2 = \theta e_1 +\alpha e_2 +\theta e_3,~ e_2 \cdot e_3 = (3\theta -3\alpha) e_2 -\alpha e_3,\\
  &e_3 \cdot e_3 = (3\theta -3\alpha) e_1 + (6\alpha-9\theta) e_2 + (3\alpha -3\theta) e_3,\\
  &[e_1,e_2,e_3]=e_1;
  \end{split}\right.\qquad\qquad\qquad\qquad\qquad\qquad\qquad
$$
$$
  \quad T_{8}:~\left\{\begin{split}
  &e_2 \cdot e_2 = (\alpha -\theta) e_1 +\alpha e_2 +\theta e_3,~ e_2 \cdot e_3 = (-\alpha +\frac{3}{2}\theta) e_1 + (3\theta -3\alpha) e_2 -\alpha e_3,\\
  &e_3 \cdot e_3 = (6\alpha -9\theta) e_2 + (3\alpha-3\theta) e_3,\\
  &[e_1,e_2,e_3]=e_1;
  \end{split}\right.\qquad\qquad\qquad\qquad\qquad\qquad
$$
$$
  \quad T_{9}:~\left\{\begin{split}
  &e_2 \cdot e_2 = \rho e_1 + \alpha e_2 +\theta e_3,~ e_2 \cdot e_3 = -\frac{3}{2} \rho e_1 +(3\theta -3\alpha) e_2 -\alpha e_3,\\
  &e_3 \cdot e_3 = 3\rho e_1 + (6\alpha -9\theta) e_2 + (3\alpha -3\theta) e_3,\\
  &[e_1,e_2,e_3]=e_1;
  \end{split}\right.\qquad\qquad\qquad\qquad\qquad
$$
$$
  \quad T_{10}:~\left\{\begin{split}
  &e_2 \cdot e_2 = \alpha e_1,~ e_3 \cdot e_3 = -3\alpha e_1,\\
  &[e_1,e_2,e_3]=e_1.
  \end{split}\right.\qquad\qquad\qquad\qquad\qquad\qquad\qquad\qquad\qquad\qquad\qquad\qquad
$$
Where the parameters $\alpha, \theta, \rho \in \mathbb{C}, \alpha \theta \rho \neq 0$.
\end{theorem}

\section{Proof of Theorem \ref{classification}} \label{sec:proof}

In this section, we provide a detailed proof of Theorem \ref{classification}.
\subsection{The four cases in Lemma \ref{lem:equations} }
By discussing the four cases \textbf{(I-IV)} in Lemma 3.8, we obtain transposed Poisson $3$-Lie algebras $T_i', 1\leq i \leq 26$.

We first discuss the \textbf{Case (I)}. By (v) $\sim$ (x) in Lemma \ref{lem:equations} we have
\begin{equation} \label{eq:beta4}
  \left\{\begin{split}
  \beta_{22}^2 &= (\lambda_{22} \lambda_{22} \lambda_{33} + 2\lambda_{22} \lambda_{23} \lambda_{32}) \beta_{22}^2 - \lambda_{22} \lambda_{22} \lambda_{32} \beta_{23}^2 + \lambda_{23} \lambda_{23} \lambda_{33} \beta_{32}^3, \\
  \beta_{23}^2 &= -3\lambda_{22}\lambda_{22}\lambda_{23} \beta_{22}^2 + \lambda_{22} \lambda_{22} \lambda_{22} \beta_{23}^2 - \lambda_{23} \lambda_{23} \lambda_{23} \beta_{32}^3, \\
  0 &= \lambda_{32}(3\lambda_{22}\lambda_{33} - 1) \beta_{22}^2 - \lambda_{22} \lambda_{32} \lambda_{32}\beta_{23}^2 + \lambda_{23} \lambda_{33} \lambda_{33} \beta_{32}^3, \\
  \beta_{32}^3 &= 3\lambda_{32} \lambda_{32} \lambda_{33} \beta_{22}^2 - \lambda_{32} \lambda_{32} \lambda_{32} \beta_{23}^2 + \lambda_{33} \lambda_{33} \lambda_{33} \beta_{32}^3. \\
  \end{split}\right.
\end{equation}
Then we distinguish the following two subcases depending on whether $\beta_{23}^2 = 0$ or not.

\textbf{I-S1}: If $\beta_{23}^2 = 0$, then \eqref{eq:beta4} implies
\begin{align*}
  \beta_{22}^2 &= (\lambda_{22} \lambda_{22} \lambda_{33} + 2\lambda_{22} \lambda_{23} \lambda_{32}) \beta_{22}^2 + \lambda_{23} \lambda_{23} \lambda_{33} \beta_{32}^3, \\
  0 &= -3\lambda_{22}\lambda_{22}\lambda_{23} \beta_{22}^2 - \lambda_{23} \lambda_{23} \lambda_{23} \beta_{32}^3, \\
  0 &= \lambda_{32}(3\lambda_{22}\lambda_{33} - 1) \beta_{22}^2 + \lambda_{23} \lambda_{33} \lambda_{33} \beta_{32}^3, \\
  \beta_{32}^3 &= 3\lambda_{32} \lambda_{32} \lambda_{33} \beta_{22}^2 + \lambda_{33} \lambda_{33} \lambda_{33} \beta_{32}^3.
\end{align*}
We get
\begin{equation*}
  \lambda_{22} = \lambda_{33}= -\frac{1}{2},~  \lambda_{23} \lambda_{32} = -\frac{3}{4},~  \beta_{22}^2 = - \frac{4}{3} \lambda_{23} \lambda_{23} \beta_{32}^3.
\end{equation*}
Thus, $\beta_{32}^3 \neq 0$. Without loss of generality, we can suppose $\lambda_{11} =1 $. Then equations (xi) $\sim$ (xiii) in Lemma \ref{lem:equations} imply
\begin{equation} \label{eq:beta3.1}
  \left\{\begin{split}
  \beta_{21}^2 &= \frac{1}{4} \beta_{21}^2 - \lambda_{21} \beta_{22}^2 - \lambda_{23} \beta_{31}^2 + \lambda_{23} \lambda_{23} \beta_{31}^3, \\
  \beta_{31}^2 &= \lambda_{31} \beta_{22}^2 - \frac{1}{2} \lambda_{32} \beta_{21}^2 -\frac{1}{2} \beta_{31}^2 -\frac{1}{2} \lambda_{23} \beta_{31}^3,  \\
  \beta_{31}^3 &= \lambda_{32} \lambda_{32} \beta_{21}^2 - \lambda_{32} \beta_{31}^2 + \frac{1}{4} \beta_{31}^3 + \frac{3\lambda_{21} }{4\lambda_{23} \lambda_{23} } \beta_{22}^2.  \\
  \end{split}\right.
\end{equation}

Next, we distinguish the following four sub-subcases depending on whether $\beta_{21}^2, \beta_{31}^2, \beta_{31}^3$ equal to zero or not.

\textbf{(I-S1-a)}~ If $\beta_{21}^2 = 0$, then by \eqref{eq:beta3.1} we have
\begin{align*}
  0 &= - \lambda_{21} \beta_{22}^2 - \lambda_{23} \beta_{31}^2 + \lambda_{23} \lambda_{23} \beta_{31}^3, \\
  \beta_{31}^2 &= \lambda_{31} \beta_{22}^2 -\frac{1}{2} \beta_{31}^2 -\frac{1}{2} \lambda_{23} \beta_{31}^3,  \\
  \beta_{31}^3 &= - \lambda_{32} \beta_{31}^2 + \frac{1}{4} \beta_{31}^3 + \frac{3\lambda_{21} }{4\lambda_{23} \lambda_{23} } \beta_{22}^2.
\end{align*}
We can suppose $\lambda_{23}=\frac{1}{2}$, $\lambda_{32}=-\frac{3}{2}$, $\lambda_{21}=0$, $\lambda_{31}=0$. Then $\beta_{31}^2 = \beta_{31}^3 = 0$.
Under the action of the automorphism
$$\Phi_1=\left(
  \begin{array}{ccc}
    1 & 0 & 0 \\
    0 & -\frac{1}{2} & \frac{1}{2} \\
    0 & -\frac{3}{2} & -\frac{1}{2} \\
  \end{array}
\right),
$$
$(A_3,\cdot,[\cdot,\cdot,\cdot])$ is isomorphic to
\begin{equation*}
  T_1'(T_1):~ \left\{\begin{split}
  e_2 \cdot e_2 &= \beta_{22}^2 e_2, \\
  e_2 \cdot e_3 &= -\beta_{22}^2 e_3, \\
  e_3 \cdot e_3 &= -3\beta_{22}^2 e_2.
  \end{split}\right.
\end{equation*}

\textbf{(I-S1-b)}~ If $\beta_{21}^2 \neq 0$, $\beta_{31}^2 = 0$, then by \eqref{eq:beta3.1} we have
\begin{align*}
  \beta_{21}^2 &= \frac{1}{4} \beta_{21}^2 - \lambda_{21} \beta_{22}^2 + \lambda_{23} \lambda_{23} \beta_{31}^3, \\
  0 &= \lambda_{31} \beta_{22}^2 - \frac{1}{2} \lambda_{32} \beta_{21}^2 -\frac{1}{2} \lambda_{23} \beta_{31}^3,  \\
  \beta_{31}^3 &= \lambda_{32} \lambda_{32} \beta_{21}^2 + \frac{1}{4} \beta_{31}^3 + \frac{3\lambda_{21} }{4\lambda_{23} \lambda_{23} } \beta_{22}^2.
\end{align*}
We can suppose $\lambda_{23}=\frac{1}{2}$, $\lambda_{32}=-\frac{3}{2}$, $\lambda_{21}=0$, $\lambda_{31}=0$. Then $\beta_{31}^3 = 3\beta_{21}^2$.
Under the action of the automorphism
$$\Phi_2=\left(
  \begin{array}{ccc}
    1 & 0 & 0 \\
    0 & -\frac{1}{2} & \frac{1}{2} \\
    0 & -\frac{3}{2} & -\frac{1}{2} \\
  \end{array}
\right),
$$
$(A_3,\cdot,[\cdot,\cdot,\cdot])$ is isomorphic to
\begin{equation*}
  T_2'(T_2):~ \left\{\begin{split}
  e_2 \cdot e_2 &= \beta_{21}^2 e_1 + \beta_{22}^2 e_2,  \\
  e_2 \cdot e_3 &= -\beta_{22}^2 e_3,  \\
  e_3 \cdot e_3 &= 3\beta_{21}^2 e_1 - 3\beta_{22}^2 e_2.
  \end{split}\right.
\end{equation*}

\textbf{(I-S1-c)}~ If $\beta_{21}^2 \neq 0$, $\beta_{31}^2 \neq 0$, $\beta_{31}^3 = 0$, then by \eqref{eq:beta3.1} we have
\begin{align*}
  \beta_{21}^2 &= \frac{1}{4} \beta_{21}^2 - \lambda_{21} \beta_{22}^2 - \lambda_{23} \beta_{31}^2 , \\
  \beta_{31}^2 &= \lambda_{31} \beta_{22}^2 - \frac{1}{2} \lambda_{32} \beta_{21}^2 -\frac{1}{2} \beta_{31}^2,  \\
  0 &= \lambda_{32} \lambda_{32} \beta_{21}^2 - \lambda_{32} \beta_{31}^2 + \frac{3\lambda_{21} }{4\lambda_{23} \lambda_{23} } \beta_{22}^2.
\end{align*}
We can suppose $\lambda_{23}= \frac{1}{2}$, $\lambda_{32}= -\frac{3}{2}$, $\lambda_{21}=1$, $\lambda_{31}=3$. Then $\beta_{21}^2 = -2\beta_{22}^2,~ \beta_{31}^2 = \beta_{22}^2$.
Under the action of the automorphism
$$\Phi_3=\left(
  \begin{array}{ccc}
    1 & 0 & 0 \\
    1 & -\frac{1}{2} & \frac{1}{2} \\
    3 & -\frac{3}{2} & -\frac{1}{2} \\
  \end{array}
\right),
$$
$(A_3,\cdot,[\cdot,\cdot,\cdot])$ is isomorphic to
\begin{equation*}
  T_3':~ \left\{\begin{split}
  e_2 \cdot e_2 &= -2\beta_{22}^2 e_1 + \beta_{22}^2 e_2,  \\
  e_2 \cdot e_3 &= \beta_{22}^2 e_1 - \beta_{22}^2 e_3,  \\
  e_3 \cdot e_3 &= -3\beta_{22}^2 e_2.
  \end{split}\right.
\end{equation*}

\textbf{(I-S1-d)}~ If $\beta_{21}^2 \neq 0$, $\beta_{31}^2 \neq 0$, $\beta_{31}^3 \neq 0$ and suppose $\lambda_{23}= \frac{1}{2}$, $\lambda_{32}= -\frac{3}{2}$, $\lambda_{21}=0$, $\lambda_{31}=2$, then by \eqref{eq:beta3.1} we have $\beta_{31}^2 = \beta_{22}^2,~ \beta_{31}^3 = 3\beta_{21}^2 + 2\beta_{22}^2$.
Under the action of the automorphism
$$\Phi_4=\left(
  \begin{array}{ccc}
    1 & 0 & 0 \\
    0 & -\frac{1}{2} & \frac{1}{2} \\
    2 & -\frac{3}{2} & -\frac{1}{2} \\
  \end{array}
\right),
$$
$(A_3,\cdot,[\cdot,\cdot,\cdot])$ is isomorphic to
\begin{equation*}
  T_4':~ \left\{\begin{split}
  e_2 \cdot e_2 &= \beta_{21}^2 e_1 + \beta_{22}^2 e_2,  \\
  e_2 \cdot e_3 &= \beta_{22}^2 e_1 - \beta_{22}^2 e_3,  \\
  e_3 \cdot e_3 &= (3\beta_{21}^2 + 2\beta_{22}^2) e_1 - 3\beta_{22}^2 e_2.
  \end{split}\right.
\end{equation*}
%Hence, we obtain the classification (1) in Theorem \ref{classification}.

\textbf{I-S2}: If $\beta_{23}^2 \neq 0$, then we have \eqref{eq:beta4}.
If $\beta_{32}^3 = 0$, then we get $\lambda_{22} \lambda_{33} = \lambda_{23} \lambda_{32}$, which contradicts with $\lambda_{22} \lambda_{33} - \lambda_{23} \lambda_{32} = 1$. So, $\beta_{32}^3 \neq 0$.
By \eqref{eq:beta4}, we have $\lambda_{22} + \lambda_{33} = -1$. Suppose $\lambda_{22}=-2$. Then we have $\lambda_{33}=1$, $\lambda_{23} \lambda_{32}=-3$. Thus, \eqref{eq:beta4} implies
\begin{align*}
  \beta_{22}^2 &= 16 \beta_{22}^2 - 4\lambda_{32} \beta_{23}^2 + \lambda_{23} \lambda_{23} \beta_{32}^3,  \\
  \beta_{23}^2 &= -12\lambda_{23} \beta_{22}^2 -8\beta_{23}^2 -\lambda_{23} \lambda_{23} \lambda_{23} \beta_{32}^3,  \\
  0 &= -7\lambda_{32} \beta_{22}^2 + 2\lambda_{32} \lambda_{32}\beta_{23}^2 +\lambda_{23} \beta_{32}^3,  \\
  \beta_{32}^3 &= 3\lambda_{32} \lambda_{32} \beta_{22}^2 - \lambda_{32} \lambda_{32} \lambda_{32} \beta_{23}^2 + \beta_{32}^3.
\end{align*}
Therefore, we have $\lambda_{32} \beta_{23}^2 = 3\beta_{22}^2,~ \lambda_{23} \beta_{32}^3 = \lambda_{32} \beta_{22}^2$.
We can suppose $\lambda_{11} =1 $. Then equations (xi) $\sim$ (xiii) in Lemma \ref{lem:equations} imply
\begin{equation}\label{eq:beta3.2}
  \left\{\begin{split}
  \beta_{21}^2 &= 4\beta_{21}^2 - \lambda_{21} \beta_{22}^2 - 4\lambda_{23} \beta_{31}^2 + \lambda_{23} \lambda_{23} \beta_{31}^3 - \frac{3\lambda_{31}} {\lambda_{32}} \beta_{22}^2, \\
  \beta_{31}^2 &= \lambda_{31} \beta_{22}^2 - 2\lambda_{32} \beta_{21}^2 - 5\beta_{31}^2 + \lambda_{23} \beta_{31}^3,  \\
  \beta_{31}^3 &= \lambda_{32} \lambda_{32} \beta_{21}^2 + 2\lambda_{32} \beta_{31}^2 + \beta_{31}^3 - \lambda_{21} \frac{\lambda_{32}}{\lambda_{23}} \beta_{22}^2.
  \end{split}\right.
\end{equation}

Next, we distinguish the following four sub-subcases depending on whether $\beta_{21}^2, \beta_{31}^2, \beta_{31}^3 $ equal to zero or not.

\textbf{(I-S2-a)}~ If $\beta_{21}^2 = 0$, then by \eqref{eq:beta3.2} we have
\begin{align*}
  0 &= -\lambda_{21} \beta_{22}^2 -4\lambda_{23} \beta_{31}^2 + \lambda_{23} \lambda_{23} \beta_{31}^3 -\frac{3\lambda_{31}}{\lambda_{32}}\beta_{22}^2, \\
  \beta_{31}^2 &= \lambda_{31} \beta_{22}^2 - 5\beta_{31}^2 + \lambda_{23} \beta_{31}^3,  \\
  \beta_{31}^3 &= 2\lambda_{32} \beta_{31}^2 + \beta_{31}^3 - \lambda_{21} \frac{\lambda_{32}}{\lambda_{23}} \beta_{22}^2.
\end{align*}
We can suppose $\lambda_{23}= -1$, $\lambda_{32}= 3$, $\lambda_{21}=0$, $\lambda_{31}=0$. Then $\beta_{31}^2 = \beta_{31}^3 = 0$.
Under the action of the automorphism
$$\Phi_5=\left(
  \begin{array}{ccc}
    1 & 0 & 0 \\
    0 & -2 & -1 \\
    0 & 3 & 1 \\
  \end{array}
\right),
$$
$(A_3,\cdot,[\cdot,\cdot,\cdot])$ is isomorphic to
\begin{equation*}
  T_5'(T_3):~ \left\{\begin{split}
   e_2 \cdot e_2 &= \beta_{22}^2 e_2 + \beta_{22}^2 e_3, \\
   e_2 \cdot e_3 &= -\beta_{22}^2 e_3, \\
   e_3 \cdot e_3 &= -3\beta_{22}^2 e_2.
  \end{split}\right.
\end{equation*}

\textbf{(I-S2-b)}~ If $\beta_{21}^2 \neq 0$, $\beta_{31}^2 = 0$, then by \eqref{eq:beta3.2} we have
\begin{align*}
  \beta_{21}^2 &= 4\beta_{21}^2 -\lambda_{21}\beta_{22}^2 +\lambda_{23} \lambda_{23} \beta_{31}^3 -\frac{3\lambda_{31}}{\lambda_{32}}\beta_{22}^2,\\
  0 &= \lambda_{31} \beta_{22}^2 - 2\lambda_{32} \beta_{21}^2 + \lambda_{23} \beta_{31}^3, \\
  \beta_{31}^3 &= \lambda_{32} \lambda_{32} \beta_{21}^2 + \beta_{31}^3 - \lambda_{21} \frac{\lambda_{32}}{\lambda_{23}} \beta_{22}^2.
\end{align*}
We can suppose $\lambda_{23}=-1$, $\lambda_{32}=3$, $\lambda_{21}=-3$, $\lambda_{31}=3$. Then $\beta_{21}^2 = \beta_{22}^2,~ \beta_{31}^3 = -3\beta_{22}^2,~ \beta_{23}^2 = \beta_{22}^2,~ \beta_{32}^3 = -3\beta_{22}^2$.
Under the action of the automorphism
$$\Phi_6=\left(
  \begin{array}{ccc}
    1 & 0 & 0 \\
    -3 & -2 & -1 \\
    3 & 3 & 1 \\
  \end{array}
\right),
$$
$(A_3,\cdot,[\cdot,\cdot,\cdot])$ is isomorphic to
\begin{equation*}
  T_6':~ \left\{\begin{split}
   e_2 \cdot e_2 &= \beta_{22}^2 e_1 + \beta_{22}^2 e_2 + \beta_{22}^2 e_3,  \\
   e_2 \cdot e_3 &= -\beta_{22}^2 e_3,  \\
   e_3 \cdot e_3 &= -3\beta_{22}^2 e_1 - 3\beta_{22}^2 e_2.
  \end{split}\right.
\end{equation*}

\textbf{(I-S2-c)}~ If $\beta_{21}^2 \neq 0$, $\beta_{31}^2 \neq 0$, $\beta_{31}^3 = 0$, then by \eqref{eq:beta3.2} we have
\begin{align*}
  \beta_{21}^2 &= 4\beta_{21}^2 - \lambda_{21} \beta_{22}^2 - 4\lambda_{23} \beta_{31}^2 - \frac{3\lambda_{31}} {\lambda_{32}} \beta_{22}^2, \\
  \beta_{31}^2 &= \lambda_{31} \beta_{22}^2 - 2\lambda_{32} \beta_{21}^2 - 5\beta_{31}^2,  \\
  0 &= \lambda_{32} \lambda_{32} \beta_{21}^2 + 2\lambda_{32} \beta_{31}^2 - \lambda_{21} \frac{\lambda_{32}}{\lambda_{23}} \beta_{22}^2.
\end{align*}
We can suppose $\lambda_{23}= -1$, $\lambda_{32}= 3$, $\lambda_{21}=-2$, $\lambda_{31}=3 $. Then $\beta_{21}^2 = \beta_{22}^2,~ \beta_{31}^2 = -\frac{1}{2}\beta_{22}^2,~ \beta_{23}^2 = \beta_{22}^2,~ \beta_{32}^3 = -3\beta_{22}^2$.
Under the action of the automorphism
$$\Phi_7=\left(
  \begin{array}{ccc}
    1 & 0 & 0 \\
    -2 & -2 & -1 \\
    3 & 3 & 1 \\
  \end{array}
\right),
$$
$(A_3,\cdot,[\cdot,\cdot,\cdot])$ is isomorphic to
\begin{equation*}
  T_7'(T_4):~ \left\{\begin{split}
   e_2 \cdot e_2 &= \beta_{22}^2 e_1 + \beta_{22}^2 e_2 + \beta_{22}^2 e_3,  \\
   e_2 \cdot e_3 &= -\frac{1}{2}\beta_{22}^2 e_1 -\beta_{22}^2 e_3,  \\
   e_3 \cdot e_3 &= -3\beta_{22}^2 e_2.
  \end{split}\right.
\end{equation*}

\textbf{(I-S2-d)}~ If $\beta_{21}^2 \neq 0$, $\beta_{31}^2 \neq 0$, $\beta_{31}^3 \neq 0$ and suppose $\lambda_{23}= -1$, $\lambda_{32}= 3$, $\lambda_{21}=0$, $\lambda_{31}=0$, then by \eqref{eq:beta3.2} we have
$\beta_{31}^2 = -\frac{3}{2} \beta_{21}^2,~ \beta_{31}^3 = 3\beta_{21}^2,~  \beta_{23}^2 = \beta_{22}^2,~ \beta_{32}^3 = -3\beta_{22}^2$.
Under the action of the automorphism
$$\Phi_8=\left(
  \begin{array}{ccc}
    1 & 0 & 0 \\
    0 & -2 & -1 \\
    0 & 3 & 1 \\
  \end{array}
\right),
$$
$(A_3,\cdot,[\cdot,\cdot,\cdot])$ is isomorphic to
\begin{equation*}
  T_8':~ \left\{\begin{split}
   e_2 \cdot e_2 &= \beta_{21}^2 e_1 + \beta_{22}^2 e_2 + \beta_{22}^2 e_3,  \\
   e_2 \cdot e_3 &= -\frac{3}{2} \beta_{21}^2 e_1 - \beta_{22}^2 e_3,  \\
   e_3 \cdot e_3 &= 3\beta_{21}^2 e_1 - 3\beta_{22}^2 e_2.
  \end{split}\right.
\end{equation*}
%Hence, we obtain the classification (2) in Theorem \ref{classification}.

Secondly, we discuss the \textbf{Case (II)}. By (v) $\sim$ (x) in Lemma \ref{lem:equations}, we have
%\textbf{Case (II)}~ If $\beta_{22}^2=\beta_{33}^2=0$, $\beta_{32}^2=-\beta_{33}^3 \neq 0$, then by (iii)-(viii), we have
\begin{equation}\label{eq:beta4.2}
  \left\{\begin{split}
  0 &= - \lambda_{22} \lambda_{22} \lambda_{32} \beta_{23}^2 + \lambda_{23} \lambda_{23} \lambda_{33} \beta_{32}^3 -(2\lambda_{22}\lambda_{23} \lambda_{33} + \lambda_{23} \lambda_{23} \lambda_{32}) \beta_{33}^3, \\
  \beta_{23}^2 &= \lambda_{22} \lambda_{22} \lambda_{22} \beta_{23}^2 - \lambda_{23} \lambda_{23} \lambda_{23} \beta_{32}^3 + 3\lambda_{22} \lambda_{23}\lambda_{23} \beta_{33}^3, \\
  \beta_{32}^3 &= -\lambda_{32} \lambda_{32} \lambda_{32} \beta_{23}^2 + \lambda_{33} \lambda_{33} \lambda_{33} \beta_{32}^3 - 3\lambda_{32} \lambda_{33} \lambda_{33} \beta_{33}^3, \\
  \beta_{33}^3 &= \lambda_{22}\lambda_{32}\lambda_{32}\beta_{23}^2 - \lambda_{23} \lambda_{33}\lambda_{33} \beta_{32}^3 + (2\lambda_{23} \lambda_{32} \lambda_{33} + \lambda_{22} \lambda_{33} \lambda_{33}) \beta_{33}^3.
  \end{split}\right.
\end{equation}
If $\beta_{23}^2 = 0$, then by \eqref{eq:beta4.2} we have $\lambda_{23} = 0$, which contradicts with $\lambda_{23} \neq 0$. So, $\beta_{23}^2 \neq 0$. Then, we discuss the following two subcases depending on whether $\beta_{32}^3 = 0$ or not.

\textbf{II-S1}:  If $\beta_{32}^3 = 0$, then \eqref{eq:beta4.2} implies
\begin{align*}
  0 &= - \lambda_{22} \lambda_{22} \lambda_{32} \beta_{23}^2 - (2\lambda_{22} \lambda_{23} \lambda_{33}+\lambda_{23}\lambda_{23}\lambda_{32}) \beta_{33}^3, \\
  \beta_{23}^2 &= \lambda_{22} \lambda_{22} \lambda_{22} \beta_{23}^2 + 3\lambda_{22} \lambda_{23}\lambda_{23} \beta_{33}^3, \\
  0 &= -\lambda_{32} \lambda_{32} \lambda_{32} \beta_{23}^2 - 3\lambda_{32} \lambda_{33} \lambda_{33} \beta_{33}^3, \\
  \beta_{33}^3 &= \lambda_{22}\lambda_{32}\lambda_{32}\beta_{23}^2 + (2\lambda_{23} \lambda_{32} \lambda_{33} + \lambda_{22} \lambda_{33} \lambda_{33}) \beta_{33}^3.
\end{align*}
And we get $\lambda_{22} = -\frac{1}{2}$, $\lambda_{33} = -\frac{1}{2}$, $\lambda_{23} \lambda_{32} = -\frac{3}{4}$. Then $\beta_{23}^2 = -\frac{4}{3} \lambda_{23} \lambda_{23} \beta_{33}^3$.
We can suppose $\lambda_{11} = 1$. Then equations (xi) $\sim$ (xiii) in Lemma \ref{lem:equations} can be simplified into
\begin{equation}\label{eq:beta3.3}
  \left\{\begin{split}
  \beta_{21}^2 &= \frac{1}{4} \beta_{21}^2 - \lambda_{23} \beta_{31}^2 + \lambda_{23} \lambda_{23} \beta_{31}^3 + \frac{4}{3} \lambda_{23} \lambda_{23} \lambda_{31} \beta_{33}^3, \\
  \beta_{31}^2 &= -\frac{1}{2} \lambda_{32} \beta_{21}^2 - \frac{1}{2} \beta_{31}^2 -\frac{1}{2} \lambda_{23} \beta_{31}^3+\lambda_{21}\beta_{33}^3,  \\
  \beta_{31}^3 &= \lambda_{32} \lambda_{32} \beta_{21}^2 - \lambda_{32} \beta_{31}^2 + \frac{1}{4} \beta_{31}^3 - \lambda_{31} \beta_{33}^3.
  \end{split}\right.
\end{equation}

Next, we distinguish the following four sub-subcases depending on whether $\beta_{21}^2, \beta_{31}^2, \beta_{31}^3 $ equal to zero or not.

\textbf{(II-S1-a)}~ If $\beta_{21}^2 = 0$, then by \eqref{eq:beta3.3} we have
\begin{align*}
  0 &= - \lambda_{23} \beta_{31}^2 + \lambda_{23} \lambda_{23} \beta_{31}^3 + \frac{4}{3} \lambda_{23} \lambda_{23} \lambda_{31} \beta_{33}^3, \\
  \beta_{31}^2 &= -\frac{1}{2} \beta_{31}^2 - \frac{1}{2} \lambda_{23} \beta_{31}^3 + \lambda_{21} \beta_{33}^3,  \\
  \beta_{31}^3 &= -\lambda_{32} \beta_{31}^2 + \frac{1}{4} \beta_{31}^3 - \lambda_{31} \beta_{33}^3.
\end{align*}
We can suppose $\lambda_{23}=\frac{3}{2}$, $\lambda_{32}= -\frac{1}{2}$, $\lambda_{21}=0$, $\lambda_{31}=0$. Then $\beta_{31}^2 = 0,~ \beta_{31}^3 = 0,~ \beta_{23}^2 = -3 \beta_{33}^3$.
Under the action of the automorphism
$$\Phi_9=\left(
  \begin{array}{ccc}
    1 & 0 & 0 \\
    0 & -\frac{1}{2} & \frac{3}{2} \\
    0 & -\frac{1}{2} & -\frac{1}{2} \\
  \end{array}
\right),
$$
$(A_3,\cdot,[\cdot,\cdot,\cdot])$ is isomorphic to
\begin{equation*}
  T_9':~ \left\{\begin{split}
   e_2 \cdot e_2 &= -3\beta_{33}^3 e_3, \\
   e_2 \cdot e_3 &= -\beta_{33}^3 e_2, \\
   e_3 \cdot e_3 &= \beta_{33}^3 e_3.
  \end{split}\right.
\end{equation*}

\textbf{(II-S1-b)}~ If $\beta_{21}^2 \neq 0$, $\beta_{31}^2 = 0$, then by \eqref{eq:beta3.3} we have
\begin{align*}
  \beta_{21}^2 &= \frac{1}{4} \beta_{21}^2 + \lambda_{23} \lambda_{23} \beta_{31}^3 + \frac{4}{3}\lambda_{23} \lambda_{23} \lambda_{31} \beta_{33}^3, \\
  0 &= -\frac{1}{2} \lambda_{32} \beta_{21}^2 - \frac{1}{2} \beta_{31}^2 -\frac{1}{2} \lambda_{23} \beta_{31}^3 + \lambda_{21} \beta_{33}^3,  \\
  \beta_{31}^3 &= \lambda_{32} \lambda_{32} \beta_{21}^2 + \frac{1}{4} \beta_{31}^3 - \lambda_{31} \beta_{33}^3.
\end{align*}
We can suppose $\lambda_{23}= -\frac{3}{2}$, $\lambda_{32}= \frac{1}{2}$, $\lambda_{21}=0$, $\lambda_{31}=0$. Then $\beta_{21}^2 = 3\beta_{31}^3,~ \beta_{23}^2 = -3 \beta_{33}^3$.
Under the action of the automorphism
$$\Phi_{10}=\left(
  \begin{array}{ccc}
    1 & 0 & 0 \\
    0 & -\frac{1}{2} & -\frac{3}{2} \\
    0 & \frac{1}{2} & -\frac{1}{2} \\
  \end{array}
\right),
$$
$(A_3,\cdot,[\cdot,\cdot,\cdot])$ is isomorphic to
\begin{equation*}
  T_{10}':~ \left\{\begin{split}
   e_2 \cdot e_2 &= 3\beta_{31}^3 e_1 - 3\beta_{33}^3 e_3, \\
   e_2 \cdot e_3 &= -\beta_{33}^3 e_2, \\
   e_3 \cdot e_3 &= \beta_{31}^3 e_1 + \beta_{33}^3 e_3.
  \end{split}\right.
\end{equation*}

\textbf{(II-S1-c)}~ If $\beta_{21}^2 \neq 0$, $\beta_{31}^2 \neq 0$, $\beta_{31}^3 = 0$, then by \eqref{eq:beta3.3}, we have
\begin{align*}
  \beta_{21}^2 &= \frac{1}{4} \beta_{21}^2 - \lambda_{23} \beta_{31}^2 + \frac{4}{3} \lambda_{23} \lambda_{23} \lambda_{31} \beta_{33}^3, \\
  \beta_{31}^2 &= -\frac{1}{2} \lambda_{32} \beta_{21}^2 - \frac{1}{2} \beta_{31}^2 + \lambda_{21} \beta_{33}^3,  \\
  0 &= \lambda_{32} \lambda_{32} \beta_{21}^2 - \lambda_{32} \beta_{31}^2 - \lambda_{31} \beta_{33}^3.
\end{align*}
We can suppose $\lambda_{23}= \frac{3}{2}$, $\lambda_{32}=-\frac{1}{2}$, $\lambda_{21}=2$, $\lambda_{31}=2$. Then $\beta_{21}^2 = 4\beta_{33}^3,~ \beta_{31}^2 = 2\beta_{33}^3,~ \beta_{23}^2 = -3 \beta_{33}^3$.
Under the action of the automorphism
$$\Phi_{11}=\left(
  \begin{array}{ccc}
    1 & 0 & 0 \\
    2 & -\frac{1}{2} & \frac{3}{2} \\
    2 & -\frac{1}{2} & -\frac{1}{2} \\
  \end{array}
\right),
$$
$(A_3,\cdot,[\cdot,\cdot,\cdot])$ is isomorphic to
\begin{equation*}
  T_{11}':~ \left\{\begin{split}
   e_2 \cdot e_2 &= 4\beta_{33}^3 e_1 - 3\beta_{33}^3 e_3, \\
   e_2 \cdot e_3 &= 2\beta_{33}^3 e_1 -\beta_{33}^3 e_2, \\
   e_3 \cdot e_3 &= \beta_{33}^3 e_3.
  \end{split}\right.
\end{equation*}

\textbf{(II-S1-d)}~ If $\beta_{21}^2 \neq 0$, $\beta_{31}^2 \neq 0$, $\beta_{31}^3 \neq 0$ and suppose $\lambda_{23}= -\frac{3}{2}$, $\lambda_{32}=\frac{1}{2}$, $\lambda_{21}=3$, $\lambda_{31}=-1$, then by \eqref{eq:beta3.3} we have $\beta_{21}^2 = 3\beta_{31}^3,~ \beta_{31}^2 = 2\beta_{33}^3,~ \beta_{23}^2 = -3 \beta_{33}^3$.
Under the action of the automorphism
$$\Phi_{12}=\left(
  \begin{array}{ccc}
    1 & 0 & 0 \\
    3 & -\frac{1}{2} & -\frac{3}{2} \\
    -1 & \frac{1}{2} & -\frac{1}{2} \\
  \end{array}
\right),
$$
$(A_3,\cdot,[\cdot,\cdot,\cdot])$ is isomorphic to
\begin{equation*}
  T_{12}':~ \left\{\begin{split}
   e_2 \cdot e_2 &= 3\beta_{31}^3 e_1 - 3\beta_{33}^3 e_3, \\
   e_2 \cdot e_3 &= 2\beta_{33}^3 e_1 -\beta_{33}^3 e_2, \\
   e_3 \cdot e_3 &= \beta_{31}^3 e_1 + \beta_{33}^3 e_3.
  \end{split}\right.
\end{equation*}
%Hence, we obtain the classification (3) in Theorem \ref{classification}.

\textbf{II-S2}: If $\beta_{32}^3 \neq 0$, then by \eqref{eq:beta4.2} we have $\lambda_{22} + \lambda_{33} = -1$. We can suppose $\lambda_{22} = -2$, $\lambda_{33} = 1$ and $\lambda_{11} = 1$. Then we have $\lambda_{23}\lambda_{32} = -3$, $\lambda_{23} \beta_{32}^3 = -3\beta_{33}^3,~ \lambda_{32}\lambda_{32} \beta_{23}^2 = -3\beta_{33}^3$.
Then equations (xi) $\sim$ (xiii) in Lemma \ref{lem:equations} imply
\begin{equation}\label{eq:beta3.4}
  \left\{\begin{split}
  \beta_{21}^2 &= 4\beta_{21}^2 - 4\lambda_{23} \beta_{31}^2 + \lambda_{23} \lambda_{23} \beta_{31}^3 + \frac{3\lambda_{31}}{\lambda_{32}\lambda_{32}} \beta_{33}^3, \\
  \beta_{31}^2 &= -2\lambda_{32} \beta_{21}^2 - 5\beta_{31}^2 + \lambda_{23} \beta_{31}^3 + \lambda_{21} \beta_{33}^3,  \\
  \beta_{31}^3 &= \lambda_{32} \lambda_{32} \beta_{21}^2 + 2\lambda_{32} \beta_{31}^2 + \beta_{31}^3 + (\frac{3\lambda_{21}}{\lambda_{23}} - \lambda_{31}) \beta_{33}^3.
  \end{split}\right.
\end{equation}

Next, we distinguish the following four sub-subcases depending on whether $\beta_{21}^2, \beta_{31}^2, \beta_{31}^3$ equal to zero or not.

\textbf{(II-S2-a)}~ If $\beta_{21}^2 = 0$, then by \eqref{eq:beta3.4} we have
\begin{align*}
  0 &= - 4\lambda_{23} \beta_{31}^2 + \lambda_{23} \lambda_{23} \beta_{31}^3 + \frac{3\lambda_{31}}{\lambda_{32}\lambda_{32}} \beta_{33}^3, \\
  \beta_{31}^2 &= - 5\beta_{31}^2 + \lambda_{23} \beta_{31}^3 + \lambda_{21} \beta_{33}^3,  \\
  \beta_{31}^3 &= 2\lambda_{32} \beta_{31}^2 + \beta_{31}^3 + (\frac{3\lambda_{21}}{\lambda_{23}} - \lambda_{31}) \beta_{33}^3.
\end{align*}
We can suppose $\lambda_{23}= -3$, $\lambda_{32}=1$, $\lambda_{21}=0$, $\lambda_{31}=0$. Then
$\beta_{31}^2 = \beta_{31}^3 = 0,~ \beta_{23}^2 = -3\beta_{33}^3,~ \beta_{32}^3 = \beta_{33}^3$.
Under the action of the automorphism
$$\Phi_{13}=\left(
  \begin{array}{ccc}
    1 & 0 & 0 \\
    0 & -2 & -3 \\
    0 & 1 & 1 \\
  \end{array}
\right),
$$
$(A_3,\cdot,[\cdot,\cdot,\cdot])$ is isomorphic to
\begin{equation*}
  T_{13}':~ \left\{\begin{split}
   e_2 \cdot e_2 &= - 3\beta_{33}^3 e_3, \\
   e_2 \cdot e_3 &= -\beta_{33}^3 e_2, \\
   e_3 \cdot e_3 &= \beta_{33}^3 e_2 + \beta_{33}^3 e_3.
  \end{split}\right.
\end{equation*}

\textbf{(II-S2-b)}~ If $\beta_{21}^2 \neq 0$, $\beta_{31}^2 = 0$, then by \eqref{eq:beta3.4} we have
\begin{align*}
  \beta_{21}^2 &= 4\beta_{21}^2 + \lambda_{23} \lambda_{23} \beta_{31}^3 + \frac{3\lambda_{31}}{\lambda_{32}\lambda_{32}} \beta_{33}^3, \\
  0 &= -2\lambda_{32} \beta_{21}^2 + \lambda_{23} \beta_{31}^3 + \lambda_{21} \beta_{33}^3,  \\
  \beta_{31}^3 &= \lambda_{32} \lambda_{32} \beta_{21}^2 + \beta_{31}^3 + (\frac{3\lambda_{21}}{\lambda_{23}} - \lambda_{31}) \beta_{33}^3.
\end{align*}
We can suppose $\lambda_{23}=-3$, $\lambda_{32}=1$, $\lambda_{21}=1$, $\lambda_{31}=1$. Then
$\beta_{21}^2 = 2\beta_{33}^3,~ \beta_{31}^3 = -\beta_{33}^3,~ \beta_{23}^2 = -3\beta_{33}^3,~ \beta_{32}^3 = \beta_{33}^3$.
Under the action of the automorphism
$$\Phi_{14}=\left(
  \begin{array}{ccc}
    1 & 0 & 0 \\
    1 & -2 & -3 \\
    1 & 1 & 1 \\
  \end{array}
\right),
$$
$(A_3,\cdot,[\cdot,\cdot,\cdot])$ is isomorphic to
\begin{equation*}
  T_{14}':~ \left\{\begin{split}
   e_2 \cdot e_2 &= 2\beta_{33}^3 e_1 - 3\beta_{33}^3 e_3, \\
   e_2 \cdot e_3 &= -\beta_{33}^3 e_2, \\
   e_3 \cdot e_3 &= -\beta_{33}^3 e_1 + \beta_{33}^3 e_2 + \beta_{33}^3 e_3.
  \end{split}\right.
\end{equation*}

\textbf{(II-S2-c)}~ If $\beta_{21}^2 \neq 0$, $\beta_{31}^2 \neq 0$, $\beta_{31}^3 = 0$, then by \eqref{eq:beta3.4} we have
\begin{align*}
  \beta_{21}^2 &= 4\beta_{21}^2 - 4\lambda_{23} \beta_{31}^2 + \frac{3\lambda_{31}}{\lambda_{32}\lambda_{32}} \beta_{33}^3, \\
  \beta_{31}^2 &= -2\lambda_{32} \beta_{21}^2 - 5\beta_{31}^2 + \lambda_{21} \beta_{33}^3,  \\
  0 &= \lambda_{32} \lambda_{32} \beta_{21}^2 + 2\lambda_{32} \beta_{31}^2 + (\frac{3\lambda_{21}}{\lambda_{23}} - \lambda_{31}) \beta_{33}^3.
\end{align*}
We can suppose $\lambda_{23}=-3$, $\lambda_{32}=1$, $\lambda_{21}=0$, $\lambda_{31}= -1$. Then
$ \beta_{21}^2 = -3\beta_{33}^3,~ \beta_{31}^2 = \beta_{33}^3,~ \beta_{23}^2 = -3\beta_{33}^3,~ \beta_{32}^3 = \beta_{33}^3$.
Under the action of the automorphism
$$\Phi_{15}=\left(
  \begin{array}{ccc}
    1 & 0 & 0 \\
    0 & -2 & -3 \\
    -1 & 1 & 1 \\
  \end{array}
\right),
$$
$(A_3,\cdot,[\cdot,\cdot,\cdot])$ is isomorphic to
\begin{equation*}
  T_{15}':~ \left\{\begin{split}
   e_2 \cdot e_2 &= -3\beta_{33}^3 e_1 - 3\beta_{33}^3 e_3, \\
   e_2 \cdot e_3 &= \beta_{33}^3 e_1 - \beta_{33}^3 e_2, \\
   e_3 \cdot e_3 &= \beta_{33}^3 e_2 + \beta_{33}^3 e_3.
  \end{split}\right.
\end{equation*}

\textbf{(II-S2-d)}~ If $\beta_{21}^2 \neq 0$, $\beta_{31}^2 \neq 0$, $\beta_{31}^3 \neq 0$ and suppose $\lambda_{23}=-3$, $\lambda_{32}=1$, $\lambda_{21}= 0$, $\lambda_{31}= 0$, then by \eqref{eq:beta3.4} we have
$ \beta_{21}^2 = -2\beta_{31}^2,~ \beta_{31}^3 = -\frac{2}{3} \beta_{31}^2,~ \beta_{23}^2 = -3\beta_{33}^3,~ \beta_{32}^3 = \beta_{33}^3$.
Under the action of the automorphism
$$\Phi_{16}=\left(
  \begin{array}{ccc}
    1 & 0 & 0 \\
    0 & -2 & -3 \\
    0 & 1 & 1 \\
  \end{array}
\right),
$$
$(A_3,\cdot,[\cdot,\cdot,\cdot])$ is isomorphic to
\begin{equation*}
  T_{16}':~ \left\{\begin{split}
   e_2 \cdot e_2 &= -2\beta_{31}^2 e_1 - 3\beta_{33}^3 e_3, \\
   e_2 \cdot e_3 &= \beta_{31}^2 e_1 - \beta_{33}^3 e_2, \\
   e_3 \cdot e_3 &= -\frac{2}{3} \beta_{31}^2 e_1 + \beta_{33}^3 e_2 + \beta_{33}^3 e_3.
  \end{split}\right.
\end{equation*}

Thirdly, we discuss the \textbf{Case (III)}. By (v) $\sim$ (x) in Lemma \ref{lem:equations}, we have
\begin{equation}\label{eq:beta4.3}
  \left\{\begin{split}
  \beta_{22}^2 =& (\lambda_{22} \lambda_{22} \lambda_{33} +2\lambda_{22} \lambda_{23} \lambda_{32}) \beta_{22}^2 - \lambda_{22} \lambda_{22} \lambda_{32} \beta_{23}^2 - (2\lambda_{22}\lambda_{23}\lambda_{33} + \lambda_{23} \lambda_{23} \lambda_{32}) \beta_{33}^3 \\
  &+ \lambda_{23} \lambda_{23} \lambda_{33} \beta_{32}^3,\\
  \beta_{23}^2 =& -3\lambda_{22}\lambda_{22}\lambda_{23} \beta_{22}^2 + \lambda_{22}\lambda_{22}\lambda_{22}\beta_{23}^2 + 3\lambda_{22} \lambda_{23} \lambda_{23} \beta_{33}^3 -\lambda_{23} \lambda_{23} \lambda_{23} \beta_{32}^3,\\
  \beta_{32}^3 =& 3\lambda_{32} \lambda_{32} \lambda_{33} \beta_{22}^2 - \lambda_{32} \lambda_{32} \lambda_{32} \beta_{23}^2 - 3\lambda_{32} \lambda_{33} \lambda_{33} \beta_{33}^3 +\lambda_{33} \lambda_{33}\lambda_{33} \beta_{32}^3,\\
  \beta_{33}^3 =& -(\lambda_{23} \lambda_{32} \lambda_{32} +2\lambda_{22} \lambda_{32} \lambda_{33}) \beta_{22}^2 + \lambda_{22}\lambda_{32}\lambda_{32} \beta_{23}^2 + (2\lambda_{23} \lambda_{32} \lambda_{33} + \lambda_{22} \lambda_{33} \lambda_{33}) \beta_{33}^3 \\
  &- \lambda_{23} \lambda_{33}\lambda_{33} \beta_{32}^3.
  \end{split}\right.
\end{equation}
We discuss the following two subcases depending on whether $\beta_{23}^2 = 0$ or not.

\textbf{III-S1}:  If $\beta_{23}^2 = 0$, then \eqref{eq:beta4.3} implies
\begin{align*}
   \beta_{22}^2 =& (\lambda_{22} \lambda_{22} \lambda_{33} +2\lambda_{22} \lambda_{23} \lambda_{32}) \beta_{22}^2 - (2\lambda_{22}\lambda_{23}\lambda_{33} + \lambda_{23} \lambda_{23} \lambda_{32}) \beta_{33}^3 + \lambda_{23} \lambda_{23} \lambda_{33} \beta_{32}^3,\\
   0=& -3\lambda_{22}\lambda_{22}\lambda_{23} \beta_{22}^2 + 3\lambda_{22} \lambda_{23}\lambda_{23} \beta_{33}^3 -\lambda_{23} \lambda_{23} \lambda_{23} \beta_{32}^3,\\
   \beta_{32}^3 =& 3\lambda_{32} \lambda_{32} \lambda_{33} \beta_{22}^2 - 3\lambda_{32} \lambda_{33} \lambda_{33} \beta_{33}^3 +\lambda_{33} \lambda_{33}\lambda_{33} \beta_{32}^3,\\
   \beta_{33}^3 =& -(\lambda_{23} \lambda_{32} \lambda_{32} +2\lambda_{22} \lambda_{32} \lambda_{33}) \beta_{22}^2 + (2\lambda_{23} \lambda_{32} \lambda_{33} + \lambda_{22} \lambda_{33} \lambda_{33}) \beta_{33}^3 - \lambda_{23} \lambda_{33} \lambda_{33} \beta_{32}^3.
\end{align*}
Therefore, $\lambda_{22} + \lambda_{33} = -1$. We can suppose $\lambda_{22}=-2$ and $\lambda_{11} = 1$. Then we have $\lambda_{33}=1$, $\lambda_{23} \lambda_{32}=-3$, $\beta_{33}^3 = \lambda_{32} \beta_{22}^2$, $\beta_{32}^3 = \frac{2}{3} \lambda_{32} \lambda_{32} \beta_{22}^2$, and the equations (xi) $\sim$ (xiii) imply
\begin{equation}\label{eq:beta3.5}
  \left\{\begin{split}
  \beta_{21}^2 &= 4\beta_{21}^2 - \lambda_{21} \beta_{22}^2 -4\lambda_{23} \beta_{31}^2 + \lambda_{23} \lambda_{23} \beta_{31}^3,\\
  \beta_{31}^2 &= -2\lambda_{32} \beta_{21}^2 +(\lambda_{31} +\lambda_{21} \lambda_{32}) \beta_{22}^2 -5\beta_{31}^2 +\lambda_{23} \beta_{31}^3, \\
  \beta_{31}^3 &= \lambda_{32} \lambda_{32} \beta_{21}^2 + 2 \lambda_{32} \beta_{31}^2 - (\lambda_{31} \lambda_{32} + \frac{2}{3} \lambda_{21} \lambda_{32} \lambda_{32}) \beta_{22}^2 + \beta_{31}^3.
  \end{split}\right.
\end{equation}

Next, we distinguish the following four sub-subcases depending on whether $\beta_{21}^2, \beta_{31}^2, \beta_{31}^3 $ equal to zero or not.

\textbf{(III-S1-a)}~ If $\beta_{21}^2 = 0$, then by \eqref{eq:beta3.5} we have
\begin{align*}
  0 &= - \lambda_{21} \beta_{22}^2 -4\lambda_{23} \beta_{31}^2 + \lambda_{23} \lambda_{23} \beta_{31}^3, \\
  \beta_{31}^2 &= (\lambda_{31}+\lambda_{21}\lambda_{32}) \beta_{22}^2 -5\beta_{31}^2 +\lambda_{23} \beta_{31}^3, \\
  \beta_{31}^3 &= 2 \lambda_{32} \beta_{31}^2 - (\lambda_{31} \lambda_{32} + \frac{2}{3} \lambda_{21} \lambda_{32} \lambda_{32}) \beta_{22}^2 + \beta_{31}^3.
\end{align*}
We can suppose $\lambda_{23}=-1$, $\lambda_{32}= 3$, $\lambda_{21}=2$, $\lambda_{31}=-2$. Then $\beta_{31}^2 = \beta_{22}^2,~ \beta_{31}^3 = -2\beta_{22}^2$.
Under the action of the automorphism
$$\Phi_{17}=\left(
  \begin{array}{ccc}
    1 & 0 & 0 \\
    2 & -2 & -1 \\
    -2 & 3 & 1 \\
  \end{array}
\right),
$$
$(A_3,\cdot,[\cdot,\cdot,\cdot])$ is isomorphic to
\begin{equation*}
  T_{17}'(T_5):~ \left\{\begin{split}
   e_2 \cdot e_2 &= \beta_{22}^2 e_2, \\
   e_2 \cdot e_3 &= \beta_{22}^2 e_1 -3 \beta_{22}^2 e_2 -\beta_{22}^2 e_3, \\
   e_3 \cdot e_3 &= -2\beta_{22}^2 e_1 + 6\beta_{22}^2 e_2 + 3\beta_{22}^2 e_3.
  \end{split}\right.
\end{equation*}

\textbf{(III-S1-b)}~ If $\beta_{21}^2 \neq 0$, $\beta_{31}^2 = 0$, then by \eqref{eq:beta3.5} we have
\begin{align*}
  \beta_{21}^2 &= 4\beta_{21}^2 - \lambda_{21} \beta_{22}^2 + \lambda_{23} \lambda_{23} \beta_{31}^3,\\
  0 &= -2\lambda_{32} \beta_{21}^2 +(\lambda_{31} +\lambda_{21} \lambda_{32}) \beta_{22}^2 +\lambda_{23} \beta_{31}^3, \\
  \beta_{31}^3 &= \lambda_{32} \lambda_{32} \beta_{21}^2 - (\lambda_{31} \lambda_{32} + \frac{2}{3} \lambda_{21} \lambda_{32} \lambda_{32}) \beta_{22}^2 + \beta_{31}^3.
\end{align*}
We can suppose $\lambda_{23}= -1$, $\lambda_{32}= 3$, $\lambda_{21}=-3$, $\lambda_{31}=6$. Then $\beta_{21}^2 = 0,~ \beta_{31}^3 = -3\beta_{22}^2$.
Under the action of the automorphism
$$\Phi_{18}=\left(
  \begin{array}{ccc}
    1 & 0 & 0 \\
    -3 & -2 & -1 \\
    6 & 3 & 1 \\
  \end{array}
\right),
$$
$(A_3,\cdot,[\cdot,\cdot,\cdot])$ is isomorphic to
\begin{equation*}
  T_{18}':~ \left\{\begin{split}
   e_2 \cdot e_2 &= \beta_{22}^2 e_2,  \\
   e_2 \cdot e_3 &= -3\beta_{22}^2 e_2 -\beta_{22}^2 e_3, \\
   e_3 \cdot e_3 &= -3\beta_{22}^2 e_1 + 6\beta_{22}^2 e_2 + 3\beta_{22}^2 e_3.
  \end{split}\right.
\end{equation*}

\textbf{(III-S1-c)}~ If $\beta_{21}^2 \neq 0$, $\beta_{31}^2 \neq 0$, $\beta_{31}^3 = 0$, then by \eqref{eq:beta3.5}, we have
\begin{align*}
  \beta_{21}^2 &= 4\beta_{21}^2 - \lambda_{21} \beta_{22}^2 -4\lambda_{23} \beta_{31}^2,\\
  \beta_{31}^2 &= -2\lambda_{32} \beta_{21}^2 +(\lambda_{31} +\lambda_{21} \lambda_{32}) \beta_{22}^2 -5\beta_{31}^2, \\
  0 &= \lambda_{32} \lambda_{32} \beta_{21}^2 + 2 \lambda_{32} \beta_{31}^2 - (\lambda_{31} \lambda_{32} + \frac{2}{3} \lambda_{21} \lambda_{32} \lambda_{32}) \beta_{22}^2.
\end{align*}
We can suppose $\lambda_{23}= -1$, $\lambda_{32}=3$, $\lambda_{21}=-1$, $\lambda_{31}=3$. Then $\beta_{21}^2 = \beta_{22}^2,~ \beta_{31}^2 = -\beta_{22}^2$.
Under the action of the automorphism
$$\Phi_{19}=\left(
  \begin{array}{ccc}
    1 & 0 & 0 \\
    -1 & -2 & -1 \\
    3 & 3 & 1 \\
  \end{array}
\right),
$$
$(A_3,\cdot,[\cdot,\cdot,\cdot])$ is isomorphic to
\begin{equation*}
  T_{19}'(T_6):~ \left\{\begin{split}
   e_2 \cdot e_2 &= \beta_{22}^2 e_1 + \beta_{22}^2 e_2, \\
   e_2 \cdot e_3 &= -\beta_{22}^2 e_1 -3\beta_{22}^2 e_2 -\beta_{22}^2 e_3, \\
   e_3 \cdot e_3 &= 6\beta_{22}^2 e_2 + 3\beta_{22}^2 e_3.
  \end{split}\right.
\end{equation*}

\textbf{(III-S1-d)}~ If $\beta_{21}^2 \neq 0$, $\beta_{31}^2 \neq 0$, $\beta_{31}^3 \neq 0$ and suppose $\lambda_{23}= -1$, $\lambda_{32}=3$, $\lambda_{21}=0$, $\lambda_{31}=0$, then by \eqref{eq:beta3.5} we have $\beta_{31}^2 = -\frac{3}{2}\beta_{21}^2,~ \beta_{31}^3 = 3\beta_{21}^2$.
Under the action of the automorphism
$$\Phi_{20}=\left(
  \begin{array}{ccc}
    1 & 0 & 0 \\
    0 & -2 & -1 \\
    0 & 3 & 1 \\
  \end{array}
\right),
$$
$(A_3,\cdot,[\cdot,\cdot,\cdot])$ is isomorphic to
\begin{equation*}
  T_{20}':~ \left\{\begin{split}
   e_2 \cdot e_2 &= \beta_{21}^2 e_1 + \beta_{22}^2 e_2, \\
   e_2 \cdot e_3 &= -\frac{3}{2} \beta_{21}^2 e_1 -3\beta_{22}^2 e_2 -\beta_{22}^2 e_3, \\
   e_3 \cdot e_3 &= 3\beta_{21}^2 e_1 + 6\beta_{22}^2 e_2 + 3\beta_{22}^2 e_3.
  \end{split}\right.
\end{equation*}
%Hence, we obtain the classification (5) in Theorem \ref{classification}.

\textbf{III-S2}: If $\beta_{23}^2 \neq 0$, then we have \eqref{eq:beta4.3}.
If $\beta_{32}^3 = 0$, then we get $\lambda_{22}=0$ or $\lambda_{32}=0$, which contradicts with $\lambda_{22} \neq 0$ or $\lambda_{32} \neq 0$. So, $\beta_{32}^3 \neq 0$. By \eqref{eq:beta4.3}, we have $\lambda_{22} + \lambda_{33} = -1$. We can suppose $\lambda_{22}=-2$. Then  $\lambda_{33}=1$, $\lambda_{23} \lambda_{32}=-3$, and \eqref{eq:beta4.3} implies
\begin{align*}
   \beta_{22}^2 =& 16\beta_{22}^2 - 4\lambda_{32} \beta_{23}^2 + 7\lambda_{23} \beta_{33}^3 + \lambda_{23} \lambda_{23} \beta_{32}^3,\\
   \beta_{23}^2 =& -12\beta_{22}^2 -8\beta_{23}^2 - 6\lambda_{23}\lambda_{23} \beta_{33}^3 -\lambda_{23}\lambda_{23}\lambda_{23}\beta_{32}^3,\\
   \beta_{32}^3 =& 3\lambda_{32} \lambda_{32} \beta_{22}^2 - \lambda_{32} \lambda_{32} \lambda_{32} \beta_{23}^2 - 3\lambda_{32} \beta_{33}^3 +\beta_{32}^3,\\
   \beta_{33}^3 =& 7\lambda_{32} \beta_{22}^2 -2\lambda_{32}\lambda_{32} \beta_{23}^2 - 8\beta_{33}^3 - \lambda_{23} \beta_{32}^3.
\end{align*}
Therefore, we have $\beta_{33}^3 = \lambda_{32} \beta_{22}^2 -\frac{1}{3} \lambda_{32} \lambda_{32} \beta_{23}^2$, $\beta_{32}^3 = \frac{2}{3} \lambda_{32} \lambda_{32} \beta_{22}^2 -\frac{1}{3} \lambda_{32} \lambda_{32} \lambda_{32} \beta_{23}^2$.
We can suppose $\lambda_{11} =1 $. Then equations (xi) $\sim$ (xiii) imply
\begin{equation}\label{eq:beta3.6}
  \left\{\begin{split}
  \beta_{21}^2 &= 4\beta_{21}^2 - \lambda_{21} \beta_{22}^2 -\lambda_{31} \beta_{23}^2 -4\lambda_{23} \beta_{31}^2 + \lambda_{23} \lambda_{23} \beta_{31}^3,\\
  \beta_{31}^2 &= -2\lambda_{32} \beta_{21}^2 +(\lambda_{31} +\lambda_{21} \lambda_{32}) \beta_{22}^2 -5\beta_{31}^2 +\lambda_{23} \beta_{31}^3, \\
  \beta_{31}^3 &= \lambda_{32} \lambda_{32} \beta_{21}^2 + 2 \lambda_{32} \beta_{31}^2 - (\lambda_{31} \lambda_{32} + \frac{2}{3} \lambda_{21} \lambda_{32} \lambda_{32}) \beta_{22}^2 + \beta_{31}^3.
  \end{split}\right.
\end{equation}

Next, we distinguish the following four sub-subcases depending on whether $\beta_{21}^2, \beta_{31}^2, \beta_{31}^3 $ equal to zero or not.

\textbf{(III-S2-a)}~ If $\beta_{21}^2 = 0$, then by \eqref{eq:beta3.6} we have
\begin{align*}
  0 &= - \lambda_{21} \beta_{22}^2 -\lambda_{31} \beta_{23}^2 -4\lambda_{23} \beta_{31}^2 + \lambda_{23} \lambda_{23} \beta_{31}^3, \\
  \beta_{31}^2 &= (\lambda_{31}+\lambda_{21}\lambda_{32}) \beta_{22}^2 -5\beta_{31}^2 +\lambda_{23} \beta_{31}^3, \\
  \beta_{31}^3 &= 2 \lambda_{32} \beta_{31}^2 - (\lambda_{31} \lambda_{32} + \frac{2}{3} \lambda_{21} \lambda_{32} \lambda_{32}) \beta_{22}^2 + \beta_{31}^3.
\end{align*}
We can suppose $\lambda_{23}=-1$, $\lambda_{32}= 3$, $\lambda_{21}=2$, $\lambda_{31}=-2$. Then $\beta_{31}^2 = \beta_{22}^2 -2\beta_{23}^2,~ \beta_{31}^3 = -2\beta_{22}^2 +6\beta_{23}^2$.
Under the action of the automorphism
$$\Phi_{21}=\left(
  \begin{array}{ccc}
    1 & 0 & 0 \\
    2 & -2 & -1 \\
    -2 & 3 & 1 \\
  \end{array}
\right),
$$
$(A_3,\cdot,[\cdot,\cdot,\cdot])$ is isomorphic to
\begin{equation*}
  T_{21}':~ \left\{\begin{split}
   e_2 \cdot e_2 &= \beta_{22}^2 e_2 + \beta_{23}^2 e_3, \\
   e_2 \cdot e_3 &= (\beta_{22}^2 -2\beta_{23}^2) e_1 + (3\beta_{23}^2 -3 \beta_{22}^2) e_2 -\beta_{22}^2 e_3, \\
   e_3 \cdot e_3 &= (6\beta_{23}^2 -2\beta_{22}^2) e_1 + (6\beta_{22}^2 -9\beta_{23}^2) e_2 + (3\beta_{22}^2 -3\beta_{23}^2) e_3.
  \end{split}\right.
\end{equation*}

\textbf{(III-S2-b)}~ If $\beta_{21}^2 \neq 0$, $\beta_{31}^2 = 0$, then by \eqref{eq:beta3.6} we have
\begin{align*}
  \beta_{21}^2 &= 4\beta_{21}^2 - \lambda_{21} \beta_{22}^2 -\lambda_{31} \beta_{23}^2 + \lambda_{23} \lambda_{23} \beta_{31}^3,\\
  0 &= -2\lambda_{32} \beta_{21}^2 +(\lambda_{31} +\lambda_{21} \lambda_{32}) \beta_{22}^2 +\lambda_{23} \beta_{31}^3, \\
  \beta_{31}^3 &= \lambda_{32} \lambda_{32} \beta_{21}^2 - (\lambda_{31} \lambda_{32} + \frac{2}{3} \lambda_{21} \lambda_{32} \lambda_{32}) \beta_{22}^2 + \beta_{31}^3.
\end{align*}
We can suppose $\lambda_{23}= -1$, $\lambda_{32}= 3$, $\lambda_{21}=-3$, $\lambda_{31}=6$. Then $\beta_{21}^2 = \beta_{23}^2,~ \beta_{31}^3 = -3\beta_{22}^2 +3\beta_{23}^2$.
Under the action of the automorphism
$$\Phi_{22}=\left(
  \begin{array}{ccc}
    1 & 0 & 0 \\
    -3 & -2 & -1 \\
    6 & 3 & 1 \\
  \end{array}
\right),
$$
$(A_3,\cdot,[\cdot,\cdot,\cdot])$ is isomorphic to
\begin{equation*}
  T_{22}'(T_7):~ \left\{\begin{split}
   e_2 \cdot e_2 &= \beta_{23}^2 e_1 +\beta_{22}^2 e_2 + \beta_{23}^2 e_3,  \\
   e_2 \cdot e_3 &= (3\beta_{23}^2 -3\beta_{22}^2) e_2 -\beta_{22}^2 e_3, \\
   e_3 \cdot e_3 &= (3\beta_{23}^2 -3\beta_{22}^2) e_1 + (6\beta_{22}^2 -9\beta_{23}^2) e_2 + (3\beta_{22}^2 -3\beta_{23}^2) e_3.
  \end{split}\right.
\end{equation*}

\textbf{(III-S2-c)}~ If $\beta_{21}^2 \neq 0$, $\beta_{31}^2 \neq 0$, $\beta_{31}^3 = 0$, then by \eqref{eq:beta3.6} we have
\begin{align*}
  \beta_{21}^2 &= 4\beta_{21}^2 - \lambda_{21} \beta_{22}^2 -\lambda_{31} \beta_{23}^2 -4\lambda_{23} \beta_{31}^2,\\
  \beta_{31}^2 &= -2\lambda_{32} \beta_{21}^2 +(\lambda_{31} +\lambda_{21} \lambda_{32}) \beta_{22}^2 -5\beta_{31}^2, \\
  0 &= \lambda_{32} \lambda_{32} \beta_{21}^2 + 2 \lambda_{32} \beta_{31}^2 - (\lambda_{31} \lambda_{32} + \frac{2}{3} \lambda_{21} \lambda_{32} \lambda_{32}) \beta_{22}^2.
\end{align*}
We can suppose $\lambda_{23}= -1$, $\lambda_{32}=3$, $\lambda_{21}=-1$, $\lambda_{31}=3$. Then $\beta_{21}^2 = \beta_{22}^2 -\beta_{23}^2,~ \beta_{31}^2 = -\beta_{22}^2 +\frac{3}{2}\beta_{23}^2$.
Under the action of the automorphism
$$\Phi_{23}=\left(
  \begin{array}{ccc}
    1 & 0 & 0 \\
    -1 & -2 & -1 \\
    3 & 3 & 1 \\
  \end{array}
\right),
$$
$(A_3,\cdot,[\cdot,\cdot,\cdot])$ is isomorphic to
\begin{equation*}
  T_{23}'(T_8):~ \left\{\begin{split}
   e_2 \cdot e_2 &= (\beta_{22}^2 -\beta_{23}^2) e_1 + \beta_{22}^2 e_2 +\beta_{23}^2 e_3, \\
   e_2 \cdot e_3 &= (-\beta_{22}^2 +\frac{3}{2}\beta_{23}^2) e_1 +(3\beta_{23}^2 -3\beta_{22}^2) e_2 -\beta_{22}^2 e_3, \\
   e_3 \cdot e_3 &= (6\beta_{22}^2 -9\beta_{23}^2) e_2 + (3\beta_{22}^2 -3\beta_{23}^2) e_3.
  \end{split}\right.
\end{equation*}

\textbf{(III-S2-d)}~ If $\beta_{21}^2 \neq 0$, $\beta_{31}^2 \neq 0$, $\beta_{31}^3 \neq 0$ and suppose $\lambda_{23}= -1$, $\lambda_{32}=3$, $\lambda_{21}=0$, $\lambda_{31}=0$, then by \eqref{eq:beta3.6} we have $\beta_{31}^2 = -\frac{3}{2}\beta_{21}^2,~ \beta_{31}^3 = 3\beta_{21}^2$.
Under the action of the automorphism
$$\Phi_{24}=\left(
  \begin{array}{ccc}
    1 & 0 & 0 \\
    0 & -2 & -1 \\
    0 & 3 & 1 \\
  \end{array}
\right),
$$
$(A_3,\cdot,[\cdot,\cdot,\cdot])$ is isomorphic to
\begin{equation*}
  T_{24}'(T_9):~ \left\{\begin{split}
   e_2 \cdot e_2 &= \beta_{21}^2 e_1 + \beta_{22}^2 e_2 +\beta_{23}^2 e_3, \\
   e_2 \cdot e_3 &= -\frac{3}{2} \beta_{21}^2 e_1 + (3\beta_{23}^2 -3\beta_{22}^2) e_2 -\beta_{22}^2 e_3, \\
   e_3 \cdot e_3 &= 3\beta_{21}^2 e_1 + (6\beta_{22}^2 -9\beta_{23}^2) e_2 + (3\beta_{22}^2 -3\beta_{23}^2) e_3. \\
  \end{split}\right.
\end{equation*}
%Hence, we obtain the classification (6) in Theorem \ref{classification}.

Finally, we discuss the \textbf{Case (IV)}. By (v) $\sim$ (x) in Lemma \ref{lem:equations}, we have
\begin{align*}
  0 =& - \lambda_{22} \lambda_{22} \lambda_{32} \beta_{23}^2 + \lambda_{23} \lambda_{23} \lambda_{33} \beta_{32}^3,\\
  \beta_{23}^2 =& \lambda_{22}\lambda_{22}\lambda_{22}\beta_{23}^2 -\lambda_{23} \lambda_{23} \lambda_{23} \beta_{32}^3,\\
  \beta_{32}^3 =& -\lambda_{32} \lambda_{32} \lambda_{32} \beta_{23}^2 +\lambda_{33} \lambda_{33}\lambda_{33} \beta_{32}^3,\\
  0 =& \lambda_{22}\lambda_{32}\lambda_{32} \beta_{23}^2 - \lambda_{23} \lambda_{33}\lambda_{33} \beta_{32}^3.
\end{align*}
Therefore, $\beta_{23}^2=\beta_{32}^3=0$. We can suppose $\lambda_{11}=1$. Then by (xi) $\sim$ (xiii) in Lemma \ref{lem:equations}, we get
\begin{equation}\label{eq:beta3.7}
  \left\{\begin{split}
  \beta_{21}^2 &= \lambda_{22} \lambda_{22}\beta_{21}^2 + 2\lambda_{22} \lambda_{23} \beta_{31}^2 + \lambda_{23} \lambda_{23} \beta_{31}^3, \\
  \beta_{31}^2 &= \lambda_{22} \lambda_{32} \beta_{21}^2 +(\lambda_{22}\lambda_{33}+ \lambda_{23} \lambda_{32})\beta_{31}^2+\lambda_{23}\lambda_{33} \beta_{31}^3,  \\
  \beta_{31}^3 &= \lambda_{32} \lambda_{32} \beta_{21}^2 + 2 \lambda_{32} \lambda_{33} \beta_{31}^2 + \lambda_{33} \lambda_{33} \beta_{31}^3.
  \end{split}\right.
\end{equation}

Next, we distinguish the following four subcases depending on whether $\beta_{21}^2, \beta_{31}^2, \beta_{31}^3$ equal to zero or not.

\textbf{IV-S1}: If $\beta_{21}^2=0$, then by \eqref{eq:beta3.7} we have
\begin{align*}
  0 &= 2\lambda_{22} \lambda_{23} \beta_{31}^2 + \lambda_{23} \lambda_{23} \beta_{31}^3, \\
  \beta_{31}^2 &= (\lambda_{22}\lambda_{33}+ \lambda_{23} \lambda_{32})\beta_{31}^2 +\lambda_{23}\lambda_{33} \beta_{31}^3,  \\
  \beta_{31}^3 &= 2\lambda_{32} \lambda_{33} \beta_{31}^2 + \lambda_{33} \lambda_{33} \beta_{31}^3.
\end{align*}
which leads to  a contradiction.

\textbf{IV-S2}: If $\beta_{21}^2 \neq 0$, $\beta_{31}^2=0$, then by \eqref{eq:beta3.7} we have
\begin{align*}
  \beta_{21}^2 &= \lambda_{22} \lambda_{22}\beta_{21}^2 + \lambda_{23} \lambda_{23} \beta_{31}^3, \\
  0 &= \lambda_{22}\lambda_{32} \beta_{21}^2 +\lambda_{23}\lambda_{33} \beta_{31}^3, \\
  \beta_{31}^3 &= \lambda_{32} \lambda_{32} \beta_{21}^2 + \lambda_{33} \lambda_{33} \beta_{31}^3.
\end{align*}
Therefore, $\lambda_{22}=\lambda_{33}$. We can suppose $\lambda_{22}=\lambda_{33} =2$, $\lambda_{23}=1$, $\lambda_{32}=3$, $\lambda_{21}=\lambda_{31} =0$. Then $\beta_{31}^3= -3\beta_{21}^2$. Under the action of the automorphism
$$\Phi_{25}=\left(
  \begin{array}{ccc}
    1 & 0 & 0 \\
    0 & 2 & 1 \\
    0 & 3 & 2 \\
  \end{array}
\right),
$$
$(A_3,\cdot,[\cdot,\cdot,\cdot])$ is isomorphic to
\begin{equation*}
  T_{25}'(T_{10}):~ \left\{\begin{split}
   e_2 \cdot e_2 &= \beta_{21}^2 e_1,  \\
   e_3 \cdot e_3 &= -3\beta_{21}^2 e_1.
  \end{split}\right.
\end{equation*}

\textbf{IV-S3}: If $\beta_{21}^2 \neq 0$, $\beta_{31}^2 \neq 0$, $\beta_{31}^3 =0$, then by \eqref{eq:beta3.7} we have
\begin{align*}
  \beta_{21}^2 &= \lambda_{22} \lambda_{22}\beta_{21}^2 + 2\lambda_{22} \lambda_{23} \beta_{31}^2, \\
  \beta_{31}^2 &= \lambda_{22} \lambda_{32} \beta_{21}^2 +(\lambda_{22}\lambda_{33}+ \lambda_{23} \lambda_{32})\beta_{31}^2,  \\
  0 &= \lambda_{32} \lambda_{32} \beta_{21}^2 + 2 \lambda_{32} \lambda_{33} \beta_{31}^2.
\end{align*}
Therefore, $\lambda_{32}=0$, which contradicts with $\lambda_{32} \neq 0$.

\textbf{IV-S4}: If $\beta_{21}^2 \neq 0$, $\beta_{31}^2 \neq 0$, $\beta_{31}^3 \neq 0$ and suppose $\lambda_{22}=1$, $\lambda_{33} =3$, $\lambda_{23}=1$, $\lambda_{32}=2$, $\lambda_{21}=\lambda_{31} =0$, then by \eqref{eq:beta3.7} we have $\beta_{31}^2= \beta_{21}^2$, $\beta_{31}^3= -2\beta_{21}^2$. Under the action of the automorphism
$$\Phi_{26}=\left(
  \begin{array}{ccc}
    1 & 0 & 0 \\
    0 & 1 & 1 \\
    0 & 2 & 3 \\
  \end{array}
\right),
$$
$(A_3,\cdot,[\cdot,\cdot,\cdot])$ is isomorphic to
\begin{equation*}
  T_{26}':~ \left\{\begin{split}
   e_2 \cdot e_2 &= \beta_{21}^2 e_1,  \\
   e_2 \cdot e_3 &= \beta_{21}^2 e_1,  \\
   e_3 \cdot e_3 &= -2\beta_{21}^2 e_1.
  \end{split}\right.
\end{equation*}
%Hence, we obtain the classification (7) in Theorem \ref{classification}.

\subsection{Isomorphic cases of $T_i'$}
One checks that all the above prescriptions for $T_i', 1\leq i \leq 26$ are transposed Poisson $3$-Lie algebras.

\textbf{(1)} If the isomorphism between $T_1'(T_1)$ and $T_9'$ is given by
\begin{equation*}
\left(
  \begin{array}{ccc}
    1 & 0 & 0 \\
    0 & -\frac{1}{2} & -\frac{1}{2} \\
    0 &  \frac{1}{2} &  -\frac{3}{2} \\
  \end{array}
\right)\in \Phi,
\end{equation*}
then $T_1'(T_1)\cong T_9'$.

\textbf{(2)}
If the isomorphisms between $T_2'(T_2)$ and $T_3', T_4',  T_{10}', T_{11}', T_{12}'$ are given by the following matrices respectively:
\begin{equation*}
\left(
  \begin{array}{ccc}
    -\frac{\alpha}{\theta} & 0 & 0 \\
    1 & -\frac{1}{2} & -\frac{1}{2} \\
    -1 &  \frac{3}{2} &  -\frac{1}{2} \\
  \end{array}
\right),\quad \left(
  \begin{array}{ccc}
    \frac{3\theta+\alpha}{3\theta} & 0 & 0 \\
    \frac{2}{3} & -\frac{1}{2} & -\frac{1}{2} \\
    0 &  \frac{3}{2} &  -\frac{1}{2} \\
  \end{array}
\right),\quad \left(
  \begin{array}{ccc}
    1 & 0 & 0 \\
    0 & \frac{1}{2} & -\frac{1}{2} \\
    0 &  \frac{1}{2} &  \frac{3}{2} \\
  \end{array}
\right),
\end{equation*}
\begin{equation*}
 \left(
  \begin{array}{ccc}
    \frac{2\alpha}{3\theta} & 0 & 0 \\
    -\frac{2}{3} & \frac{1}{2} & -\frac{1}{2} \\
    -2 &  \frac{1}{2} &  \frac{3}{2} \\
  \end{array}
\right),\quad\left(
  \begin{array}{ccc}
    1 & 0 & 0 \\
    -1 & \frac{1}{2} & -\frac{1}{2} \\
    \frac{1}{2} &  \frac{1}{2} &  \frac{3}{2} \\
  \end{array}
\right)\in \Phi,\quad
\end{equation*}
where for all $\alpha,\theta \in \mathbb{C},\alpha\neq-3\theta, \alpha,\theta\neq 0$, then $T_2'(T_2)\cong T_3'\cong T_4'\cong T_{10}'\cong T_{11}'\cong T_{12}'$.

\textbf{(3)}
If the isomorphism between $T_5'(T_3)$ and $T_6'$ is given by
\begin{equation*}
\left(
  \begin{array}{ccc}
    1 & 0 & 0 \\
    1 & 1 & 0 \\
    0 & 0 & 1 \\
  \end{array}
\right) \in \Phi,
\end{equation*}
 then $T_5'(T_3)\cong T_6'$. If the isomorphisms between $T_5'(T_3)$ and $T_{13}'$, $T_{15}'$ are given by the following matrices respectively:
\begin{equation*}
 \left(
  \begin{array}{ccc}
    1 & 0 & 0 \\
    0 & -\frac{\lambda_{33}}{\lambda_{33}^2-3} & -\frac{\lambda_{33}^2+6\lambda_{33}+6}{\lambda_{33}^2 +3\lambda_{33}+3} \\
    0 & \frac{3\lambda_{33}+3}{\lambda_{33}^2-3} & \lambda_{33} \\
  \end{array}
\right),\quad
\left(
  \begin{array}{ccc}
    -\frac{2}{3} & 0 & 0 \\
    \frac{2\lambda_{33}^2+8\lambda_{33}+7}{\lambda_{33}^2+3\lambda_{33}+3} & -\frac{\lambda_{33}}{\lambda_{33}^2-3} & -\frac{\lambda_{33}^2+6\lambda_{33}+6}{\lambda_{33}^2 +3\lambda_{33}+3} \\
    -\frac{11\lambda_{33}^2 +36\lambda_{33}+24}{3\lambda_{33}^2+9\lambda_{33}+9} & \frac{3\lambda_{33}+3}{\lambda_{33}^2-3} & \lambda_{33} \\
  \end{array}
\right)
\in \Phi,
\end{equation*}
  where $\lambda_{33}=\sqrt[3]{\frac{9}{2}+\frac{3\sqrt{3}}{2}\sqrt{-1}} +\sqrt[3]{\frac{9}{2}-\frac{3\sqrt{3}}{2}\sqrt{-1}}$, then $T_5'(T_3)\cong T_{13}'\cong T_{15}'$.

\textbf{(4)}
If the isomorphism between $T_7'(T_4)$ and $T_8'$ is given by
\begin{equation*}
\left(
  \begin{array}{ccc}
    \frac{\theta}{\alpha} & 0 & 0 \\
    -\frac{\theta}{\alpha} & 1 & 0 \\
    \frac{\theta}{\alpha} & 0 & 1 \\
  \end{array}
\right)\in \Phi,\quad \forall~ \theta,\alpha\in \mathbb{C}, \theta,\alpha\neq 0,
\end{equation*}
then $T_7'(T_4)\cong T_8'$. If the isomorphism between $T_7'(T_4)$ and $T_{14}', T_{16}'$ are given by
\begin{equation*}
 \left(
  \begin{array}{ccc}
    -\frac{2}{3} & 0 & 0 \\
    \frac{2\lambda_{33}^2+8\lambda_{33}+7}{\lambda_{33}^2+3\lambda_{33}+3} & -\frac{\lambda_{33}}{\lambda_{33}^2-3} & -\frac{\lambda_{33}^2+6\lambda_{33}+6}{\lambda_{33}^2 +3\lambda_{33}+3} \\
    -\frac{11\lambda_{33}^2 +36\lambda_{33}+24}{3\lambda_{33}^2+9\lambda_{33}+9} & \frac{3\lambda_{33}+3}{\lambda_{33}^2-3} & \lambda_{33} \\
  \end{array}
\right),\quad
\left(
  \begin{array}{ccc}
    -\frac{2\theta}{3\alpha} & 0 & 0 \\
    \frac{2\theta}{3\alpha} & -\frac{\lambda_{33}}{\lambda_{33}^2-3} & -\frac{\lambda_{33}^2+6\lambda_{33}+6}{\lambda_{33}^2 +3\lambda_{33}+3} \\
    -\frac{2\theta}{3\alpha} &\frac{3\lambda_{33}+3}{\lambda_{33}^2-3} & \lambda_{33}\\
  \end{array}
\right)
\in \Phi,
\end{equation*}
  where $\lambda_{33}=\sqrt[3]{\frac{9}{2}+\frac{3\sqrt{3}}{2}\sqrt{-1}} +\sqrt[3]{\frac{9}{2}-\frac{3\sqrt{3}}{2}\sqrt{-1}}$, $\alpha,\theta\in \mathbb{C}, \theta,\alpha\neq 0$, then $T_7'(T_4)\cong T_{14}'\cong T_{16}'$.

\textbf{(5)}
If the isomorphisms between $T_{17}'(T_5)$ and $T_{18}', T_{20}'$, $T_{21}'$ are given by the following matrices respectively:
\begin{equation*}
\left(
  \begin{array}{ccc}
    -3 & 0 & 0 \\
    -3 & -2 & -1 \\
    3 &  3 & 1 \\
  \end{array}
\right),\quad \left(
  %\begin{array}{ccc}
%    0 & 0 & 0 \\
%    0 & -2 & -1 \\
%    1 & 3 & 1 \\
%  \end{array}
%\right),\quad \left(
  \begin{array}{ccc}
    \frac{3\theta}{2\alpha} & 0 & 0 \\
    \frac{\theta}{\alpha} & 1 & 1 \\
    0 &  -3 &  -2 \\
  \end{array}
\right),\quad \left(
  \begin{array}{ccc}
    1 & 0 & 0 \\
    -\frac{6\lambda_{22}+2}{(2\lambda_{22}+1)^2} & \lambda_{22} & \frac{\lambda_{22}^2}{2\lambda_{22}+1} \\
    \frac{2\lambda_{22}^2+12\lambda_{22}+4}{(2\lambda_{22}+1)^2} &  -\frac{21\lambda_{22}^2+33\lambda_{22}+9}{8\lambda_{22}^2+17\lambda_{22}+5} &  \frac{\lambda_{22}-\lambda_{22}^2}{2\lambda_{22}+1} \\
  \end{array}
\right)\in \Phi,
\end{equation*}
where $\lambda_{22}=\sqrt[3]{\frac{3}{2}+\frac{\sqrt{5}}{2}} +\sqrt[3]{\frac{3}{2}-\frac{\sqrt{5}}{2}}$, $\alpha,\theta \in \mathbb{C}, \alpha,\theta\neq 0$, then $T_{17}'(T_5)\cong T_{18}'\cong T_{20}'\cong T_{21}'$.

\textbf{(6)} If the isomorphisms between $T_{25}'(T_{10})$ and $T_{26}'$ is given by
\begin{equation*}
\left(
  \begin{array}{ccc}
    1 & 0 & 0 \\
    1 & 1 & 0 \\
    0 & -1 &  1 \\
  \end{array}
\right)\in \Phi,
\end{equation*}
then $T_{25}'(T_{10})\cong T_{26}'$.

Therefore, we have
\begin{align*}
& T_1'(T_1)\cong T_9'; \quad T_2'(T_2)\cong T_3'\cong T_4'\cong T_{10}'\cong T_{11}'\cong T_{12}'; \quad T_5'(T_3)\cong T_6'\cong T_{13}'\cong T_{15}'; \\
& T_7'(T_4)\cong T_8'\cong T_{14}'\cong T_{16}'; \quad T_{17}'(T_5)\cong T_{18}'\cong T_{20}'\cong T_{21}'; \quad  T_{25}'(T_{10}) \cong T_{26}'.
\end{align*}
\subsection{Non-isomorphic cases of $T_i'$}
Now we separate all non-isomorphic cases of $T_i', 1\leq i \leq 26$.

\textbf{(1)}
First, we prove that $T_1'(T_1)$ does not isomorphic to $T_2'(T_2)$, $T_5'(T_3)$, $T_7'(T_4)$, $T_{17}'(T_5)$ and $T_{25}'(T_{10})$.  In fact, if $\phi_{1}:T_{1}' \rightarrow T_{2}'$ is the isomorphism defined by \eqref{eq:phi-lamdae}, then by \eqref{eq:y-iso} we have
\begin{subequations} \label{eqn-3}
  \begin{numcases}{}
  \lambda_{22} =\lambda_{22}\lambda_{22} -3\lambda_{23}\lambda_{23}, \label{eqn-3-1} \\
  \lambda_{23} =-2\lambda_{22}\lambda_{23}, \label{eqn-3-2} \\
  -\lambda_{32} =\lambda_{22}\lambda_{32} -3\lambda_{23}\lambda_{33}, \label{eqn-3-3} \\
  \lambda_{33} =\lambda_{22}\lambda_{33} + \lambda_{23}\lambda_{32}, \label{eqn-3-4} \\
  3\lambda_{22} =3\lambda_{33}\lambda_{33} -\lambda_{32}\lambda_{32}, \label{eqn-3-5} \\
  3\lambda_{23} = 2\lambda_{32}\lambda_{33}, \label{eqn-3-6} \\
  \alpha\lambda_{21} = \theta(\lambda_{22}\lambda_{22} +3\lambda_{23}\lambda_{23}), \label{eqn-3-7} \\
  -\alpha\lambda_{31} = \theta(\lambda_{22}\lambda_{32} +3\lambda_{23}\lambda_{33}), \label{eqn-3-8} \\
  -3\alpha\lambda_{21} = \theta(\lambda_{32}\lambda_{32} +3\lambda_{33}\lambda_{33}). \label{eqn-3-9}
  \end{numcases}
\end{subequations}

If $\lambda_{23}=0$, then by \eqref{eqn-3-1} we have $\lambda_{22}=1$ or $\lambda_{22}=0$. Since $\lambda_{22}\lambda_{33} -\lambda_{23}\lambda_{32}=1$, we get $\lambda_{22}=1$. Substituting $\lambda_{22}$ and $\lambda_{23}$ into \eqref{eqn-3-3} we have $\lambda_{32}=0$. Substituting $\lambda_{22}$, $\lambda_{23}$ and $\lambda_{32}$ into \eqref{eqn-3-5} we have $\lambda_{33}=1$ or $\lambda_{33}=-1$. However, if $\lambda_{33}=-1$, then it contradicts with $\lambda_{22}\lambda_{33} -\lambda_{23}\lambda_{32}=1$. Hence, $\lambda_{33}=1$. Substituting $\lambda_{22}$, $\lambda_{23}$, $\lambda_{32}$ and $\lambda_{33}$ into \eqref{eqn-3-7} and \eqref{eqn-3-9} we have $\lambda_{21}=\frac{\theta}{\alpha}$ or $\lambda_{21}=-\frac{\theta}{\alpha}$, where $\alpha, \theta\neq 0$. This leads to a contradictory result.

If $\lambda_{23}\neq0$, then by \eqref{eqn-3-2} and \eqref{eqn-3-1} we have $\lambda_{22}=-\frac{1}{2}$, $\lambda_{23}\lambda_{23} =\frac{1}{4}$. By \eqref{eqn-3-7} and \eqref{eqn-3-9} we  obtain $\alpha\lambda_{21} =\theta$, $\lambda_{32}\lambda_{32} +3\lambda_{33}\lambda_{33} = -3$. Substituting $\lambda_{22}=-\frac{1}{2}$ into \eqref{eqn-3-5}, we have $3\lambda_{33}\lambda_{33} -\lambda_{32}\lambda_{32}=-\frac{3}{2}$. Then $ 6\lambda_{33}\lambda_{33}= -\frac{9}{2}$, $2\lambda_{32} \lambda_{32}=-\frac{3}{2}$, which does not satisfy $\lambda_{22}\lambda_{33} -\lambda_{23}\lambda_{32} =1$.  Hence $T_1'\ncong T_2'$.
%that is, $T_1'\ncong T_4',~T_1'\ncong T_{10}',~T_1'\ncong T_{11}',~T_1'\ncong T_{12}'$.

%By \eqref{eqn-4-2}, we have $\lambda_{22}=-\frac{1}{2}$, then take it into \eqref{eqn-4-1} and \eqref{eqn-4-4}, we have $\lambda_{23}\lambda_{23}=\frac{9}{16}$, $\lambda_{33} = -\lambda_{33} -1$, that is, $\lambda_{33}=-\frac{1}{2}$, then take $\lambda_{22},~\lambda_{33}$ into \eqref{eqn-4-5}, we have $\frac{3}{2} = \lambda_{32} \lambda_{32} -\frac{1}{3}$, that is, $\lambda_{32}\lambda_{32}= \frac{11}{6}$, then the values of $\lambda_{22}$, $\lambda_{23}$, $\lambda_{32}$ and $\lambda_{33}$ are contradict to $\lambda_{22}\lambda_{33} -\lambda_{23}\lambda_{32} =1$.

%Now we prove that $T_1'$ does not isomorphic to $T_5'$.
Let $\phi_{2}:T_1' \rightarrow T_5'$ be the isomorphism given by \eqref{eq:phi-lamdae}. Then by \eqref{eq:y-iso} we have
\begin{subequations}\label{eqn-1}
  \begin{numcases}{}
  \lambda_{22} = \lambda_{22} \lambda_{22} -3\lambda_{23} \lambda_{23}, \label{eqn-1-1} \\
  \lambda_{23} = \lambda_{22} \lambda_{22} -2\lambda_{22} \lambda_{23}, \label{eqn-1-2} \\
  -\lambda_{32} = \lambda_{22} \lambda_{32} -3\lambda_{23} \lambda_{33}, \label{eqn-1-3} \\
  -\lambda_{33} = \lambda_{22} \lambda_{32} -(2\lambda_{22} \lambda_{33} -1), \label{eqn-1-4} \\
  -3\lambda_{22} = \lambda_{32} \lambda_{32} -3\lambda_{33}\lambda_{33}, \label{eqn-1-5} \\
  -3\lambda_{23} = \lambda_{32} \lambda_{32} -2\lambda_{32}\lambda_{33}, \label{eqn-1-6} \\
  \lambda_{21} = \lambda_{31} = 0. \notag
  \end{numcases}
\end{subequations}

If $\lambda_{22}= -\frac{1}{2}$, then by \eqref{eqn-1-2} we have $\lambda_{23} = \frac{1}{4} +\lambda_{23}$, which is contradictory.

If $\lambda_{22}= \frac{1}{2}$, then by \eqref{eqn-1-1} and \eqref{eqn-1-4} we have $\lambda_{23}\lambda_{23} = -\frac{1}{12}$ and $\lambda_{32}= -2$. Substituting $\lambda_{22}$ and $\lambda_{32}$ into \eqref{eqn-1-5} we have  $\lambda_{33}\lambda_{33}=\frac{11}{6}$. Substituting $\lambda_{22}$, $\lambda_{23}$, $\lambda_{32}$ and $\lambda_{33}$ into $\lambda_{22}\lambda_{33} -\lambda_{23}\lambda_{32}=1$ we have $\frac{1}{2}\sqrt{\frac{11}{6}} +2\sqrt{\frac{1}{12}}\sqrt{-1}=1$, which is contradictory.

If $\lambda_{22}= 0$, then by \eqref{eqn-1-1} we have $\lambda_{23}=0$, which is contradictory to $\lambda_{22}\lambda_{33}-\lambda_{23}\lambda_{32}=1$.

If $\lambda_{22}\neq -\frac{1}{2},0,\frac{1}{2}$, then by \eqref{eqn-1-2} we have $\lambda_{23} = \frac{\lambda_{22} \lambda_{22}}{1+ 2\lambda_{22}}$. Substituting $\lambda_{23}$ into \eqref{eqn-1-1}, \eqref{eqn-1-4} and \eqref{eqn-1-3} we have  $\lambda_{22}^3-3\lambda_{22}-1=0$, $\lambda_{33}= \frac{\lambda_{22}\lambda_{32}+1} {2\lambda_{22}-1}$ and
$\lambda_{32}= \frac{3\lambda_{22}^2} {4\lambda_{22}^2+2\lambda_{22}}$. Substituting $\lambda_{32}$ and $\lambda_{33}$ into \eqref{eqn-1-5}, we have $40\lambda_{22}^2+16\lambda_{22}+1=0$, which contradicts the equation $\lambda_{22}^3-3\lambda_{22}-1=0$. Hence $T_1'\ncong T_5'$.
%that is, $T_1'\ncong T_6',~T_1'\ncong T_{13}',~T_1'\ncong T_{15}'$.

%Now we prove that $T_1'$ does not isomorphic to $T_4'$.
Let $\phi_{3}:T_1' \rightarrow T_7'$ be the isomorphism defined by \eqref{eq:phi-lamdae}. Then by \eqref{eq:y-iso} we get the same equations as in \eqref{eqn-1-1}$\sim$\eqref{eqn-1-6}. Then by $\phi_{2}$ we have $T_1'\ncong T_7'$. %that is, $T_1'\ncong T_8',~T_1'\ncong T_{14}',~T_1'\ncong T_{16}'$.

%Now we prove that $T_1'$ does not isomorphic to $T_{17}'$.
Let $\phi_{4}:T_{1}' \rightarrow T_{17}'$ be the isomorphism defined by \eqref{eq:phi-lamdae}. Then by \eqref{eq:y-iso} we have
\begin{subequations}\label{eqn-5}
  \begin{numcases}{}
  \lambda_{22} = \lambda_{22}\lambda_{22} -6\lambda_{22}\lambda_{23} +6\lambda_{23} \lambda_{23}, \label{eqn-5-1} \\
  \lambda_{23} = 3\lambda_{23} \lambda_{23} -2\lambda_{22} \lambda_{23}, \label{eqn-5-2} \\
  \lambda_{32} = 6\lambda_{22} \lambda_{33} -3 -\lambda_{22}\lambda_{32} -6\lambda_{23}\lambda_{33}, \label{eqn-5-3} \\
  \lambda_{33} = 2\lambda_{22} \lambda_{33} -1 - 3\lambda_{23}\lambda_{33}, \label{eqn-5-4} \\
  -3\lambda_{22} = \lambda_{32} \lambda_{32} -6\lambda_{32}\lambda_{33} +6\lambda_{33}\lambda_{33}, \label{eqn-5-5} \\
  -3\lambda_{23} = 3\lambda_{33}\lambda_{33} -2\lambda_{32}\lambda_{33}, \label{eqn-5-6} \\
  \lambda_{21} = 2\lambda_{22} \lambda_{23} -2\lambda_{23}\lambda_{23}, \notag \\
  \lambda_{31} = -2\lambda_{22}\lambda_{33} +2\lambda_{23}\lambda_{33} +1, \notag \\
  3\lambda_{21} = 2\lambda_{33}\lambda_{33} -2\lambda_{32}\lambda_{33}. \notag
  \end{numcases}
\end{subequations}

If $\lambda_{23}=0$, then by \eqref{eqn-5-1} we have $\lambda_{22}=1$ or $\lambda_{22} = 0$. However, if $\lambda_{22} = 0$, then it contradicts the equation $\lambda_{22}\lambda_{33} -\lambda_{23}\lambda_{32}=1$. Hence, $\lambda_{22}=1$. Substituting $\lambda_{22}$ and $\lambda_{23}$ into \eqref{eqn-5-4} we have $\lambda_{33}=1$. Substituting $\lambda_{22}$, $\lambda_{23}$ and $\lambda_{33}$ into the \eqref{eqn-5-3} we have $\lambda_{32}=\frac{3}{2}$. Substituting $\lambda_{22}$, $\lambda_{23}$, $\lambda_{32}$ and $\lambda_{33}$ into the \eqref{eqn-5-5} we get a contradiction $-3=\frac{9}{4}-9+6$.

If $\lambda_{23}\neq0$, then by \eqref{eqn-5-2} we have $\lambda_{23} = \frac{1+2\lambda_{22} }{3}$. Substituting $\lambda_{23}$ into the \eqref{eqn-5-1} we get $\lambda_{22}^2+\lambda_{22}-2=0$, which implies that $\lambda_{22}=-2$ or $\lambda_{22}=1$. If $\lambda_{22}=-2$, then by \eqref{eqn-5-2}, \eqref{eqn-5-4} and \eqref{eqn-5-3}  we have $\lambda_{23}=-1$, $\lambda_{33}=-\frac{1}{2}$ and $\lambda_{32}=0$. However, substituting $\lambda_{22}$, $\lambda_{23}$, $\lambda_{32}$ and $\lambda_{33}$ into \eqref{eqn-5-5}, we obtain $6 = \frac{3}{2}$, which is contradictory. If $\lambda_{22}=1$, then by \eqref{eqn-5-1}, \eqref{eqn-5-4} and \eqref{eqn-5-3}  we have $\lambda_{23}=1$, $\lambda_{33}=-\frac{1}{2}$ and $\lambda_{32}=-\frac{3}{2}$. Substituting $\lambda_{22}$, $\lambda_{23}$, $\lambda_{32}$ and $\lambda_{33}$ into \eqref{eqn-5-5}, we obtain $-3 = -\frac{3}{4}$, which is contradictory. Hence $T_1'\ncong T_{17}'$.
%that is, $T_1'\ncong T_{18}',~T_1'\ncong T_{19}',~T_1'\ncong T_{20}'$.

%Now we prove that $T_1'$ does not isomorphic to $T_{19}'$.

%Now we prove that $T_1'$ does not isomorphic to $T_{25}'$.
Let $\phi_{5}:T_{1}' \rightarrow T_{25}'$ be the isomorphism defined by \eqref{eq:phi-lamdae}. Then by \eqref{eq:y-iso} we have
\begin{subequations}\label{eqn-23}
  \begin{numcases}{}
  \lambda_{22} = \lambda_{23} = \lambda_{32} = \lambda_{33} = 0, \label{eqn-23-1} \\
  \lambda_{21}= \lambda_{22}\lambda_{22} -3\lambda_{23}\lambda_{23}, \notag \\
  -\lambda_{31} = \lambda_{22}\lambda_{32} -3\lambda_{23}\lambda_{33}, \notag \\
  -3\lambda_{21} = \lambda_{32}\lambda_{32} -3\lambda_{33}\lambda_{33}. \notag
  \end{numcases}
\end{subequations}
In the above equations, \eqref{eqn-23-1} contradicts $\lambda_{22}\lambda_{33}-\lambda_{23}\lambda_{32}=1$. Hence $T_1'\ncong T_{25}'$.
% that is, $T_1'\ncong T_{26}'$, $T_9'\ncong T_{25}'$, $T_9'\ncong T_{26}'$.

%In conclusion, $T_1\ncong T_{2}$, $T_1\ncong T_{3}$, $T_1\ncong T_{4}$, $T_1\ncong T_{5}$, $T_1\ncong T_{6}$, $T_1\ncong T_{10}$.

\textbf{(2)} Secondly,
similar to the above discussion, we can prove that the algebras $T_i', 1\leq i \leq 26$, are pairwise non-isomorphic (see Appendix A for more details of the proof), except for
\begin{align*}
& T_1'(T_1)\cong T_9'; \quad T_2'(T_2)\cong T_3'\cong T_4'\cong T_{10}'\cong T_{11}'\cong T_{12}'; \quad T_5'(T_3)\cong T_6'\cong T_{13}'\cong T_{15}'; \\
& T_7'(T_4)\cong T_8'\cong T_{14}'\cong T_{16}'; \quad T_{17}'(T_5)\cong T_{18}'\cong T_{20}'\cong T_{21}';\quad T_{25}'(T_{10}) \cong T_{26}';\\
 &T_{19}'(T_6); \quad T_{22}'(T_7);\quad T_{23}'(T_8);\quad T_{24}'(T_9).
\end{align*}
That is $T_i$ does not isomorphic to $T_j$ when $i \neq j$, where $1\leq i,j \leq 10$.

Therefore, the $3$-dimensional transposed Poisson $3$-Lie algebra $(A_3,\cdot,[\cdot,\cdot,\cdot])$ with $L_{e_1}=0$ is isomorphic to  one of $T_1 \sim T_{10}$. The proof of Theorem \ref{classification} is completed.

{\bf Acknowledgements:}
The authors would like to thank the referees for helpful comments.

\appendix
\section{}

\textbf{(1)}
$T_1'(T_1)$ does not isomorphic to $T_{19}'(T_6)$, $T_{22}'(T_7)$, $T_{23}'(T_8)$ and $T_{24}'(T_9)$.

Let $\phi_{6}:T_1' \rightarrow T_{19}'(T_6)$ be the isomorphism defined by \eqref{eq:phi-lamdae}. Then by \eqref{eq:y-iso} we obtain the same  equations as in \eqref{eqn-5-1}$\sim$\eqref{eqn-5-6}. Then by $\phi_{4}$ we have $T_1'\ncong T_{19}'$.

Let $\phi_{7}:T_{1}' \rightarrow T_{22}'(T_7)$  be the isomorphism defined by \eqref{eq:phi-lamdae}. Then by \eqref{eq:y-iso} we have
\begin{subequations}\label{eqn-21}
  \begin{numcases}{}
  \alpha\lambda_{22} = \alpha\lambda_{22}\lambda_{22} +(6\theta-6\alpha) \lambda_{22}\lambda_{23} +(6\alpha-9\theta)\lambda_{23} \lambda_{23}, \label{eqn-21-1} \\
  \alpha\lambda_{23} = \theta\lambda_{22}\lambda_{22} -2\alpha\lambda_{22}\lambda_{23} +(3\alpha-3\theta)\lambda_{23}\lambda_{23} , \label{eqn-21-2} \\
  -\alpha\lambda_{32} = \alpha\lambda_{22}\lambda_{32} +(3\theta-3\alpha)(2\lambda_{22} \lambda_{33}-1) +(6\alpha-9\theta)\lambda_{23}\lambda_{33}, \label{eqn-21-3} \\
  -\alpha\lambda_{33} = \theta\lambda_{22}\lambda_{32} -\alpha(2\lambda_{22} \lambda_{33}-1) +(3\alpha-3\theta)\lambda_{23}\lambda_{33}, \label{eqn-21-4} \\
  -3\alpha\lambda_{22} = \alpha\lambda_{32}\lambda_{32} +(6\theta-6\alpha) \lambda_{32}\lambda_{33} +(6\alpha-9\theta)\lambda_{33}\lambda_{33}, \label{eqn-21-5} \\
  -3\alpha\lambda_{23} = \theta\lambda_{32}\lambda_{32} -2\alpha\lambda_{32} \lambda_{33} +(3\alpha-3\theta)\lambda_{33}\lambda_{33}, \label{eqn-21-6} \\
  \alpha\lambda_{21}= \theta\lambda_{22}\lambda_{22} +(3\theta-3\alpha) \lambda_{23}\lambda_{23}, \notag \\
  -\alpha\lambda_{31} = \theta\lambda_{22}\lambda_{32} +(3\theta-3\alpha) \lambda_{23}\lambda_{33}, \notag \\
  -3\alpha\lambda_{21} = \theta\lambda_{32}\lambda_{32} +(3\theta-3\alpha) \lambda_{33}\lambda_{33}. \notag
  \end{numcases}
\end{subequations}
Without loss of generality, let $\alpha=1$ and $\theta=1$. Then \eqref{eqn-21-1}$\sim$\eqref{eqn-21-6} can be simplified to \eqref{eqn-1-1}$\sim$\eqref{eqn-1-6} in $\phi_2$. Then by $\phi_{3}$ we have $T_1'\ncong T_{22}'$.

Let $\phi_{8}:T_{1}' \rightarrow T_{23}'(T_8)$ and $\phi_{9}:T_{1}' \rightarrow T_{24}'(T_9)$ be the isomorphisms defined by \eqref{eq:phi-lamdae}. Then by \eqref{eq:y-iso} we get the same equations as in \eqref{eqn-21-1}$\sim$\eqref{eqn-21-6}. Then by $\phi_{7}$ we have $T_1'\ncong T_{23}'$, $T_1'\ncong T_{24}'$.

\textbf{(2)}
$T_2'(T_2)$ does not isomorphic to $T_5'(T_3)$, $T_7'(T_4)$, $T_{17}'(T_5)$, $T_{19}'(T_6)$, $T_{22}'(T_7)$, $T_{23}'(T_8)$ $T_{24}'(T_9)$ and $T_{25}'(T_{10})$.

%Now we prove that $T_2'$ does not isomorphic to $T_{6}'$ and $T_8'$. 
Let $\phi_{10}:T_2' \rightarrow T_6'$ and $\phi_{11}:T_2' \rightarrow T_8'$ be the isomorphisms defined by \eqref{eq:phi-lamdae}. Then by \eqref{eq:y-iso} we obtain the same equations as in \eqref{eqn-1-1}$\sim$\eqref{eqn-1-6}. Then by $\phi_{2}$ we have $T_2'\ncong T_6'$, $T_2'\ncong T_8'$.
%that is, $T_2'\ncong T_{18}',~T_2'\ncong T_{19}',~T_2'\ncong T_{20}'$.

Let $\phi_{12}:T_2' \rightarrow T_{17}'$ and $\phi_{13}:T_2' \rightarrow T_{19}'$ be the isomorphisms defined by \eqref{eq:phi-lamdae}. Then by \eqref{eq:y-iso} we obtain the same equations as in \eqref{eqn-5-1}$\sim$\eqref{eqn-5-6}. Then by $\phi_{4}$ we have $T_2'\ncong T_{17}'$ and $T_2'\ncong T_{19}'$.

Let $\phi_{14}:T_2' \rightarrow T_{22}'$, $\phi_{15}:T_2' \rightarrow T_{23}'$ and $\phi_{16}:T_2' \rightarrow T_{24}'$ be the isomorphisms defined by \eqref{eq:phi-lamdae}. Then by \eqref{eq:y-iso} we get the same equations as in \eqref{eqn-21-1}$\sim$\eqref{eqn-21-6}. Then by $\phi_{7}$ we have $T_2'\ncong T_{22}'$, $T_2'\ncong T_{23}'$, $T_2'\ncong T_{24}'$.

Let $\phi_{17}:T_2' \rightarrow T_{25}'$ be the isomorphism defined by \eqref{eq:phi-lamdae}. Then by \eqref{eq:y-iso} we get the same equation \eqref{eqn-23-1}. Then by $\phi_{5}$ we have $T_2'\ncong T_{25}'$.

\textbf{(3)}
$T_5'(T_3)$ does not isomorphic to $T_7'(T_4)$, $T_{17}'(T_5)$, $T_{19}'(T_6)$, $T_{22}'(T_7)$, $T_{23}'(T_8)$ $T_{24}'(T_9)$ and $T_{25}'(T_{10})$.

%Now we prove that $T_5'$ does not isomorphic to $T_{7}'$.
Let $\phi_{18}:T_5' \rightarrow T_7'$  be the isomorphism defined by  \eqref{eq:phi-lamdae}. Then by \eqref{eq:y-iso} we have
\begin{subequations}\label{eqn-{12}}
  \begin{numcases}{}
  \lambda_{22} + \lambda_{32} =\lambda_{22}\lambda_{22} -3\lambda_{23}\lambda_{23}, \label{eqn-12-1} \\
  \lambda_{23} + \lambda_{33} =\lambda_{22}\lambda_{22} -2\lambda_{22}\lambda_{23}, \label{eqn-12-2} \\
  -\lambda_{32} = \lambda_{22} \lambda_{32} -3\lambda_{23} \lambda_{33}, \label{eqn-12-3} \\
  -\lambda_{33} = \lambda_{22} \lambda_{32} -2\lambda_{22} \lambda_{33} + 1, \label{eqn-12-4} \\
  -3\lambda_{22} = \lambda_{32} \lambda_{32} -3\lambda_{33}\lambda_{33}, \label{eqn-12-5} \\
  -3\lambda_{23} = \lambda_{32} \lambda_{32} -2\lambda_{32}\lambda_{33}, \label{eqn-12-6} \\
  \lambda_{21} + \lambda_{31} = \lambda_{22}\lambda_{22} -\lambda_{22}\lambda_{23}, \label{eqn-12-7} \\
  -\lambda_{31} = \lambda_{22}\lambda_{32} -\frac{1}{2}(2\lambda_{22}\lambda_{33}-1), \label{eqn-12-8} \\
  -3\lambda_{21} = \lambda_{32} \lambda_{32} -\lambda_{32}\lambda_{33}. \label{eqn-12-9}
  \end{numcases}
\end{subequations}

If $\lambda_{22}=0$, then by \eqref{eqn-12-4} we have $\lambda_{33}=-1$. Substituting $\lambda_{22}$ and $\lambda_{33}$ into \eqref{eqn-12-2} we have $\lambda_{23}=1$. Substituting $\lambda_{22}$, $\lambda_{33}$ and $\lambda_{32}$ into \eqref{eqn-12-3} we have $\lambda_{32}=-3$. Substituting $\lambda_{22}$, $\lambda_{32}$ and $\lambda_{33}$ into $\lambda_{22}\lambda_{33} -\lambda_{23}\lambda_{32}=1$ we have $-3=1$, which is contradictory.

If $\lambda_{22}=\frac{1}{2}$, then by \eqref{eqn-12-4} we have $\lambda_{32}=-2$. Substituting $\lambda_{22}$ and $\lambda_{32}$ into \eqref{eqn-12-2} and \eqref{eqn-12-3} we have $2\lambda_{23}+\lambda_{33}=\frac{1}{4}$ and $8\lambda_{23}^2-\lambda_{23}-4=0$. Substituting $\lambda_{22}$, $\lambda_{32}$ and $\lambda_{33}$ into $\lambda_{22}\lambda_{33} -\lambda_{23}\lambda_{32}=1$ we have $\frac{1}{8}+\lambda_{23}=1$, which contradicts $8\lambda_{23}^2-\lambda_{23}-4=0$.

If $\lambda_{22}=-\frac{1}{2}$, then by \eqref{eqn-12-2} we have $\lambda_{33}=\frac{1}{4}$. Substituting $\lambda_{22}$ and $\lambda_{33}$ into the \eqref{eqn-12-4} we have $\lambda_{32}=3$. Substituting $\lambda_{22}$, $\lambda_{33}$ and $\lambda_{32}$ into \eqref{eqn-12-3} we have $\lambda_{23}=2$. Substituting $\lambda_{22}$, $\lambda_{32}$ and $\lambda_{33}$ into $\lambda_{22}\lambda_{33} -\lambda_{23}\lambda_{32}=1$ we have $-\frac{1}{8}-6=1$, which is contradictory.

If $\lambda_{22}\neq0,\frac{1}{2},-\frac{1}{2}$, then by \eqref{eqn-12-4} we have $\lambda_{33} = \frac{\lambda_{22}\lambda_{32}+1}{2\lambda_{22}-1}$. Substituting $\lambda_{33}$ into the \eqref{eqn-12-2} and \eqref{eqn-12-6} we have
\begin{equation}\label{eq:57}
6\lambda_{22}^3-3\lambda_{22}^2-7\lambda_{22}\lambda_{32}-2\lambda_{22}\lambda_{32}^2 -\lambda_{32}^2-2\lambda_{32}-3=0.
\end{equation}
 Substituting $\lambda_{33}$ into the \eqref{eqn-12-3} and \eqref{eqn-12-6} we have $(\lambda_{32}+2\lambda_{22}+1)(\lambda_{32}- \frac{2\lambda_{22}^2-\lambda_{22}-1}{\lambda_{22}})=0$. If $\lambda_{32}=-2\lambda_{22}-1$, then $\lambda_{33} = -\lambda_{22}-1$, $\lambda_{23} = \frac{\lambda_{22}^2+\lambda_{22}+1}{2\lambda_{22}+1}$. Taking $\lambda_{32}$ into \eqref{eq:57} we have $(\lambda_{22}-1)(2\lambda_{22}-1)(\lambda_{22}+2)=0$. Substituting $\lambda_{22}$, $\lambda_{23}$, $\lambda_{32}$ and $\lambda_{33}$ into \eqref{eqn-12-7}$\sim$\eqref{eqn-12-9} we have $\frac{\lambda_{22}^3-\lambda_{22}}{2\lambda_{22}+1} =\frac{2\lambda_{22}^2-2\lambda_{22}-3}{6}$, that is, $2\lambda_{22}^3+2\lambda_{22}^2+2\lambda_{22}+3=0$, which is contradictory to $(\lambda_{22}-1)(2\lambda_{22}-1)(\lambda_{22}+2)=0$. If $\lambda_{32}\neq-2\lambda_{22}-1$, then $\lambda_{32}=\frac{2\lambda_{22}^2-\lambda_{22}-1}{\lambda_{22}}$, $\lambda_{33} =\frac{2\lambda_{22}^2-\lambda_{22}}{2\lambda_{22}-1}$ and $\lambda_{23} = \frac{\lambda_{22}^2-\lambda_{22}}{2\lambda_{22}+1}$. Taking $\lambda_{32}$ into \eqref{eq:57} we have $(\lambda_{22}-1)(2\lambda_{22}-1)(\lambda_{22}^3+8\lambda_{22}^2+5\lambda_{22}+1)=0$. Substituting $\lambda_{22}$, $\lambda_{23}$, $\lambda_{32}$ and $\lambda_{33}$ into \eqref{eqn-12-7}$\sim$\eqref{eqn-12-9} we have $\frac{\lambda_{22}^3+2\lambda_{22}^2}{2\lambda_{22}+1} =\frac{-2\lambda_{22}^4+12\lambda_{22}^3-7\lambda_{22}^2-4\lambda_{22}-2}{6\lambda_{22}^2}$, that is, $10\lambda_{22}^5-10\lambda_{22}^4+2\lambda_{22}^3+15\lambda_{22}^2+8\lambda_{22}+2=0$, which contradicts $(\lambda_{22}-1)(2\lambda_{22}-1)(\lambda_{22}^3+8\lambda_{22}^2+5\lambda_{22}+1)=0$. Hence $T_5'\ncong T_7'$.

%Now we prove that $T_5'$ does not isomorphic to $T_{17}'$. 
Let $\phi_{19}:T_{5}' \rightarrow T_{17}'$ be the isomorphism defined by \eqref{eq:phi-lamdae}. Then by \eqref{eq:y-iso} we have
\begin{subequations}\label{eqn-15}
  \begin{numcases}{}
  \lambda_{22} +\lambda_{32} = \lambda_{22}\lambda_{22} -6\lambda_{22}\lambda_{23} +6\lambda_{23} \lambda_{23}, \label{eqn-15-1} \\
  \lambda_{23} +\lambda_{33} = 3\lambda_{23} \lambda_{23} -2\lambda_{22} \lambda_{23}, \label{eqn-15-2} \\
  \lambda_{32} = 6\lambda_{22} \lambda_{33} -3 -\lambda_{22}\lambda_{32} -6\lambda_{23}\lambda_{33}, \label{eqn-15-3} \\
  \lambda_{33} = 2\lambda_{22} \lambda_{33} -1 - 3\lambda_{23}\lambda_{33}, \label{eqn-15-4} \\
  -3\lambda_{22} = \lambda_{32} \lambda_{32} -6\lambda_{32}\lambda_{33} +6\lambda_{33}\lambda_{33}, \label{eqn-15-5} \\
  -3\lambda_{23} = 3\lambda_{33}\lambda_{33} -2\lambda_{32}\lambda_{33}, \label{eqn-15-6} \\
  \lambda_{21} +\lambda_{31} = 2\lambda_{22} \lambda_{23} -2\lambda_{23}\lambda_{23}, \label{eqn-15-7} \\
  -\lambda_{31} = 2\lambda_{22}\lambda_{33} -1 -2\lambda_{23}\lambda_{33}, \label{eqn-15-8} \\
  3\lambda_{21} = 2\lambda_{33}\lambda_{33} -2\lambda_{32}\lambda_{33}. \label{eqn-15-9}
  \end{numcases}
\end{subequations}

If $\lambda_{22}=0$, then by \eqref{eqn-15-2}, \eqref{eqn-15-4} and \eqref{eqn-15-3} we have $9\lambda_{23}^3-\lambda_{23}+1=0$ and $\lambda_{32}=6\lambda_{23}^2$. Substituting $\lambda_{22}$, $\lambda_{23}$ and $\lambda_{32}$ into $\lambda_{22}\lambda_{33} -\lambda_{23}\lambda_{32}=1$ we have $\frac{2-2\lambda_{23}}{3}=1$, which contradicts $9\lambda_{23}^3-\lambda_{23}+1=0$.

If $\lambda_{22}=1$, then by \eqref{eqn-15-1}, \eqref{eqn-15-2} and \eqref{eqn-15-4} we have $\lambda_{32}=2\lambda_{33}$ and $\lambda_{23}=\frac{\lambda_{33}-1}{3\lambda_{33}}$. Substituting $\lambda_{22}$ and $\lambda_{32}$ into the \eqref{eqn-12-5} we have $\lambda_{33}^2=\frac{3}{2}$. If $\lambda_{33}=\pm\frac{\sqrt{6}}{2}$, then $\lambda_{32}=\pm\sqrt{6}$, $\lambda_{23}=\frac{\sqrt{6}\pm2}{3\sqrt{6}}$ which contradicts the equation  $\lambda_{22}\lambda_{33} -\lambda_{23}\lambda_{32}=1$.

If $\lambda_{22}=-1$, then by \eqref{eqn-15-3} and \eqref{eqn-15-4} we have $\lambda_{33}+\lambda_{23}\lambda_{33}=0$. If $\lambda_{23}=-1$, then
substituting $\lambda_{22}$ and $\lambda_{23}$ into \eqref{eqn-12-2} and \eqref{eqn-12-1} we have $\lambda_{33}=2$, $\lambda_{32}=2$. However, this contradicts the equation  $\lambda_{22}\lambda_{33} -\lambda_{23}\lambda_{32}=1$. If $\lambda_{23}\neq-1$, then $\lambda_{33}=0$.
Substituting $\lambda_{22}$ and $\lambda_{23}$ into \eqref{eqn-12-2} and \eqref{eqn-12-1} we have $\lambda_{23}=0$, $\lambda_{32}=2$, which are contradictory to $\lambda_{22}\lambda_{33} -\lambda_{23}\lambda_{32}=1$.

If $\lambda_{22}=\frac{3\lambda_{23}+1}{2}$, then by \eqref{eqn-15-1} $\sim$ \eqref{eqn-15-4} we have $-4\lambda_{22}^2 \lambda_{23} +12\lambda_{22}\lambda_{23}^2 -9\lambda_{23}^3 +\lambda_{23}=1$, $\lambda_{22}^3-2\lambda_{22}^2-\lambda_{22}-1=0$, that is, $\lambda_{22}^3=2\lambda_{22}^2+\lambda_{22}+1$. Then taking it into $-4\lambda_{22}^2 \lambda_{23} +12\lambda_{22}\lambda_{23}^2 -9\lambda_{23}^3 +\lambda_{23}=1$ we have $\lambda_{22}=-\frac{5}{12}$, which is contradictory to $\lambda_{22}^3-2\lambda_{22}^2-\lambda_{22}-1=0$.

If $\lambda_{22}\neq0,1,-1,\frac{3\lambda_{23}+1}{2}$, then by \eqref{eqn-15-4} we have $\lambda_{33}=\frac{1}{2\lambda_{22} -3\lambda_{23}-1}$. Substituting $\lambda_{33}$ into \eqref{eqn-15-2}, we have
\begin{equation}  \label{eqn-15-10-1}
 -4\lambda_{22}^2 \lambda_{23} +12\lambda_{22}\lambda_{23}^2 -9\lambda_{23}^3 +\lambda_{23}=1.
\end{equation}
By \eqref{eqn-15-1}$\sim$\eqref{eqn-15-3} and \eqref{eqn-15-10-1} we have $\lambda_{23}=\frac{\lambda_{22}^3-\lambda_{22}+1}{2\lambda_{22}^2-2}$. Substituting $\lambda_{22}$ and $\lambda_{23}$ into \eqref{eqn-15-1} and \eqref{eqn-15-2} we have $\lambda_{32}=\frac{-\lambda_{22}^6-2\lambda_{22}^5+2\lambda_{22}^4 +4\lambda_{22}^3 -\lambda_{22}^2-2\lambda_{22}+3}{2(\lambda_{22}^2-1)}$, $\lambda_{33}=\frac{-\lambda_{22}^6-2\lambda_{22}^5+2\lambda_{22}^4 +6\lambda_{22}^3 -3\lambda_{22}^2-4\lambda_{22}+5}{(2\lambda_{22}^2-2)^2}$. Substituting $\lambda_{22}$, $\lambda_{23}$, $\lambda_{32}$ and $\lambda_{33}$ into \eqref{eqn-15-3} and \eqref{eqn-15-4} we have
\begin{equation}\label{eqn-15-10}
  \lambda_{22}^9-7\lambda_{22}^7 +3\lambda_{22}^6+6\lambda_{22}^5 -30\lambda_{22}^4+14\lambda_{22}^3+39\lambda_{22}^2 -14\lambda_{22}-3=0,
\end{equation}
\begin{equation}\label{eqn-15-11}
  \lambda_{22}^9-7\lambda_{22}^7 +3\lambda_{22}^6 +15\lambda_{22}^5-18\lambda_{22}^4-10\lambda_{22}^3+27\lambda_{22}^2+\lambda_{22}-3=0.
\end{equation}
Then subtracting \eqref{eqn-15-10} from \eqref{eqn-15-11} we have $9\lambda_{22}^5+12\lambda_{22}^4-24\lambda_{22}^3-12\lambda_{22}^2+15\lambda_{22}=0$, that is, $3\lambda_{22}^4 +4\lambda_{22}^3 -8\lambda_{22}^2-3\lambda_{22}+5=0$. Substituting $\lambda_{22}$, $\lambda_{23}$, $\lambda_{32}$ and $\lambda_{33}$ into $\lambda_{22}\lambda_{33} -\lambda_{23}\lambda_{32}=1$ we have
\begin{equation}\label{eqn-15-12}
  2\lambda_{22}^9+2\lambda_{22}^8-4\lambda_{22}^7 -9\lambda_{22}^6 +10\lambda_{22}^5 +2\lambda_{22}^4-16\lambda_{22}^3+7\lambda_{22}^2 +10\lambda_{22}-7=0.
\end{equation}
Substituting $\lambda_{22}$, $\lambda_{23}$, $\lambda_{32}$ and $\lambda_{33}$ into \eqref{eqn-15-7}$\sim$\eqref{eqn-15-9} we have
\begin{equation*}
  \frac{2\lambda_{22}^6-4\lambda_{22}^4 +2\lambda_{22}^2-2}{(2\lambda_{22}^2-2)^2}=\frac{-24\lambda_{22}^8+42\lambda_{22}^7 +70\lambda_{22}^6-192\lambda_{22}^5 +58\lambda_{22}^4+234\lambda_{22}^3 -224\lambda_{22}^2-30\lambda_{22}+16}{36\lambda_{22}^8-144\lambda_{22}^6 +216\lambda_{22}^4-144\lambda_{22}^2+36},
\end{equation*}
that is,
\begin{equation}\label{eqn-15-13}
  78\lambda_{22}^8-96\lambda_{22}^7-232\lambda_{22}^6+516\lambda_{22}^5 -4\lambda_{22}^4-720\lambda_{22}^3+332\lambda_{22}^2 +84\lambda_{22}-70=0.
\end{equation}
Then taking $\lambda_{22}^4 =-\frac{4\lambda_{22}^3 -8\lambda_{22}^2 -3\lambda_{22}+5}{3}$ into \eqref{eqn-15-10}, \eqref{eqn-15-12} and \eqref{eqn-15-13} we have
\begin{subequations}\label{eqn-90}
  \begin{numcases}{}
  23866\lambda_{22}^3-14573\lambda_{22}^2-17136\lambda_{22}+19763=0,\label{eqn-90-1} \\
  60848\lambda_{22}^3-35965\lambda_{22}^2-37080\lambda_{22}+37147=0,\label{eqn-90-2} \\
  64996\lambda_{22}^3-44726\lambda_{22}^2-39276\lambda_{22}+103025=0,\label{eqn-90-3}
  \end{numcases}
\end{subequations}
Multiplying \eqref{eqn-90-1} by $60848$, multiplying \eqref{eqn-90-2} by $23866$, then subtracting the results we have $1577623\lambda_{22}^2+8763336\lambda_{22}-17554929=0$.
Multiplying \eqref{eqn-90-1} by $64996$, multiplying \eqref{eqn-90-3} by $23866$, then subtracting the results we have $20040668\lambda_{22}^2-29401740\lambda_{22}-195713117=0$, which is contradictory to $1577623\lambda_{22}^2+8763336\lambda_{22}-17554929=0$. Hence $T_5'\ncong T_{17}'$.

Let $\phi_{20}:T_5' \rightarrow T_{19}'$ be the isomorphism defined by \eqref{eq:phi-lamdae}. Then by \eqref{eq:y-iso} we get the same equations as in \eqref{eqn-15-1}$\sim$\eqref{eqn-15-6}. Then by $\phi_{19}$ we have $T_5'\ncong T_{19}'$.

Let $\phi_{21}:T_{5}' \rightarrow T_{22}'$ be the isomorphism defined by \eqref{eq:phi-lamdae}. Then by \eqref{eq:y-iso} we have
\begin{subequations}\label{eqn-22}
  \begin{numcases}{}
  \alpha(\lambda_{22}+\lambda_{32}) = \alpha\lambda_{22}\lambda_{22} +(6\theta-6\alpha) \lambda_{22}\lambda_{23} +(6\alpha-9\theta)\lambda_{23} \lambda_{23}, \label{eqn-22-1} \\
  \alpha(\lambda_{23}+\lambda_{33}) = \theta\lambda_{22}\lambda_{22} -2\alpha\lambda_{22}\lambda_{23} +(3\alpha-3\theta)\lambda_{23}\lambda_{23} , \label{eqn-22-2} \\
  -\alpha\lambda_{32} = \alpha\lambda_{22}\lambda_{32} +(3\theta-3\alpha)(2\lambda_{22} \lambda_{33}-1) +(6\alpha-9\theta)\lambda_{23}\lambda_{33}, \label{eqn-22-3} \\
  -\alpha\lambda_{33} = \theta\lambda_{22}\lambda_{32} -\alpha(2\lambda_{22} \lambda_{33}-1) +(3\alpha-3\theta)\lambda_{23}\lambda_{33}, \label{eqn-22-4} \\
  -3\alpha\lambda_{22} = \alpha\lambda_{32}\lambda_{32} +(6\theta-6\alpha) \lambda_{32}\lambda_{33} +(6\alpha-9\theta)\lambda_{33}\lambda_{33}, \label{eqn-22-5} \\
  -3\alpha\lambda_{23} = \theta\lambda_{32}\lambda_{32} -2\alpha\lambda_{32} \lambda_{33} +(3\alpha-3\theta)\lambda_{33}\lambda_{33}, \label{eqn-22-6} \\
  \alpha(\lambda_{21} +\lambda_{31}) = \theta\lambda_{22}\lambda_{22} +(3\theta-3\alpha) \lambda_{23}\lambda_{23}, \notag \\
  -\alpha\lambda_{31} = \theta\lambda_{22}\lambda_{32} +(3\theta-3\alpha) \lambda_{23}\lambda_{33}, \notag \\
  -3\alpha\lambda_{21} = \theta\lambda_{32}\lambda_{32} +(3\theta-3\alpha) \lambda_{33}\lambda_{33}. \notag
  \end{numcases}
\end{subequations}
Without loss of generality, let $\alpha=1$ and $\theta=1$. Then \eqref{eqn-22-1}$\sim$\eqref{eqn-22-6} can be simplified to \eqref{eqn-12-1}$\sim$\eqref{eqn-12-6}. Then by $\phi_{18}$ we have $T_5'\ncong T_{22}'$.

Let $\phi_{22}:T_{5}' \rightarrow T_{23}'$ and $\phi_{23}:T_{5}' \rightarrow T_{24}'$ be the isomorphisms defined by \eqref{eq:phi-lamdae}. Then by \eqref{eq:y-iso} we get the same equations as in \eqref{eqn-22-1}$\sim$\eqref{eqn-22-6}. Then by $\phi_{21}$ we have $T_5'\ncong T_{23}'$, $T_5'\ncong T_{24}'$.

Let $\phi_{24}:T_{5}' \rightarrow T_{25}'$ be the isomorphism defined by \eqref{eq:phi-lamdae}. Then by \eqref{eq:y-iso} we have
\begin{subequations}\label{eqn-24}
  \begin{numcases}{}
  \lambda_{22} = \lambda_{23}= \lambda_{32}= \lambda_{33}=0, \label{eqn-24-1} \\
  \lambda_{21} +\lambda_{31} = \lambda_{22}\lambda_{22} -3\lambda_{23}\lambda_{23}, \notag \\
  -\lambda_{31} = \lambda_{22}\lambda_{32} -3\lambda_{23}\lambda_{33}, \notag \\
  -3\lambda_{21} = \lambda_{32}\lambda_{32} -3\lambda_{33}\lambda_{33}. \notag
  \end{numcases}
\end{subequations}
\eqref{eqn-24-1} contradicts with $\lambda_{22}\lambda_{33}-\lambda_{23}\lambda_{32}=1$. Hence $T_5'\ncong T_{25}'$.
%Equations \eqref{eqn-24-1}-\eqref{eqn-24-6} are obviously contradict to $\lambda_{22}\lambda_{33}-\lambda_{23}\lambda_{32}=1$. Hence $T_5'\ncong T_{25}'$.
% that is, $T_5'\ncong T_{26}'$, $T_6'\ncong T_{25}'$, $T_6'\ncong T_{26}'$, $T_{13}'\ncong T_{25}'$, $T_{13}'\ncong T_{26}'$, $T_{15}'\ncong T_{25}'$, $T_{15}'\ncong T_{26}'$.

\textbf{(4)}
$T_7'(T_4)$ does not isomorphic to $T_{17}'(T_5)$, $T_{19}'(T_6)$, $T_{22}'(T_7)$, $T_{23}'(T_8)$ $T_{24}'(T_9)$ and $T_{25}'(T_{10})$.

Let $\phi_{25}:T_7' \rightarrow T_{17}'$ and $\phi_{26}:T_7' \rightarrow T_{19}'$ be the isomorphisms defined by \eqref{eq:phi-lamdae}. Then by \eqref{eq:y-iso} we get the same equations as in \eqref{eqn-15-1}$\sim$\eqref{eqn-15-6}. Then by $\phi_{19}$ we have $T_7'\ncong T_{17}'$.

Let $\phi_{27}:T_{7}' \rightarrow T_{22}'$, $\phi_{28}:T_{7}' \rightarrow T_{23}'$ and $\phi_{29}:T_{7}' \rightarrow T_{24}'$ be the isomorphisms defined by \eqref{eq:phi-lamdae}. Then by \eqref{eq:y-iso} we get the same equations as in \eqref{eqn-22-1}$\sim$\eqref{eqn-22-6}. Then by $\phi_{21}$ we have $T_7'\ncong T_{22}'$, $T_7'\ncong T_{23}'$, $T_7'\ncong T_{24}'$.

Let $\phi_{30}:T_7' \rightarrow T_{25}'$ be the isomorphism defined by \eqref{eq:phi-lamdae}. Then by \eqref{eq:y-iso} we get the same equation \eqref{eqn-24-1}. Then by $\phi_{24}$ we have $T_7'\ncong T_{25}'$.

\textbf{(5)}
$T_{17}'(T_5)$ does not isomorphic to $T_{19}'(T_6)$, $T_{25}'(T_{10})$, $T_{22}'(T_7)$, $T_{23}'(T_8)$ and $T_{24}'(T_9)$.

%Now we prove that $T_{17}'$ does not isomorphic to $T_{19}'$. 
Let $\phi_{31}:T_{17}' \rightarrow T_{19}'$ be the isomorphism defined by \eqref{eq:phi-lamdae}. Then by \eqref{eq:y-iso} we have
\begin{subequations}\label{eqn-35}
  \begin{numcases}{}
  \lambda_{22} = \lambda_{22}\lambda_{22} -6\lambda_{22}\lambda_{23} +6\lambda_{23}\lambda_{23}, \label{eqn-35-1} \\
  \lambda_{23} = -2\lambda_{22}\lambda_{23} +3\lambda_{23}\lambda_{23}, \label{eqn-35-2} \\
  -3\lambda_{22} -\lambda_{32} = \lambda_{22}\lambda_{32} -3(2\lambda_{22}\lambda_{33}-1) +6\lambda_{23}\lambda_{33}, \label{eqn-35-3} \\
  -3\lambda_{23} -\lambda_{33} = -2\lambda_{22}\lambda_{33} +1 +3\lambda_{23}\lambda_{33}, \label{eqn-35-4} \\
  6\lambda_{22} +3\lambda_{32} = \lambda_{32}\lambda_{32} -6\lambda_{32}\lambda_{33} +6\lambda_{33}\lambda_{33}, \label{eqn-35-5} \\
  6\lambda_{23} +3\lambda_{33} = -2\lambda_{32}\lambda_{33} +3\lambda_{33}\lambda_{33}, \label{eqn-35-6} \\
  \lambda_{21} = \lambda_{22}\lambda_{22} -2\lambda_{22}\lambda_{23}, \label{eqn-35-7} \\
  \lambda_{11} -3\lambda_{21} -\lambda_{31} = \lambda_{22}\lambda_{32} -2\lambda_{22}\lambda_{33}+1, \label{eqn-35-8} \\
  -2\lambda_{11} +6\lambda_{21} +3\lambda_{31} = \lambda_{32}\lambda_{32} -2\lambda_{32}\lambda_{33}.  \label{eqn-35-9}
  \end{numcases}
\end{subequations}

If $\lambda_{23}=0$, then by \eqref{eqn-35-1}, \eqref{eqn-35-4} and \eqref{eqn-35-3} we have $\lambda_{22}=1$, $\lambda_{33}=1$ and $\lambda_{32}=0$. Substituting $\lambda_{22}$, $\lambda_{23}$, $\lambda_{33}$ and $\lambda_{32}$ into \eqref{eqn-35-7}$\sim$\eqref{eqn-35-9} we have $\lambda_{11}=0$, $\lambda_{21}=1$ and $\lambda_{31}=-2$, However, this contradicts with $\lambda_{11}\neq 0$.

If $\lambda_{23}\neq0$, then by \eqref{eqn-35-1} and \eqref{eqn-35-2} we have either $\lambda_{22}=1$ and $\lambda_{23}=1$ or $\lambda_{22}=-2$ and $\lambda_{23}=-1$. If $\lambda_{22}=1$, $\lambda_{23}=1$, then by \eqref{eqn-35-3}$\sim$\eqref{eqn-35-4} we have $\lambda_{32}=-3$, $\lambda_{33}=-2$. Substituting $\lambda_{22}$, $\lambda_{33}$ and $\lambda_{32}$ into \eqref{eqn-35-7}$\sim$\eqref{eqn-35-9} we have $\lambda_{11}=0$, $\lambda_{21}=-1$ and $\lambda_{31}=1$, which contradicts with $\lambda_{11}\neq0$. If $\lambda_{22}=-2$, $\lambda_{23}=-1$, then by \eqref{eqn-35-3}$\sim$\eqref{eqn-35-4} we have $\lambda_{32}=3$, $\lambda_{33}=1$. Substituting $\lambda_{22}$, $\lambda_{33}$ and $\lambda_{32}$ into \eqref{eqn-35-7}$\sim$\eqref{eqn-35-9} we have $\lambda_{11}=0$, $\lambda_{21}=0$ and $\lambda_{31}=1$. However, this contradicts with $\lambda_{11}\neq0$. Hence $T_{17}'\ncong T_{19}'$.

Let $\phi_{32}:T_{17}' \rightarrow T_{25}'$ be the isomorphism defined by \eqref{eq:phi-lamdae}. Then by \eqref{eq:y-iso} we have
\begin{subequations}\label{eqn-25}
  \begin{numcases}{}
  \lambda_{22} = \lambda_{23} =0, \label{eqn-25-1} \\
  -3\lambda_{22} -\lambda_{32} = 0, \label{eqn-25-3} \\
  -3\lambda_{23} -\lambda_{33} = 0, \label{eqn-25-4} \\
  6\lambda_{22} +3\lambda_{32} = 0, \label{eqn-25-5} \\
  6\lambda_{23} +3\lambda_{33} = 0, \label{eqn-25-6} \\
  \lambda_{21} = \lambda_{22}\lambda_{22} -3\lambda_{23}\lambda_{23}, \notag \\
  \lambda_{11} -3\lambda_{21} -\lambda_{31} = \lambda_{22}\lambda_{32} -3\lambda_{23}\lambda_{33}, \notag \\
  -2\lambda_{11} +6\lambda_{21} +3\lambda_{31} = \lambda_{32}\lambda_{32} -3\lambda_{33}\lambda_{33}. \notag
  \end{numcases}
\end{subequations}
By \eqref{eqn-25-3}$\sim$\eqref{eqn-25-4}, we have $\lambda_{32}=\lambda_{33}=0$, which contradicts $\lambda_{22}\lambda_{33}-\lambda_{23}\lambda_{32}=1$. Hence $T_{17}'\ncong T_{25}'$.

Let $\phi_{33}:T_{21}' \rightarrow T_{22}'$ be the isomorphism defined by \eqref{eq:phi-lamdae}. Then by \eqref{eq:y-iso} we have
\begin{subequations}\label{eqn-36}
  \begin{numcases}{}
  \alpha\lambda_{22} +\theta\lambda_{32} = \alpha\lambda_{22}\lambda_{22} +2(3\theta-3\alpha) \lambda_{22}\lambda_{23} +(6\alpha-9\theta)\lambda_{23}\lambda_{23}, \label{eqn-36-1} \\
  \alpha\lambda_{23} +\theta\lambda_{33} = \theta\lambda_{22}\lambda_{22} -2\alpha\lambda_{22} \lambda_{23} +(3\alpha-3\theta)\lambda_{23}\lambda_{23}, \label{eqn-36-2} \\
  (3\theta-3\alpha)\lambda_{22} -\alpha\lambda_{32} = \alpha\lambda_{22}\lambda_{32} +(3\theta-3\alpha)(2\lambda_{22}\lambda_{33}-1) \label{eqn-36-3} \\
  \quad\quad\quad\quad\quad\quad\quad\quad\quad\quad +(6\alpha-9\theta)\lambda_{23} \lambda_{33}, \notag \\
  (3\theta-3\alpha)\lambda_{23} -\alpha\lambda_{33} = \theta\lambda_{22}\lambda_{32} -\alpha(2\lambda_{22}\lambda_{33}-1) +(3\alpha-3\theta)\lambda_{23}\lambda_{33}, \label{eqn-36-4} \\
  (6\alpha-9\theta)\lambda_{22} +(3\alpha-3\theta)\lambda_{32} = \alpha\lambda_{32}\lambda_{32} +2(3\theta-3\alpha)\lambda_{32}\lambda_{33} \label{eqn-36-5}\\
  \quad\quad\quad\quad\quad\quad\quad\quad\quad\quad +(6\alpha-9\theta)\lambda_{33} \lambda_{33}, \notag \\
  (6\alpha-9\theta)\lambda_{23} +(3\alpha-3\theta)\lambda_{33} = \theta\lambda_{32}\lambda_{32} -2\alpha\lambda_{32}\lambda_{33} +(3\alpha-3\theta)\lambda_{33}\lambda_{33}, \label{eqn-36-6} \\
  \alpha\lambda_{21} +\theta\lambda_{31} = \theta\lambda_{22}\lambda_{22} +(3\theta-3\alpha) \lambda_{23}\lambda_{23}, \notag \\
  (\alpha-2\theta)\lambda_{11} +(3\theta-3\alpha)\lambda_{21} -\alpha\lambda_{31} = \theta\lambda_{22}\lambda_{32} +(3\theta-3\alpha) \lambda_{23}\lambda_{33}, \notag \\
  (6\theta-2\alpha)\lambda_{11} +(6\alpha-9\theta)\lambda_{21} +(3\alpha-3\theta)\lambda_{31} = \theta\lambda_{32}\lambda_{32} +(3\theta-3\alpha) \lambda_{33}\lambda_{33}. \notag
  \end{numcases}
\end{subequations}
Without loss of generality, let $\alpha=1$ and $\theta=1$. Then \eqref{eqn-36-1}$\sim$\eqref{eqn-36-6} can be simplified to \eqref{eqn-12-1}$\sim$\eqref{eqn-12-6}. Then by $\phi_{18}$ we have $T_{21}'\ncong T_{22}'$.

Let $\phi_{34}:T_{21}' \rightarrow T_{23}'$ and $\phi_{35}:T_{21}' \rightarrow T_{24}'$ be the isomorphisms defined by \eqref{eq:phi-lamdae}. Then by \eqref{eq:y-iso} we get the same equations as in \eqref{eqn-36-1}$\sim$\eqref{eqn-36-6}. Then by $\phi_{33}$ we have $T_{21}'\ncong T_{23}'$, $T_{21}'\ncong T_{24}'$.

\textbf{(6)}
$T_{19}'(T_6)$ does not isomorphic to $T_{22}'(T_7)$, $T_{23}'(T_8)$ $T_{24}'(T_9)$ and $T_{25}'(T_{10})$.

Let $\phi_{36}:T_{19}' \rightarrow T_{22}'$ be the isomorphism defined by \eqref{eq:phi-lamdae}. 
%\begin{subequations}\label{eqn-56}
%  \begin{numcases}{}
%  \alpha\lambda_{22} = \alpha\lambda_{22}\lambda_{22} +2(3\theta-3\alpha) \lambda_{22}\lambda_{23} +(6\alpha-9\theta)\lambda_{23}\lambda_{23}, \label{eqn-56-1} \\
%  \alpha\lambda_{23} = \theta\lambda_{22}\lambda_{22} -2\alpha\lambda_{22} \lambda_{23} +(3\alpha-3\theta)\lambda_{23}\lambda_{23}, \label{eqn-56-2} \\
%  -3\alpha\lambda_{22} -\alpha\lambda_{32} = \alpha\lambda_{22}\lambda_{32} +(3\theta-3\alpha)(2\lambda_{22}\lambda_{33}-1) +(6\alpha-9\theta)\lambda_{23} \lambda_{33}, \label{eqn-56-3}  \\
%  -3\alpha\lambda_{23} -\alpha\lambda_{33} = \theta\lambda_{22}\lambda_{32} -\alpha(2\lambda_{22}\lambda_{33}-1) +(3\alpha-3\theta)\lambda_{23}\lambda_{33}, \label{eqn-56-4} \\
%  6\alpha\lambda_{22} +3\alpha\lambda_{32} = \alpha\lambda_{32}\lambda_{32} +2(3\theta-3\alpha)\lambda_{32}\lambda_{33} +(6\alpha-9\theta)\lambda_{33} \lambda_{33}, \label{eqn-56-5}\\
%  6\alpha\lambda_{23} +3\alpha\lambda_{33} = \theta\lambda_{32}\lambda_{32} -2\alpha\lambda_{32}\lambda_{33} +(3\alpha-3\theta)\lambda_{33}\lambda_{33}, \label{eqn-56-6} \\
%  \alpha\lambda_{11} +\alpha\lambda_{21} = \theta\lambda_{22}\lambda_{22} +(3\theta-3\alpha) \lambda_{23}\lambda_{23}, \label{eqn-56-7} \\
%  -\alpha\lambda_{11} -3\alpha\lambda_{21} -\alpha\lambda_{31} = \theta\lambda_{22}\lambda_{32} +(3\theta-3\alpha) \lambda_{23}\lambda_{33}, \label{eqn-56-8} \\
%  6\alpha\lambda_{21} +3\alpha\lambda_{31} = \theta\lambda_{32}\lambda_{32} +(3\theta-3\alpha) \lambda_{33}\lambda_{33}. \label{eqn-56-9}
%  \end{numcases}
%\end{subequations}
Without loss of generality, let $\alpha=1$ and $\theta=1$. Then by \eqref{eq:y-iso} we have
\begin{subequations}\label{eqn-57}
  \begin{numcases}{}
  \lambda_{22} =\lambda_{22}\lambda_{22} -3\lambda_{23}\lambda_{23}, \label{eqn-57-1}\\
  \lambda_{23} =\lambda_{22}\lambda_{22} -2\lambda_{22}\lambda_{23}, \label{eqn-57-2}\\
  -3\lambda_{22} -\lambda_{32} =\lambda_{22}\lambda_{32} -3\lambda_{23}\lambda_{33}, \label{eqn-57-3} \\
  -3\lambda_{23} -\lambda_{33} =\lambda_{22}\lambda_{32} -(2\lambda_{22}\lambda_{33}-1), \label{eqn-57-4} \\
  6\lambda_{22} +3\lambda_{32} = \lambda_{32}\lambda_{32}-3\lambda_{33} \lambda_{33}, \label{eqn-57-5}\\
  6\lambda_{23} +3\lambda_{33} = \lambda_{32}\lambda_{32} -2\lambda_{32}\lambda_{33}, \label{eqn-57-6} \\
  \lambda_{11} +\lambda_{21} = \lambda_{22}\lambda_{22}, \label{eqn-57-7} \\
  -\lambda_{11}-3\lambda_{21} -\lambda_{31} = \lambda_{22}\lambda_{32}, \label{eqn-57-8} \\
  6\lambda_{21} +3\lambda_{31} = \lambda_{32}\lambda_{32}. \label{eqn-57-9}
  \end{numcases}
\end{subequations}

If $\lambda_{22}=0$, then by \eqref{eqn-57-1} we have $\lambda_{23}=0$, which contradicts $\lambda_{22}\lambda_{33}-\lambda_{23}\lambda_{32}=1$.

If $\lambda_{22}=-\frac{1}{2}$, then by \eqref{eqn-57-1}, we have either $\lambda_{23}=\frac{1}{2}$ or $\lambda_{23}=-\frac{1}{2}$. However, this contradicts \eqref{eqn-57-2}.

If $\lambda_{22}=-1$, then by \eqref{eqn-57-1} and \eqref{eqn-57-2} we have $\lambda_{23}^2=\frac{2}{3}$, $\lambda_{23}=-1$, which are contradictory.

If $\lambda_{22}\neq0,-\frac{1}{2},-1$, then by \eqref{eqn-57-2} and \eqref{eqn-57-1} we have $\lambda_{23}=\frac{\lambda_{22}^2}{2\lambda_{22}+1}$ and $\lambda_{22}^3 -3\lambda_{22}-1=0$. Substituting $\lambda_{22}$ and $\lambda_{23}$ into \eqref{eqn-57-3} and \eqref{eqn-57-4} we have $\lambda_{33}=\frac{\lambda_{22}-\lambda_{22}^2}{2\lambda_{22}+1}$ and $\lambda_{32}=-\frac{21\lambda_{22}^2+33\lambda_{22}+9} {(\lambda_{22}+1)(2\lambda_{22}+1)}$. Adding \eqref{eqn-57-7} and \eqref{eqn-57-8} we have $-2\lambda_{21}-\lambda_{31}=\lambda_{22}^2+\lambda_{22}\lambda_{32}$, then multiplying the result by $3$ and adding \eqref{eqn-57-9} we have $3\lambda_{22}^2+3\lambda_{22}\lambda_{32}+\lambda_{32}^2=0$. By substituting $\lambda_{32}=-\frac{21\lambda_{22}^2+33\lambda_{22}+9} {(\lambda_{22}+1)(2\lambda_{22}+1)}$ into it, we obtain $\frac{561\lambda_{22}^2+1050\lambda_{22}+297}{561\lambda_{22}^2 +1050\lambda_{22}+297}=0$, which contradicts $\lambda_{22}^3 -3\lambda_{22}-1=0$. Hence $T_{19}'\ncong T_{22}'$.

Let $\phi_{37}:T_{19}' \rightarrow T_{23}'$ be the isomorphism defined by \eqref{eq:phi-lamdae}. 
Without loss of generality, let $\alpha=1$ and $\theta=1$. Then by \eqref{eq:y-iso} we have
\begin{subequations}\label{eqn-59}
  \begin{numcases}{}
  \lambda_{22} =\lambda_{22}\lambda_{22} -3\lambda_{23}\lambda_{23}, \label{eqn-59-1}\\
  \lambda_{23} =\lambda_{22}\lambda_{22} -2\lambda_{22}\lambda_{23}, \label{eqn-59-2}\\
  -3\lambda_{22} -\lambda_{32} =\lambda_{22}\lambda_{32} -3\lambda_{23}\lambda_{33}, \label{eqn-59-3} \\
  -3\lambda_{23} -\lambda_{33} =\lambda_{22}\lambda_{32} -(2\lambda_{22}\lambda_{33}-1), \label{eqn-59-4} \\
  6\lambda_{22} +3\lambda_{32} = \lambda_{32}\lambda_{32}-3\lambda_{33} \lambda_{33}, \label{eqn-59-5}\\
  6\lambda_{23} +3\lambda_{33} = \lambda_{32}\lambda_{32} -2\lambda_{32}\lambda_{33}, \label{eqn-59-6} \\
  \lambda_{11} +\lambda_{21} = \lambda_{22}\lambda_{23}, \label{eqn-59-7} \\
  -\alpha\lambda_{11} -3\alpha\lambda_{21} -\alpha\lambda_{31} = \frac{1}{2} (2\lambda_{22}\lambda_{33}-1), \label{eqn-59-8} \\
  6\alpha\lambda_{21} +3\alpha\lambda_{31} = \lambda_{32}\lambda_{33}. \label{eqn-59-9}
  \end{numcases}
\end{subequations}
Then \eqref{eqn-59-1}$\sim$\eqref{eqn-59-6} are the same as the \eqref{eqn-57-1}$\sim$\eqref{eqn-57-6}. By $\phi_{36}$ we have $\lambda_{23}=\frac{\lambda_{22}^2}{2\lambda_{22}+1}(\lambda_{22}\neq\frac{1}{2})$ and $\lambda_{22}^3 -3\lambda_{22}-1=0$. Substituting $\lambda_{22}$ and $\lambda_{23}$ into \eqref{eqn-57-3} and \eqref{eqn-57-4} we have $\lambda_{33}=\frac{\lambda_{22}-\lambda_{22}^2}{2\lambda_{22}+1}$, $\lambda_{32}=-\frac{21\lambda_{22}^2+33\lambda_{22}+9} {(\lambda_{22}+1)(2\lambda_{22}+1)}(\lambda_{22}\neq-1)$. Adding \eqref{eqn-57-7} and \eqref{eqn-57-8} we have $-2\lambda_{21}-\lambda_{31}=\lambda_{22}\lambda_{23}+\lambda_{22}\lambda_{33} -\frac{1}{2}$, then multiplying the result by $3$ and adding \eqref{eqn-57-9} we have $3\lambda_{22}\lambda_{23}+3\lambda_{22}\lambda_{33} -\frac{3}{2} +\lambda_{32}\lambda_{33}=0$. Taking $\lambda_{23}=\frac{\lambda_{22}^2}{2\lambda_{22}+1}$, $\lambda_{32}=-\frac{21\lambda_{22}^2+33\lambda_{22}+9} {(\lambda_{22}+1)(2\lambda_{22}+1)}$ and $\lambda_{33}=\frac{\lambda_{22}-\lambda_{22}^2}{2\lambda_{22}+1}$ into it we have $\frac{126\lambda_{22}^2+225\lambda_{22}+63}{84\lambda_{22}^2 +150\lambda_{22}+42}=0$  contradicts  with $\lambda_{22}^3 -3\lambda_{22}-1=0$. Hence $T_{19}'\ncong T_{23}'$.

Let $\phi_{38}:T_{19}' \rightarrow T_{24}'$ be the isomorphism defined by \eqref{eq:phi-lamdae}. 
Without loss of generality, let $\alpha=1$, $\theta=1$ and $\rho=1$. Then by \eqref{eq:y-iso} we have
\begin{subequations}\label{eqn-61}
  \begin{numcases}{}
  \lambda_{22} =\lambda_{22}\lambda_{22} -3\lambda_{23}\lambda_{23}, \label{eqn-61-1}\\
  \lambda_{23} =\lambda_{22}\lambda_{22} -2\lambda_{22}\lambda_{23}, \label{eqn-61-2}\\
  -3\lambda_{22} -\lambda_{32} =\lambda_{22}\lambda_{32} -3\lambda_{23}\lambda_{33}, \label{eqn-61-3} \\
  -3\lambda_{23} -\lambda_{33} =\lambda_{22}\lambda_{32} -(2\lambda_{22}\lambda_{33}-1), \label{eqn-61-4} \\
  6\lambda_{22} +3\lambda_{32} = \lambda_{32}\lambda_{32}-3\lambda_{33} \lambda_{33}, \label{eqn-61-5}\\
  6\lambda_{23} +3\lambda_{33} = \lambda_{32}\lambda_{32} -2\lambda_{32}\lambda_{33}, \label{eqn-61-6} \\
  \lambda_{11} +\lambda_{21} = \lambda_{22}\lambda_{22} -3\lambda_{22}\lambda_{23} +3\lambda_{23}\lambda_{23}, \label{eqn-61-7} \\
  -\lambda_{11} -3\lambda_{21} -\lambda_{31} = \lambda_{22}\lambda_{32} -\frac{3}{2}(2\lambda_{22}\lambda_{33}-1) +3\lambda_{23}\lambda_{33}, \label{eqn-61-8} \\
  6\lambda_{21} +3\lambda_{31} = \lambda_{32}\lambda_{32} -3\lambda_{32}\lambda_{33} +3\lambda_{33}\lambda_{33}. \label{eqn-61-9}
  \end{numcases}
\end{subequations}
Then  \eqref{eqn-61-1}$\sim$\eqref{eqn-61-6} are the same as \eqref{eqn-57-1}$\sim$\eqref{eqn-57-6}. Then by $\phi_{36}$ we have $\lambda_{23}=\frac{\lambda_{22}^2}{2\lambda_{22}+1}(\lambda_{22}\neq\frac{1}{2})$, $\lambda_{22}^3 -3\lambda_{22}-1=0$, $\lambda_{33}=\frac{\lambda_{22}-\lambda_{22}^2}{2\lambda_{22}+1}$ and $\lambda_{32}=-\frac{21\lambda_{22}^2+33\lambda_{22}+9} {(\lambda_{22}+1)(2\lambda_{22}+1)}(\lambda_{22}\neq-1)$. Multiplying \eqref{eqn-61-7} and \eqref{eqn-61-8} by $3$ and then adding them to \eqref{eqn-61-9} we have $3(\lambda_{22}^2-3\lambda_{22}\lambda_{23}+3\lambda_{23}^2+\lambda_{22}\lambda_{32} -3\lambda_{22}\lambda_{33}+\frac{3}{2}+3\lambda_{23}\lambda_{33}) +\lambda_{32}^2-3\lambda_{32}\lambda_{33}+3\lambda_{33}^2=0$. Substituting $\lambda_{23}$, $\lambda_{32}$ and $\lambda_{33}$ into it we have $3(1-\frac{3}{2})+3=0$, which is contradictory. Hence $T_{19}'\ncong T_{24}'$.

Let $\phi_{39}:T_{19}' \rightarrow T_{25}'$ be the isomorphism defined by \eqref{eq:phi-lamdae}. Then by \eqref{eq:y-iso} we get the same equations as in \eqref{eqn-25-1}$\sim$\eqref{eqn-25-6}. Then by $\phi_{32}$ we have $T_{19}'\ncong T_{25}'$.

\textbf{(7)}
$T_{22}'(T_7)$ does not isomorphic to $T_{23}'(T_8)$, $T_{24}'(T_9)$ and $T_{25}'(T_{10})$.

Let $\phi_{40}:T_{22}' \rightarrow T_{23}'$ and $\phi_{41}:T_{22}' \rightarrow T_{24}'$ be the isomorphisms defined by \eqref{eq:phi-lamdae}. Then by \eqref{eq:y-iso} we get the same equations as in \eqref{eqn-36-1}$\sim$\eqref{eqn-36-6}. Then by $\phi_{33}$ we have $T_{22}'\ncong T_{23}'$, $T_{22}'\ncong T_{24}'$.

Let $\phi_{42}:T_{22}' \rightarrow T_{25}'$ be the isomorphism defined by \eqref{eq:phi-lamdae}. Then by \eqref{eq:y-iso} we have
\begin{subequations}\label{eqn-26}
  \begin{numcases}{}
  \alpha\lambda_{22} +\theta\lambda_{32} = 0, \label{eqn-26-1} \\
  \alpha\lambda_{23} +\theta\lambda_{33} = 0, \label{eqn-26-2} \\
  (3\theta-3\alpha)\lambda_{22} -\alpha\lambda_{32} = 0, \label{eqn-26-3} \\
  (3\theta-3\alpha)\lambda_{23} -\alpha\lambda_{33} = 0, \label{eqn-26-4} \\
  (6\alpha-9\theta)\lambda_{22} +(3\alpha-3\theta)\lambda_{32} = 0, \label{eqn-26-5} \\
  (6\alpha-9\theta)\lambda_{23} +(3\alpha-3\theta)\lambda_{33} = 0, \label{eqn-26-6} \\
  \theta\lambda_{11} +\alpha\lambda_{21} +\theta\lambda_{31} = \alpha(\lambda_{22}\lambda_{22} -3\lambda_{23}\lambda_{23}), \notag \\
  (3\theta-3\alpha)\lambda_{21} -\alpha\lambda_{31} = \alpha(\lambda_{22}\lambda_{32} -3\lambda_{23}\lambda_{33}), \notag \\
  (3\theta-3\alpha)\lambda_{11} +(6\alpha-9\theta)\lambda_{21} +(3\alpha-3\theta)\lambda_{31} = \alpha(\lambda_{32}\lambda_{32} -3\lambda_{33} \lambda_{33}). \notag
  \end{numcases}
\end{subequations}
By \eqref{eqn-26-1} and \eqref{eqn-26-3}, we have $(3\theta-3\alpha+\frac{\alpha^2}{\theta}) \lambda_{22}=0$. By \eqref{eqn-26-2} and \eqref{eqn-26-4}, we have $(3\theta-3\alpha+\frac{\alpha^2}{\theta}) \lambda_{23}=0$. Without loss of generality, let $3\theta-3\alpha+\frac{\alpha^2}{\theta}\neq 0$. Then we have $\lambda_{22}=0$ and $\lambda_{23}=0$. Substituting $\lambda_{22}$ and $\lambda_{23}$ into \eqref{eqn-26-1} and \eqref{eqn-26-2} we have $\lambda_{32}=0$ and $\lambda_{33}=0$, which contradicts the equation $\lambda_{22}\lambda_{33}-\lambda_{23}\lambda_{32}=1$. Hence $T_{22}'\ncong T_{25}'$.

\textbf{(8)}
$T_{23}'(T_8)$ does not isomorphic to $T_{24}'(T_9)$ and $T_{25}'(T_{10})$. $T_{24}'(T_9)$ does not isomorphic to $T_{25}'(T_{10})$.

Let $\phi_{43}:T_{23}' \rightarrow T_{24}'$ be the isomorphisms defined by \eqref{eq:phi-lamdae}. Then by \eqref{eq:y-iso} we get the same equations as in \eqref{eqn-36-1}$\sim$\eqref{eqn-36-6}. Then by $\phi_{33}$ we have $T_{23}'\ncong T_{24}'$.

Let $\phi_{44}:T_{23}' \rightarrow T_{25}'$ and $\phi_{45}:T_{24}' \rightarrow T_{25}'$ be the isomorphism defined by \eqref{eq:phi-lamdae}. Then by \eqref{eq:y-iso} we get the same equations as in \eqref{eqn-26-1}$\sim$\eqref{eqn-26-6}. Then by $\phi_{42}$ we have $T_{23}'\ncong T_{25}'$, $T_{24}'\ncong T_{25}'$.

In conclusion, $T_1\ncong T_{7}$, $T_1\ncong T_{8}$, $T_1\ncong T_{9}$, $T_2\ncong T_{7}$, $T_2\ncong T_{8}$, $T_2\ncong T_{9}$, $T_3\ncong T_{7}$, $T_3\ncong T_{8}$, $T_3\ncong T_{9}$, $T_4\ncong T_{7}$, $T_4\ncong T_{8}$, $T_4\ncong T_{9}$, $T_6\ncong T_{7}$, $T_6\ncong T_{8}$, $T_6\ncong T_{9}$, $T_5\ncong T_{7}$, $T_5\ncong T_{8}$, $T_5\ncong T_{9}$, $T_7\ncong T_{8}$, $T_7\ncong T_{9}$, $T_8\ncong T_{9}$, $T_7\ncong T_{10}$, $T_8\ncong T_{10}$, $T_9\ncong T_{10}$.

\end{document}